\def\vyt{{\tilde{\bm{y}}}}
\definecolor{mypurple}{rgb}{0.67, 0.12, 0.47}
\newtheorem{lemma}{Lemma}
\newtheorem{assumption}{Assumption}
\newcommand{\innp}[1]{\left\langle #1 \right\rangle}
\newcommand{\dom}{\mathrm{dom}}
\newcommand{\dd}{\mathrm{d}}
\newcommand{\subalign}[1]{%
  \vcenter{%
    \Let@ \restore@math@cr \default@tag
    \baselineskip\fontdimen10 \scriptfont\tw@
    \advance\baselineskip\fontdimen12 \scriptfont\tw@
    \lineskip\thr@@\fontdimen8 \scriptfont\thr@@
    \lineskiplimit\lineskip
    \ialign{\hfil$\m@th\scriptstyle##$&$\m@th\scriptstyle{}##$\hfil\crcr
      #1\crcr
    }%
  }%
}
\newcommand{\norm}[1]{\left\lVert#1\right\rVert}  
\newcommand{\inner}[2]{\left\langle#1,#2\right\rangle}  
\newcommand{\set}[1]{\left\{#1\right\}}  
\newcommand{\pr}[1]{\left(#1\right)}  
\newcommand{\br}[1]{\left[#1\right]}  
\def\eqref#1{equation~\ref{#1}}
\def\1{\bm{1}}
\def\rr{{\textnormal{r}}}
\def\vzero{{\bm{0}}}
\def\vmu{{\bm{\mu}}}
\def\vp{{\bm{p}}}
\def\vq{{\bm{q}}}
\def\vu{{\bm{u}}}
\def\vv{{\bm{v}}}
\def\vw{{\bm{w}}}
\def\vx{{\bm{x}}}
\def\vy{{\bm{y}}}
\def\vz{{\bm{z}}}
\def\mI{{\bm{I}}}
\def\mQ{{\bm{Q}}}
\DeclareMathAlphabet{\mathsfit}{\encodingdefault}{\sfdefault}{m}{sl}
\SetMathAlphabet{\mathsfit}{bold}{\encodingdefault}{\sfdefault}{bx}{n}
\def\gF{{\mathcal{F}}}
\def\gS{{\mathcal{S}}}
\def\sR{{\mathbb{R}}}
\newcommand{\E}{\mathbb{E}}
\DeclareMathOperator*{\argmin}{arg\,min}
\title{Accelerated Cyclic Coordinate Dual Averaging with Extrapolation for Composite Convex Optimization}
\author{
    Cheuk Yin Lin \thanks{Department of Computer Sciences, University of Wisconsin-Madison. E-mail:
    \href{mailto:cylin@cs.wisc.edu}{\texttt{cylin@cs.wisc.edu}}, 
    \href{mailto:chaobing.song@wisc.edu}{\texttt{chaobing.song@wisc.edu}}, 
    \href{mailto:jelena@cs.wisc.edu}{\texttt{jelena@cs.wisc.edu}}}
    \and Chaobing Song \footnotemark[1]
    \and Jelena Diakonikolas \footnotemark[1]
}
\begin{document}

\maketitle

\begin{abstract}
Exploiting partial first-order information in a cyclic way is arguably the most natural strategy to obtain scalable first-order methods. However, despite their wide use in practice, cyclic schemes are far less understood from a theoretical perspective than their randomized counterparts. Motivated by a recent success in analyzing an extrapolated cyclic scheme for generalized variational inequalities, we propose an \emph{Accelerated Cyclic Coordinate Dual Averaging with Extrapolation} (A-CODER) method for composite convex optimization, where the objective function can be expressed as the sum of a smooth convex function accessible via a gradient oracle and a convex, possibly nonsmooth, function accessible via a proximal oracle. We show that A-CODER attains the optimal convergence rate with improved dependence on the number of blocks compared to prior work. Furthermore, for the setting where the smooth component of the objective function is expressible in a finite sum form, we introduce a variance-reduced variant of A-CODER, VR-A-CODER, with state-of-the-art complexity guarantees. Finally, we demonstrate the effectiveness of our algorithms through numerical experiments.
\end{abstract}

\section{Introduction}\label{sec:intro}
Block coordinate descent methods are broadly used in machine learning due to their effectiveness on large datasets brought by cheap iterations requiring only partial access to problem information~\cite{wright2015coordinate,nesterov2012efficiency}. They are frequently applied to problems such as feature selection~\cite{wu2008coordinate,friedman2010regularization, mazumder2011sparsenet}, empirical risk minimization~\cite{nesterov2012efficiency,zhang2015stochastic, lin2015accelerated,allen2016even,alacaoglu2017smooth,gurbuzbalaban2017cyclic,diakonikolas2018alternating}, and in distributed computing \cite{liu2014asynchronous, fercoq2015accelerated,richtarik2016parallel}. In the more recent literature, coordinate updates on either the primal or the dual side in primal-dual settings have been used to attain variance-reduced guarantees in finite sum settings~\cite{chambolle2018stochastic,alacaoglu2017smooth,alacaoglu2020random,song2020variance,song2021coordinate}.  

Most of the existing theoretical results for (block) coordinate-type methods have been established for algorithms that select coordinate blocks to be updated by random sampling  without replacement~\cite{nesterov2012efficiency,wright2015coordinate,chambolle2018stochastic,alacaoglu2017smooth,alacaoglu2020random,song2020variance,song2021coordinate, zhang2015stochastic, lin2015accelerated,allen2016even,diakonikolas2018alternating}. Such methods are commonly referred to as the randomized block coordinate methods (RBCMs). What makes these methods particularly appealing from the aspect of convergence analysis is that the gradient evaluated on the sampled coordinate block can be related to the full gradient, by taking the expectation over the random choice of a coordinate block. 

An alternative class of block coordinate methods is the class of cyclic block coordinate methods (CBCMs), which update blocks of coordinates in a cyclic order. CBCMs are frequently used in practice due to often superior empirical performance compared to RBCMs~\cite{beck2013convergence,chow2017cyclic,sun2019worst} and are also  part of standard software packages for high-dimensional computational statistics such as  GLMNet \cite{friedman2010regularization} and SparseNet \cite{mazumder2011sparsenet}. However, CBCMs have traditionally been considered much more challenging to analyze than RBCMs. 

The first convergence rate analysis of CBCMs for smooth convex optimization problems, obtained by \citet{beck2013convergence}, relied on relating the partial coordinate blocks of the gradient to the full gradient. For this reason, the dependence of iteration complexity on the number of coordinate blocks in  \citet{beck2013convergence} scaled linearly and as a square root for vanilla CBCM and its accelerated variant, respectively. Such a high dependence on the number of blocks (equal to the dimension in the coordinate case) makes the complexity guarantee of CBCMs seem worse than not only RBCMs but even full gradient methods such as gradient descent and the fast gradient method of~\citet{nesterov1983method}, bringing into question their usefulness. This is further exacerbated by a result that shows that such a high gap in complexity does happen in the worst case~\cite{sun2019worst}, prompting research that would explain the gap between the theory and practice of CBCMs. However, most of the results that improved the dependence on the number of blocks only did so for structured classes of convex quadratic problems~\cite{wright2020analyzing,lee2019random,gurbuzbalaban2017cyclic}.   

On the other hand, a very recent work in~\citet{song2021cyclic} introduced an extrapolated CBCM for variational inequalities whose complexity guarantee does not involve explicit dependence on the number of blocks. This result is enabled by a novel Lipschitz condition introduced in the same work. While the result from~\citet{song2021cyclic} applies to convex minimization settings as a special case, the obtained convergence rates are not accelerated. Our main motivation in this work is to close this convergence gap by providing accelerated extrapolated CBCMs for convex composite minimization. 


\subsection{Contributions}

We study the following composite convex problem
\begin{align}\label{eq:main-problem}\tag{P}
\min_{\vx\in\sR^d}\Big\{\bar{f}(\vx) = f(\vx) + g(\vx)\Big\},
\end{align}
where $f$ is smooth and convex and $g$ is proper, (possibly strongly) convex, and lower semicontinuous. This is a standard and broadly studied setting of structured nonsmooth optimization; see, e.g,~\citet{beck2009fast,nesterov2007gradient} and the follow-up work. To further make the problem amenable to optimization via block coordinate methods, we assume that $g$ is block separable, with each component function admitting an efficiently computable prox operator (see Section~\ref{sec:prelims} for a precise statement of the assumptions).

Similar to \citet{song2021cyclic}, we define a summary Lipschitz constant $L$ of $f$ obtained from Lipschitz conditions of individual blocks. Our summary Lipschitz condition is similar to that of \citet{song2021cyclic} (although not exactly the same) and enjoys the same favorable properties as the condition introduced in that paper; see Section~\ref{sec:prelims} for more details.

We introduce a new accelerated cyclic algorithm for \eqref{eq:main-problem} whose full gradient oracle complexity (number of full gradient passes or, equivalently, number of full cycles) is of the order $O\Big(\min\Big\{\sqrt{\frac{L}{\epsilon}}\|\vx_0 - \vx^*\|_2, \; \sqrt{\frac{L}{\gamma}}\log(\frac{L\|\vx_0 - \vx^*\|_2}{\epsilon}\Big\}\Big),$ where $\gamma$ is the strong convexity parameter of $g$ (equal to zero if $g$ is only convex, by convention), $\vx^*$ is an optimal solution to \eqref{eq:main-problem}, and $\vx_0 \in \dom(g)$ is an arbitrary initial point. This complexity result matches the gradient oracle complexity of the fast gradient method~\cite{nesterov1983method}, but with the traditional Lipschitz constant being replaced by the Lipschitz constant introduced in our work. In the very worst case, this constant is no higher than $\sqrt{m}$ times the traditional one, where $m$ is the number of blocks, giving an $m^{1/4}$ improvement in the resulting complexity over the accelerated cyclic method from \citet{beck2013convergence}. Even in this worst case, the obtained improvement in the dependence on the number of blocks is the first such improvement for accelerated methods since the work of \citet{beck2013convergence}. We note, however, that for both synthetic data and real data sets and on an example
problem where both Lipschitz constants are explicitly computable, our Lipschitz constant is within a small constant factor (smaller than 1.5) of the traditional one (see Figure~\ref{fig:L-compare}, Table~\ref{table:L-compare}, and the related discussion in Section~\ref{sec:prelims}).  

Some key ingredients in our analysis are the following. First, we construct an estimate of the optimality gap we want to bound, where we replace the gradient terms with a vector composed of partial, or block, extrapolated gradient terms evaluated at intermediate points within a cycle. Crucially, we show that the error introduced by doing so can be controlled and bounded via our Lipschitz condition. An auxiliary result allowing us to carry out the analysis and appropriately bound the error terms resulting from our approach is Lemma~\ref{lemma:Lip-quad-ub}, which shows that our Lipschitz condition translates into inequalities of the form     
\begin{align*}
    f(\vy) - f(\vx) \leq\;& \innp{\nabla f(\vx), \vy - \vx} + \frac{L}{2} \|\vy - \vx\|^2, \\ 
    \|\nabla f(\vy) - \nabla f(\vx)\|^2 \le\;& 2 L (f(\vy) - f(\vx) - \langle\nabla f(\vx), \vy - \vx\rangle),
\end{align*}
similar to the standard inequalities that hold for the traditional, full-gradient, Lipschitz constant. Finally, we note that the accelerated algorithm that we introduce is novel even in the single block (i.e., full-gradient) setting, due to the employed gradient extrapolation.

We further consider the finite sum setting, where $f$ is expressible as $f(\vx) = \frac{1}{n} \sum_{t=1}^n f_t(\vx),$ and where $n$ is typically very large. We then propose a variance-reduced variant of our accelerated method, which further reduces the full gradient oracle complexity to $O\Big(\min\Big\{\sqrt{\frac{L}{n \epsilon}}\|\vx_0 - \vx^*\|_2, \; \sqrt{\frac{L}{n \gamma}}\log(\frac{L\|\vx_0 - \vx^*\|_2}{\epsilon}\Big\}\Big)$.  
The variance reduction that we employ is of the SVRG type~\cite{johnson2013accelerating}. While following a similar approach as the basic accelerated algorithm described above, the analysis in this case turns out to be much more technical, due to the need to simultaneously handle error terms arising from variance reduction as well as the error terms arising from the cyclic updates.
{ Through utilizing the novel smoothness properties obtained in Lemma~\ref{lemma:Lip-quad-ub} specific to convex minimization, we are able to obtain the desired error bounds without using the additional point extrapolation step in the gradient estimator as~\citet{song2021cyclic}, but rather only with an SVRG estimator. This important change paves a path to achieving accelerated convergence rates while also simplifying the implementation of our algorithms.}

{ Last but not least, we demonstrate the practical efficacy of our novel accelerated algorithms A-CODER and VR-A-CODER through numerical experiments, comparing against other relevant block coordinate descent methods. The use of A-CODER and VR-A-CODER achieves faster convergence in primal gap with respect to both the number of full-gradient evaluations and wall-clock time.}

\subsection{Further Discussion of Related Work}

As discussed at the beginning of this section, cyclic block coordinate methods constitute a fundamental class of optimization methods whose convergence is not yet well understood. In the worst case, the full gradient oracle complexity of vanilla cyclic block coordinate gradient update is worse than that of vanilla gradient descent, by a factor scaling with the number of blocks $m$ (equal to the dimension in the coordinate case)~\cite{sun2019worst,beck2013convergence}. Since the initial results providing such an upper bound~\cite{sun2019worst}, there were no improvements on the dependence on the dependence on the number of blocks in the convergence guarantees of cyclic methods until the very recent work of \citet{song2021cyclic}, which in the worst case improves the dependence on $m$ by a factor $\sqrt{m}.$ Our work further contributes to this line of work by improving the dependence on $m$ in accelerated methods from $\sqrt{m}$ to $m^{1/4}$ in the worst case.  

In the finite-sum settings, variance reduction has been widely explored; e.g., in~\citet{johnson2013accelerating,defazio2014saga,allen2017katyusha,reddi2016stochastic,lei2017non,song2020variance,schmidt2017minimizing} for the case of full-gradient methods and in~\citet{Chen2016AcceleratedSB, Lei2018AsynchronousVB} for randomized block coordinate methods. However, variance reduced schemes for cyclic methods are much more rare, with nonasymptotic guarantees being obtained very recently for the case of variational inequalities~\cite{song2021cyclic} and nonconvex optimization~\cite{Cai2022CyclicBC, Xu2014BlockSG}. We are not aware of any existing variance reduced results for accelerated cyclic block coordinate methods. 

\subsection{Outline of the Paper}

Section~\ref{sec:prelims} introduces the necessary notation and background and outlines our main problem assumptions. Section~\ref{sec:acoder} introduces the A-CODER algorithm and outlines the analysis. For space constraints, the full convergence analysis of A-CODER is provided in Appendix~\ref{appx:omitted-proofs-acoder}. Section~\ref{sec:vr-acoder} presents VR-A-CODER and outlines its convergence analysis, while the full technical details are deferred to Appendix~\ref{appx:omitted-proofs-acoder-vr}. Finally, Section~\ref{sec:num-exp}  provides numerical experiments for our results and concludes the paper with a discussion. 

\section{Notation and Preliminaries}\label{sec:prelims}
For a positive integer $K,$ we use $[K]$ to denote the set $\{1,2,\ldots, K\}.$ We consider the $d$-dimensional Euclidean space $(\sR^d, \|\cdot\|),$ where $\|\cdot\| = \sqrt{\innp{\cdot, \cdot}}$ denotes the Euclidean norm, $\innp{\cdot, \cdot}$ denotes the (standard) inner product, and $d$ is assumed to be finite. 
Throughout the paper, we assume that there is a given partition of the set $\{1, 2, \dots, d\}$ into sets $\gS^j$, $j \in \{1, \dots, m\},$ where $|\gS^j| = d^j > 0.$ For convenience of notation, we assume that sets $\gS^j$ are comprised of consecutive elements from $\{1, 2, \dots, d\}$, that is,  $\gS^1 = \{1, 2, \dots, d^1\},$ $\gS^2 = \{d^1 + 1, d^1 + 2, \dots, d^1 + d^2\},\dots, \gS^m = \{\sum_{j=1}^{m-1}d^j + 1, \sum_{j=1}^{m-1}d^j + 2, \dots, \sum_{j=1}^{m}d^j\}$. This assumption is without loss of generality, as all our results are invariant to permutations of the coordinates (though the value of the Lipschitz constant of the gradients defined in our work depends on the ordering of the coordinates; see Assumption~\ref{assmpt:Lip-const}). For a vector $\vx \in \rr^d$, we use $\vx^{(j)}$ to denote its coordinate components indexed by $\gS^j$. Similarly for a gradient $\nabla f$ of a function $f:\sR^d\rightarrow\sR$, we use $\nabla^{(j)} f$ to denote its coordinate components indexed by $\gS^j.$
We use $(\;\cdot\;)_{\ge j}$ to denote an operator for vectors and square matrices that \emph{replaces the first $j-1$ elements of rows and columns with zeros}, i.e., keeping elements with indices $\ge j$ the same, otherwise zeros.



Given a proper, convex, lower semicontinuous function $g: \sR^d \to \sR \cup \{+\infty\},$ we use $\partial g(\vx)$ to denote the subdifferential set (the set of all subgradients) of $g$. Of particular interests to us are functions $g$ whose proximal operator (or resolvent), defined by
\begin{equation}\label{eq:prox-op}
    \mathrm{prox}_{\tau g}(\vu) := \argmin_{\vx \in \sR^d}\Big\{\tau g(\vx) + \frac{1}{2 }\|\vx - \vu\|^2\Big\}
\end{equation}
is efficiently computable for all $\tau > 0$ and $\vu \in \sR^d.$ 
To unify the cases in which $g$ are convex and strongly convex respectively, we say that $g$ is $\gamma$-strongly convex with modulus $\gamma \geq 0,$ if for all $\vx, \vy \in \sR^d$ and $g'(\vx)\in \partial g(\vx)$, 
\begin{equation*}
    g(\vy) \geq g(\vx) +  \innp{g'(\vx), \vy - \vx} + \frac{\gamma}{2}\|\vy - \vx\|^2. 
\end{equation*}

\paragraph{Problem definition.} 
We consider Problem~\eqref{eq:main-problem}, 
under the following assumptions.



\begin{assumption}\label{assmpt:strongly-convex}
$g(\vx)$ is $\gamma$-strongly convex, where $\gamma\ge 0$, and block-separable over coordinate sets $\{\gS^j\}_{j=1}^m:$ $g(\vx) = \sum_{j=1}^m g^j(\vx^{(j)})$.
Each $g^j(\vx^{(j)})$ for $j \in [m]$ admits an efficiently computable proximal operator. 
\end{assumption} 

\begin{assumption}\label{assmpt:Lip-const} 
There exist positive semidefinite matrices $\{\mQ^1,\mQ^2,\ldots, \mQ^m\}$ such that $\nabla^{(j)} f(\cdot)$  is $1$-Lipschitz continuous w.r.t.~the seminorm $\|\cdot\|_{\mQ^j},$ i.e., $\forall \vx, \vy \in \sR^d,$
\begin{equation}\label{eq:block-Lipschitz}
\|\nabla^{(j)} f(\vx) - \nabla^{(j)} f(\vy)\|^2 \le \|\vx - \vy\|_{\mQ^j}^2,
\end{equation}
where $\|\vx - \vy\|_{\mQ^j}^2 := (\vx-\vy)^T\mQ^j(\vx-\vy)$ is the Mahalanobis (semi)norm. Moreover, we define a new Lipschitz constant $L$ such that $L^2 = 2 \|\tilde{\mQ}\| \; < \infty$ where $\tilde{\mQ} = \sum_{j=1}^m \left[ (\mQ^j)_{\ge j} + (\mQ^j)_{\ge j+1} \right]$.
\end{assumption}

Observe that when $f$ is $M$-smooth in a traditional sense (i.e., when $f$ has $M$-Lipschitz gradients w.r.t.~the Euclidean norm), Assumption~\ref{assmpt:Lip-const} can be trivially satisfied using $\mQ^j = M \mI$ for all $j \in [m],$ where $\mI$ is the identity matrix. Consequently, it can be argued that $L \leq 2\sqrt{m}M$~\citep{song2021cyclic}; however, we show that this bound is much tighter in practice as illustrated in Figure~\ref{fig:L-compare} and in Table~\ref{table:L-compare}. In particular, we follow the experiments in~\citet{song2021cyclic} and show empirically that the standard Lipschitz constant $M$ and our new Lipschitz constant $L$ scale within the same factor for both synthetic and real data.

\begin{figure}[t!]
    \centering
    {\includegraphics[width=0.48\textwidth]{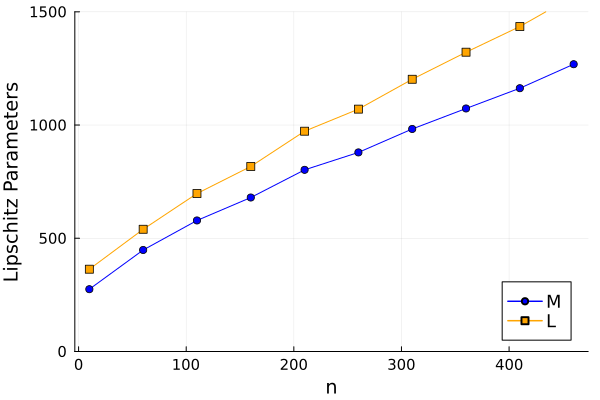}\label{fig:L-compare-time-n}} 
    {\includegraphics[width=0.48\textwidth]{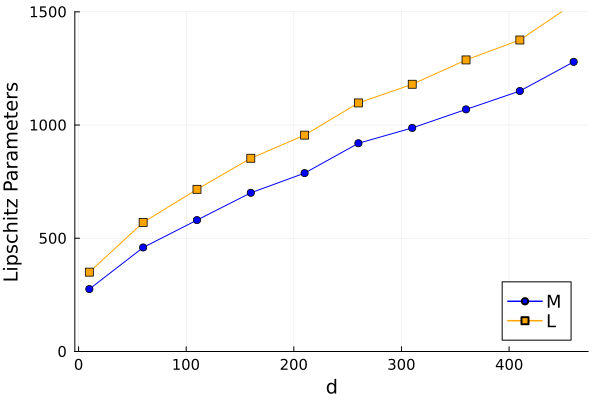}\label{fig:L-compare-time-d}}
    \caption{Comparisons of Lipschitz constants for elastic-net problems on synthetic datasets, where $M$ denotes the commonly known Lipschitz constant and $L$ is our new Lipschitz constant as defined in Assumption~\ref{assmpt:Lip-const}.}
    \label{fig:L-compare}
\end{figure}

\begin{table}[t!]
\caption{Comparisons of Lipschitz constants for elastic-net problems on LibSVM datasets. $M$ is the classical gradient  Lipschitz  constant and $L$ is our novel smoothness constant. We use each coordinate as a block, i.e., $m = d$.}
\label{table:L-compare}
\vskip 0.15in
\begin{center}
\begin{small}
\begin{sc}
\begin{tabular}{lccc}
\toprule
Dataset & \#Features & $M$ & $L$ \\
\midrule
sonar & 60 & 12.5 & 15.8 \\
colon & 2000 & 310.6 & 394.7 \\
a9a & 123 & 6.1 & 7.7 \\
phishing & 68 & 0.60 & 0.76 \\ 
madelon & 500 & 1.2 & 1.5 \\
\bottomrule
\end{tabular}
\end{sc}
\end{small}
\end{center}
\vskip -0.1in
\end{table}

\section{Accelerated Cyclic Algorithm}\label{sec:acoder}

In this section, we introduce and analyze A-CODER, whose pseudocode is provided in Algorithm~\ref{alg:acc-coder}. A-CODER can be seen as a Nesterov-style accelerated variant of CODER, previously introduced for solving variational inequalities by~\citet{song2021cyclic}. A-CODER is related to other accelerated algorithms in the following sense. In the case of a single block ($m = 1$) and when gradient extrapolation is not used (i.e., when $\vq_k = \vp_k$), A-CODER reduces to a generalized variant of AGD+ \citep{cohen2018acceleration,diakonikolas2021complementary} or the method of similar triangles~\citep{gasnikov2018universal}. 
The analysis of A-CODER follows the general gap bounding argument~\citep{diakonikolas2019approximate,song2021unified} and it is based on three key ingredients: (i) gradient extrapolation, which enables the use of partial information about the gradients within a full epoch of cyclic updates, (ii) Lipschitz condition for the gradients based on the Mahalanobis norm as defined in Assumption~\ref{assmpt:Lip-const}, and (iii) upper and lower bounds on the difference between the function and its linear approximation that are compatible with the gradient Lipschitz condition that we use, as stated in Lemma~\ref{lemma:Lip-quad-ub}.

\begin{restatable}{lemma}{lemmaLipquadub} \label{lemma:Lip-quad-ub}
Let $f: \sR^d \rightarrow \sR$ be a convex and smooth function whose gradients satisfy Assumption~\ref{assmpt:Lip-const}. Then,  $\forall \vx, \vy \in \sR^d:$
\begin{align*}
    f(\vy) - f(\vx) \leq\;& \innp{\nabla f(\vx), \vy - \vx} + \frac{L}{2} \|\vy - \vx\|^2, \\ 
    \|\nabla f(\vy) - \nabla f(\vx)\|^2 \le\;& 2 L (f(\vy) - f(\vx) - \langle\nabla f(\vx), \vy - \vx\rangle).
\end{align*}
\end{restatable}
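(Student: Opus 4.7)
The plan is to prove the two inequalities in sequence: the descent lemma via a blockwise telescoping along a carefully chosen interpolation path, and the co-coercivity bound via a classical reduction from the descent lemma. The design of the interpolation path is the crucial ingredient that makes the matrix $\tilde{\mQ}$ appear with the exact coefficient $1/2$.

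For the first inequality, I would introduce intermediate points $\vz_j := \vx + (\vy - \vx)_{\geq j}$ for $j = 1, \dots, m+1$, so that $\vz_1 = \vy$, $\vz_{m+1} = \vx$, and $\vz_j - \vz_{j+1} = (\vy - \vx)^{(j)}$ is supported only in block $j$. Telescoping $f(\vy) - f(\vx) = \sum_{j=1}^m [f(\vz_j) - f(\vz_{j+1})]$, applying the fundamental theorem of calculus along each segment, and adding and subtracting $\innp{\nabla^{(j)} f(\vx), (\vy-\vx)^{(j)}}$ yields
\begin{equation*}
f(\vy) - f(\vx) = \innp{\nabla f(\vx), \vy - \vx} + \sum_{j=1}^m \int_0^1 \innp{\nabla^{(j)} f(\vu_j(t)) - \nabla^{(j)} f(\vx),\, (\vy - \vx)^{(j)}}\, dt,
\end{equation*}
where $\vu_j(t) := \vz_{j+1} + t (\vy - \vx)^{(j)}$. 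The critical observation is that $\vu_j(t) - \vx = (\vy - \vx)_{\geq j+1} + t(\vy - \vx)^{(j)}$ is supported in blocks of index $\geq j$ and equals the convex combination $(1-t)(\vy-\vx)_{\geq j+1} + t(\vy-\vx)_{\geq j}$.

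Applying Cauchy--Schwarz inside the inner product, Jensen on the $t$-integral, and then Cauchy--Schwarz across $j$ bounds the error term in absolute value by $\|\vy-\vx\|\sqrt{\sum_{j=1}^m \int_0^1 \|\nabla^{(j)} f(\vu_j(t)) - \nabla^{(j)} f(\vx)\|^2\,dt}$. Assumption~\ref{assmpt:Lip-const} upper bounds each squared gradient difference by $\|\vu_j(t) - \vx\|_{\mQ^j}^2$, and convexity of $\vz \mapsto \vz^T \mQ^j \vz$ (which holds since $\mQ^j \succeq 0$) combined with the convex-combination decomposition gives
\begin{equation*}
\|\vu_j(t) - \vx\|_{\mQ^j}^2 \leq (1-t)\|(\vy - \vx)_{\geq j+1}\|_{\mQ^j}^2 + t \|(\vy - \vx)_{\geq j}\|_{\mQ^j}^2.
\end{equation*}
Integrating in $t$ produces a factor $1/2$; summing in $j$ and observing that $(\vy-\vx)_{\geq j}$ vanishes on blocks of index $<j$, so $\|(\vy-\vx)_{\geq j}\|_{\mQ^j}^2 = (\vy-\vx)^T (\mQ^j)_{\geq j}(\vy-\vx)$ and similarly for $(\mQ^j)_{\geq j+1}$, the sum collapses to $\tfrac{1}{2}(\vy-\vx)^T \tilde{\mQ}(\vy-\vx) \leq \tfrac{\|\tilde{\mQ}\|}{2}\|\vy-\vx\|^2 = \tfrac{L^2}{4}\|\vy-\vx\|^2$. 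Substituting this back into the Cauchy--Schwarz bound delivers $f(\vy) - f(\vx) - \innp{\nabla f(\vx), \vy - \vx} \leq \tfrac{L}{2}\|\vy - \vx\|^2$.

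The second inequality follows by the standard reduction. Define $h(\vz) := f(\vz) - \innp{\nabla f(\vx), \vz}$: this is convex, its blockwise gradients inherit the same Lipschitz estimates as those of $f$ (adding a linear term does not affect second-order behavior), and $\nabla h(\vx) = \vzero$ shows that $\vx$ minimizes $h$. Applying the first inequality to $h$ at $\vy$ and minimizing the resulting upper bound over $\vz$ (the minimizer being $\vz = \vy - \nabla h(\vy)/L$) gives $h(\vx) \leq \inf_\vz h(\vz) \leq h(\vy) - \tfrac{1}{2L}\|\nabla h(\vy)\|^2$; rearranging and unpacking the definition of $h$ yields the co-coercivity bound. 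The main obstacle I anticipate is the bookkeeping that aligns the supports of the error vectors with the $(\mQ^j)_{\geq j}$ and $(\mQ^j)_{\geq j+1}$ building blocks of $\tilde{\mQ}$; in particular, had I interpolated in the forward direction from $\vx$ to $\vy$, the supports would lie in blocks of index $<j$ and the natural bound would not match the definition of $\tilde{\mQ}$, so the reverse-order interpolation is essential.
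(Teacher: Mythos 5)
Your proof is correct and follows essentially the same route as the paper: the same blockwise interpolation path (your $\vz_j$ are the paper's $\vz_{j-1}$), the same use of the block Lipschitz condition together with convexity of the quadratic form to expose $\tilde{\mQ}$, and the same classical reduction via $h_{\vx}(\cdot) = f(\cdot) - \innp{\nabla f(\vx), \cdot}$ for the co-coercivity bound. The only difference is cosmetic: you combine the error terms with Cauchy--Schwarz and Jensen, whereas the paper uses Young's inequality with a free parameter $\alpha$ and then optimizes $\alpha = 2/L$; these are equivalent and yield the identical constant $L/2$.
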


We now derive the A-CODER algorithm. We define $\{a_k\}_{k\ge 1}$ and $\{A_k\}_{k\ge 1}$ to be sequences of positive numbers with $A_k =\sum_{i=1}^k a_i , a_0 = A_0 = 0.$ Let $\{\vx_k\}_{k\ge 0}$  be an arbitrary sequence of points in $\dom(g)$. Our goal here is to bound the function value gap $\bar{f}(\vy_k) - \bar{f}(\vu)$ above for all $\vu\in\dom(g)$. Towards this goal, we define an estimation sequence $\psi_k$ recursively by $\psi_0(\vu) = \frac{1}{2}\|\vu - \vx_0\|^2$ and
\begin{equation}\notag
    \psi_k(\vu) := \psi_{k-1}(\vu) + a_k \big( f(\vx_k)  + \innp{\vq_k, \vu - \vx_k} + g(\vu)\big)
\end{equation}
for $k \geq 1$.
Meanwhile, $\vv_k$ and $\vy_k$ are defined as $\vv_k := \argmin_{\vu\in \sR^d}\psi_k(\vu)$ and $\vy_k := \frac{1}{A_k}\sum_{i=1}^k a_i \vv_i$ respectively.
We start our analysis by characterizing the gap function in the following lemma.
\begin{restatable}{lemma}{lemmaacodergapfn}\label{lemma:a-coder-gap-fn}
For any $\vu \in \sR^d$ and any sequence of vectors $\{\vq_i\}_{i\geq 1},$ we have
\begin{equation}
A_k(\bar{f}(\vy_k) - \bar{f}(\vu)) \le \sum_{i=1}^k E_i(\vu) + \frac{1}{2}\|\vu - \vx_0\|^2-\frac{1 + A_k \gamma}{2}\|\vu - \vv_k\|^2,
\end{equation}
where 
\begin{align}
E_i(\vu) =\;&  A_i  (f(\vy_i) - f(\vx_i))  - A_{i-1}  (f(\vy_{i-1}) - f(\vx_i)) \nonumber \\ 
& - a_i \innp{\vq_i, \vv_i - \vx_i}   + a_i \innp{\nabla f(\vx_i) -\vq_i,  \vx_i - \vu} \nonumber \\
& - \frac{1+A_{i-1}\gamma}{2}\|\vv_i - \vv_{i-1}\|^2. \label{eq:acc-E_i}   
\end{align}
\end{restatable}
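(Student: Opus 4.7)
The plan is a standard gap bounding argument tailored to the estimation sequence $\psi_k$. I would begin by splitting $A_k(\bar f(\vy_k) - \bar f(\vu)) = [A_k f(\vy_k) - A_k f(\vu)] + [A_k g(\vy_k) - A_k g(\vu)]$ and handling each piece so that what survives either matches a summand of $E_i(\vu)$ or is absorbed into the quadratic terms generated by $\psi_k$.

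For the smooth piece, the convex combination $A_i\vy_i = A_{i-1}\vy_{i-1} + a_i\vv_i$ yields the algebraic identity $A_i f(\vy_i) - A_{i-1} f(\vy_{i-1}) = a_i f(\vx_i) + A_i[f(\vy_i) - f(\vx_i)] - A_{i-1}[f(\vy_{i-1}) - f(\vx_i)]$, requiring no convexity. Summing from $i=1$ to $k$ and applying convexity of $f$ at $\vx_i$ to replace $f(\vx_i)$ by $f(\vu) + \innp{\nabla f(\vx_i), \vx_i - \vu}$, I would then split the gradient as $\nabla f(\vx_i) = \vq_i + (\nabla f(\vx_i) - \vq_i)$ and use $\innp{\vq_i, \vx_i - \vu} = \innp{\vq_i, \vv_i - \vu} - \innp{\vq_i, \vv_i - \vx_i}$ so that the $-\innp{\vq_i, \vv_i - \vx_i}$ and $\innp{\nabla f(\vx_i) - \vq_i, \vx_i - \vu}$ pieces destined for $E_i(\vu)$ peel off cleanly. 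Jensen's inequality applied to $g$ disposes of the nonsmooth part via $A_k g(\vy_k) \le \sum_{i=1}^k a_i g(\vv_i)$, leaving only the residual $R := \sum_{i=1}^k[a_i\innp{\vq_i, \vv_i - \vu} + a_i(g(\vv_i) - g(\vu))]$ to control.

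This residual is exactly where the estimation sequence enters. Direct computation from the recursion for $\psi_i$ gives $\psi_i(\vu) - \psi_i(\vv_i) - [\psi_{i-1}(\vu) - \psi_{i-1}(\vv_i)] = -[a_i\innp{\vq_i, \vv_i - \vu} + a_i(g(\vv_i) - g(\vu))]$, so $R = \sum_{i=1}^k\{[\psi_{i-1}(\vu) - \psi_{i-1}(\vv_i)] - [\psi_i(\vu) - \psi_i(\vv_i)]\}$, a near-telescoping sum. I would then invoke the $(1+A_{i-1}\gamma)$-strong convexity of $\psi_{i-1}$ at its minimizer $\vv_{i-1}$, i.e., $\psi_{i-1}(\vv_i) \ge \psi_{i-1}(\vv_{i-1}) + \frac{1+A_{i-1}\gamma}{2}\|\vv_i - \vv_{i-1}\|^2$, to rewrite $-\psi_{i-1}(\vv_i)$; the sum then collapses to $\psi_0(\vu) - \psi_0(\vv_0) - [\psi_k(\vu) - \psi_k(\vv_k)] - \sum_{i=1}^k\frac{1+A_{i-1}\gamma}{2}\|\vv_i - \vv_{i-1}\|^2$. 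Since $\vv_0 = \vx_0$ and $\psi_0(\vx_0) = 0$, and since $(1+A_k\gamma)$-strong convexity of $\psi_k$ yields $-[\psi_k(\vu) - \psi_k(\vv_k)] \le -\frac{1+A_k\gamma}{2}\|\vu - \vv_k\|^2$, this produces exactly the two quadratic terms on the right-hand side of the lemma and supplies the $-\frac{1+A_{i-1}\gamma}{2}\|\vv_i - \vv_{i-1}\|^2$ contributions that sit inside $E_i(\vu)$.

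I do not expect any single difficult step; the main obstacle is bookkeeping, specifically being careful to apply strong convexity of $\psi_{i-1}$ rather than $\psi_i$ to $\psi_{i-1}(\vv_i)$, since that choice simultaneously makes the sum telescope and yields the $(1+A_{i-1}\gamma)$ coefficient appearing inside $E_i(\vu)$. Once this alignment is correct, collecting all leftover terms and matching them against the definition of $E_i(\vu)$ is purely mechanical.
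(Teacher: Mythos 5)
Your proposal is correct and follows essentially the same argument as the paper: Jensen's inequality for $g$, convexity of $f$ at the points $\vx_i$, the gradient split into $\vq_i$ and $\nabla f(\vx_i) - \vq_i$, and the two strong-convexity invocations ($\psi_{i-1}$ at its minimizer $\vv_{i-1}$ to produce the $\frac{1+A_{i-1}\gamma}{2}\|\vv_i-\vv_{i-1}\|^2$ terms, and $\psi_k$ at $\vv_k$ to produce $-\frac{1+A_k\gamma}{2}\|\vu-\vv_k\|^2$). The only difference is organizational: you isolate the residual $R$ and control it via a near-telescoping sum of $\psi$-differences, whereas the paper instead telescopes $\psi_k(\vv_k)=\sum_i(\psi_i(\vv_i)-\psi_{i-1}(\vv_{i-1}))$ directly after lower-bounding $A_k\bar f(\vu)$ through $\psi_k(\vu)$; the two routes reduce to identical algebra once expanded.
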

%
%

\begin{algorithm}[t!]
\caption{Accelerated Cyclic cOordinate Dual avEraging with extRapolation (A-CODER)} \label{alg:acc-coder}
\begin{algorithmic}[1]
\STATE \textbf{Input:} $\vx_0\in\mathrm{dom}(g)$, $\gamma \geq 0$, $L>0$, $m$, $\{S^1, \dots, S^m\}$
\STATE \textbf{Initialization:} $\vx_{-1} = \vx_{0} = \vv_{-1} = \vv_0 = \vy_0$; $\vp_0 = \nabla f(\vx_0)$; $\vz_0 = \vzero$; $a_0= A_0 = 0$
\FOR {$k = 1$ to $K$} 
    \STATE Set $a_k > 0$ be largest value s.t. $\frac{a_k^2}{A_{k}} \le \frac{2 \pr{1 + A_{k-1} \gamma}}{5 L}$ where $A_k = A_{k-1} + a_k$
    \STATE $\vx_k = \frac{A_{k-1}}{A_k}\vy_{k-1} + \frac{a_k}{A_k}\vv_{k-1}$
    \FOR {$j = m$ to $1$}
        \STATE $\vp_k^{(j)} = \nabla^{(j)} f(\vx^{(1)}_{k}, \ldots, \vx^{(j)}_{k}, \vy^{(j+1)}_{k},\ldots,   \vy^{(m)}_{k})$
        \STATE $\vq^{(j)}_k = \vp_k^{(j)} + \frac{a_{k-1}}{a_k}(\nabla^{(j)}f(\vx_{k-1}) - \vp_{k-1}^{(j)})$
        \STATE $\vz_k^{(j)} = \vz_{k-1}^{(j)} + a_k\vq^{(j)}_k$
        \STATE $\vv_{k}^{(j)} = \mathrm{prox}_{A_k g^j}(\vx_0^{(j)} - \vz_k^{(j)})$
        \STATE $\vy_k^{(j)} = \frac{A_{k-1}}{A_k}\vy_{k-1}^{(j)} + \frac{a_k}{A_k}\vv_k^{(j)}$
    \ENDFOR
\ENDFOR
\STATE \textbf{return} $\vv_K, \vy_K$
\end{algorithmic}	
\end{algorithm}
%
%
Lemma~\ref{lemma:a-coder-gap-fn} applies to an arbitrary algorithm that satisfies its assumptions. From now on, we make the analysis specific to A-CODER (Algorithm~\ref{alg:acc-coder}).
In Lemma~\ref{lemma:a-coder-gap-fn}, $\{E_i(\vu)\}$ are the error terms that we need to bound above. If $\sum_{i=1}^k E_i(\vu) \le \frac{1 + A_k\gamma}{2}\|\vu - \vv_k\|^2,$ then we get the desired $1/A_k$ rate. To this end, we bound each term $E_k(\vu)$ in Lemma \ref{lemma:acoder-gap-change} by using the extrapolation direction $\vq_k$, the definition of $\vy_k, \vx_k$ and the parameter setting of $a_k$.
\begin{restatable}{lemma}{lemmaacodergapchange} \label{lemma:acoder-gap-change}
Let $\vx_0 \in \dom(g)$ be an arbitrary initial point and consider the updates in Algorithm~\ref{alg:acc-coder}. If, for $k \geq 1$, $\frac{{a_k}^2}{A_k} \leq \frac{2 (1 + A_{k-1}\gamma)}{5 L}$, then $\forall \vu$,
\begin{align*}
    E_k(\vu) \le\;& a_k \innp{\nabla f(\vx_k)- \vp_k, \vv_k - \vu} \\
    & - a_{k-1}\innp{\nabla f(\vx_{k-1}) - \vp_{k-1}, \vv_{k-1} - \vu} \\  
    & - \frac{1 + A_{k-1}\gamma}{10}\|\vv_k - \vv_{k-1}\|^2 \\
    & + \frac{1 + A_{k-2}\gamma}{10}\|\vv_{k-1} - \vv_{k-2}\|^2.
\end{align*}
\end{restatable}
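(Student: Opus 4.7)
The plan is to bound $E_k(\vu)$ in three stages: (i) turn the two function-value differences into linear-plus-quadratic gradient expressions using Lemma~\ref{lemma:Lip-quad-ub} and convexity of $f$, (ii) use the definition of $\vq_k$ to expose a telescoping pair of gradient-error inner products, and (iii) absorb the resulting cross term via Young's inequality together with Assumption~\ref{assmpt:Lip-const}. For (i), the smoothness inequality of Lemma~\ref{lemma:Lip-quad-ub} applied to $A_k\pr{f(\vy_k) - f(\vx_k)}$, combined with $f(\vy_{k-1}) \ge f(\vx_k) + \innp{\nabla f(\vx_k), \vy_{k-1} - \vx_k}$, gives after using $A_k\vy_k = A_{k-1}\vy_{k-1} + a_k\vv_k$ and $A_k - A_{k-1} = a_k$ to collapse the linear pieces,
\[
A_k\pr{f(\vy_k) - f(\vx_k)} - A_{k-1}\pr{f(\vy_{k-1}) - f(\vx_k)} \le a_k\innp{\nabla f(\vx_k), \vv_k - \vx_k} + \frac{A_k L}{2}\|\vy_k - \vx_k\|^2.
\]
Since $\vy_k - \vx_k = \frac{a_k}{A_k}(\vv_k - \vv_{k-1})$ and the step-size rule yields $\frac{a_k^2 L}{2A_k} \le \frac{1+A_{k-1}\gamma}{5}$, the quadratic remainder is at most $\frac{1+A_{k-1}\gamma}{5}\|\vv_k - \vv_{k-1}\|^2$.

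Substituting back into $E_k(\vu)$ and merging the linear terms into a single $\innp{\nabla f(\vx_k) - \vq_k, \vv_k - \vu}$ produces
\[
E_k(\vu) \le a_k\innp{\nabla f(\vx_k) - \vq_k, \vv_k - \vu} - \frac{3(1+A_{k-1}\gamma)}{10}\|\vv_k - \vv_{k-1}\|^2.
\]
The extrapolation identity $a_k(\nabla f(\vx_k) - \vq_k) = a_k(\nabla f(\vx_k) - \vp_k) - a_{k-1}(\nabla f(\vx_{k-1}) - \vp_{k-1})$, combined with the splitting $\vv_k - \vu = (\vv_{k-1} - \vu) + (\vv_k - \vv_{k-1})$ on the $\vp_{k-1}$ piece, delivers the two telescoping inner products that appear in the target bound plus a residual cross term $-a_{k-1}\innp{\nabla f(\vx_{k-1}) - \vp_{k-1}, \vv_k - \vv_{k-1}}$ which must still be absorbed.

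The main obstacle is handling this residual cross term, and this is the crucial place where Assumption~\ref{assmpt:Lip-const} enters. I would apply Young's inequality tuned so that its quadratic half takes exactly $\frac{2(1+A_{k-1}\gamma)}{10}\|\vv_k - \vv_{k-1}\|^2$ out of the existing $\frac{3(1+A_{k-1}\gamma)}{10}$ budget, leaving the desired $\frac{1+A_{k-1}\gamma}{10}\|\vv_k - \vv_{k-1}\|^2$. The dual half is a constant multiple of $\|\nabla f(\vx_{k-1}) - \vp_{k-1}\|^2$, which I bound blockwise: since the vector $(\vx_{k-1}^{(1)},\dots,\vx_{k-1}^{(j)},\vy_{k-1}^{(j+1)},\dots,\vy_{k-1}^{(m)}) - \vx_{k-1}$ is supported on blocks with index $>j$, summing the per-block Lipschitz inequalities from Assumption~\ref{assmpt:Lip-const} yields $\|\nabla f(\vx_{k-1}) - \vp_{k-1}\|^2 \le (\vy_{k-1} - \vx_{k-1})^T\big(\sum_{j=1}^m (\mQ^j)_{\ge j+1}\big)(\vy_{k-1} - \vx_{k-1}) \le \|\tilde{\mQ}\|\cdot\|\vy_{k-1} - \vx_{k-1}\|^2 = \frac{L^2 a_{k-1}^2}{2 A_{k-1}^2}\|\vv_{k-1} - \vv_{k-2}\|^2$, where the last equality uses $\|\tilde{\mQ}\| = L^2/2$ together with $\vy_{k-1} - \vx_{k-1} = \frac{a_{k-1}}{A_{k-1}}(\vv_{k-1} - \vv_{k-2})$. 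Plugging this in and invoking the step-size rule at index $k-1$ together with the monotonicity $A_{k-2} \le A_{k-1}$ collapses the coefficient of $\|\vv_{k-1} - \vv_{k-2}\|^2$ to exactly $\frac{1+A_{k-2}\gamma}{10}$, matching the claimed inequality.
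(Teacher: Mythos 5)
Your proposal is correct and follows essentially the same route as the paper's proof: Lemma~\ref{lemma:Lip-quad-ub} plus convexity to handle the function-value differences, the extrapolation identity for $\vq_k$ to produce the telescoping pair plus the residual cross term, and Young's inequality combined with the blockwise Lipschitz bound $\sum_{j}\|\vy_{k-1}-\vx_{k-1}\|_{(\mQ^j)_{\ge j+1}}^2 \le \|\tilde{\mQ}\|\,\|\vy_{k-1}-\vx_{k-1}\|^2$ and the step-size rule to absorb it. The only difference is cosmetic: the paper applies Young's inequality blockwise with $\alpha = \frac{A_{k-1}}{a_{k-1}L}$, while you tune the quadratic half to exactly $\frac{1+A_{k-1}\gamma}{5}\|\vv_k-\vv_{k-1}\|^2$; both allocations yield the stated coefficients.
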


We are now ready to state the main convergence result of this section.
\begin{restatable}{theorem}{thmacoder} \label{thm:acoder}
Let $\vx_0 \in \dom(g)$ be an arbitrary initial point and consider the updates in Algorithm~\ref{alg:acc-coder}. Then, $\forall k \geq 1$ and any $\vu \in \dom(g)$:
\begin{equation*}
    \bar{f}(\vy_k) - \bar{f}(\vu) +  \frac{3(1+ A_{k-1}\gamma)}{10 A_k}\|\vu - \vv_k\|^2  \leq \frac{\|\vu - \vx_0\|^2}{2A_k}. 
\end{equation*}
In particular, if $\vx^* = \argmin_{\vx} \bar{f}(\vx)$ exists, then
\begin{equation*}
    \bar{f}(\vy_k) - \bar{f}(\vx^*) \leq\frac{\|\vx^* - \vx_0\|^2}{2A_k}. 
\end{equation*}
Further, in this case we also have:
\begin{equation*}
    \|\vv_k - \vx^*\|^2 \leq \frac{5}{3(1 + A_{k-1} \gamma)} \|\vx_0 - \vx^*\|^2,
\end{equation*}
\begin{equation*}
    \|\vy_k - \vx^*\|^2 \leq \pr{\frac{5}{3 A_k} \sum_{i=1}^k \frac{a_i}{1 + A_{i-1} \gamma}} \|\vx_0 - \vx^*\|^2.
\end{equation*}
Finally, in all the bounds we have
\begin{equation*}
    A_k \geq \max \Bigg\{ \frac{2}{5 L}\bigg(1 + \sqrt{\frac{2 \gamma}{5 L}}\bigg)^{k}, \; \frac{k^2}{10 L}\Bigg\}.
\end{equation*}
%
\end{restatable}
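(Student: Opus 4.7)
The plan is to derive the main inequality by combining the gap characterization of Lemma~\ref{lemma:a-coder-gap-fn} with the per-iteration bound of Lemma~\ref{lemma:acoder-gap-change}, carefully controlling the extrapolation error via Assumption~\ref{assmpt:Lip-const} and Lemma~\ref{lemma:Lip-quad-ub}. First, summing Lemma~\ref{lemma:acoder-gap-change} over $i = 1, \ldots, k$ makes the $\|\vv_i - \vv_{i-1}\|^2$ terms and the $a_i \innp{\nabla f(\vx_i) - \vp_i, \vv_i - \vu}$ terms telescope. The boundary at $i=0$ vanishes by initialization ($a_0 = 0$, $\vv_0 = \vv_{-1}$, $\vp_0 = \nabla f(\vx_0)$), leaving
\begin{equation*}
    \sum_{i=1}^k E_i(\vu) \;\le\; a_k \innp{\nabla f(\vx_k) - \vp_k,\, \vv_k - \vu} \;-\; \frac{1 + A_{k-1}\gamma}{10}\|\vv_k - \vv_{k-1}\|^2.
\end{equation*}

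The main technical step is to dispose of the remaining cross term. Because $\vp_k^{(j)}$ is the $j$-th block of $\nabla f$ at a point that coincides with $\vx_k$ on blocks $\le j$ and with $\vy_k$ on blocks $> j$, the block Lipschitz condition (eq:block-Lipschitz) gives $\|\nabla^{(j)} f(\vx_k) - \vp_k^{(j)}\|^2 \le (\vy_k - \vx_k)^T (\mQ^j)_{\ge j+1}(\vy_k - \vx_k)$, and summing over $j$ together with the definition of $\tilde{\mQ}$ yields $\|\nabla f(\vx_k) - \vp_k\|^2 \le \frac{L^2}{2}\|\vy_k - \vx_k\|^2$. Since the algorithm enforces $\vy_k - \vx_k = \tfrac{a_k}{A_k}(\vv_k - \vv_{k-1})$, this bound becomes $\|\nabla f(\vx_k) - \vp_k\|^2 \le \tfrac{L^2 a_k^2}{2 A_k^2}\|\vv_k - \vv_{k-1}\|^2$. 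I then apply Young's inequality with a parameter $\eta$ chosen so that the resulting $\|\vv_k - \vv_{k-1}\|^2$ piece is absorbed by the negative telescoping remnant $-\tfrac{1+A_{k-1}\gamma}{10}\|\vv_k - \vv_{k-1}\|^2$, using the step-size constraint $a_k^2/A_k \le 2(1+A_{k-1}\gamma)/(5L)$; the matching $\|\vv_k - \vu\|^2$ piece is absorbed into the $-\tfrac{1+A_k\gamma}{2}\|\vv_k - \vu\|^2$ term from Lemma~\ref{lemma:a-coder-gap-fn}. Tracking constants, the leftover coefficient of $\|\vu - \vv_k\|^2$ on the LHS is exactly $\tfrac{3(1+A_{k-1}\gamma)}{10 A_k}$, which is the delicate point where the specific $2/5$ factor in the step-size rule is needed.

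Plugging $\vu = \vx^*$ into the main inequality immediately yields the function-value bound, and dropping the nonnegative primal gap gives $\|\vv_k - \vx^*\|^2 \le \tfrac{5}{3(1+A_{k-1}\gamma)}\|\vx_0 - \vx^*\|^2$. For the $\vy_k$ bound, an induction on the update $\vy_k = \tfrac{A_{k-1}}{A_k}\vy_{k-1} + \tfrac{a_k}{A_k}\vv_k$ shows $\vy_k = \tfrac{1}{A_k}\sum_{i=1}^k a_i \vv_i$, after which Jensen's inequality applied to $\|\cdot - \vx^*\|^2$ and the per-iterate bound on $\|\vv_i - \vx^*\|^2$ give the stated estimate.

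For the growth of $A_k$, set $a_k$ to equality in the step-size rule and analyze the recursion $a_k^2 = \tfrac{2(1+A_{k-1}\gamma)A_k}{5L}$ with $A_k = A_{k-1} + a_k$. For $\gamma = 0$, writing $\sqrt{A_k} - \sqrt{A_{k-1}} = a_k/(\sqrt{A_k}+\sqrt{A_{k-1}})$ and using $a_k \ge \sqrt{2 A_k/(5L)}/\,2$-style bookkeeping produces $\sqrt{A_k} - \sqrt{A_{k-1}} \ge 1/\sqrt{10L}$, hence $A_k \ge k^2/(10L)$; the base case is checked directly from $a_1 = 2/(5L)$. For $\gamma > 0$, one uses $a_k \ge \sqrt{2 A_{k-1}\gamma A_k /(5L)}$ to show $A_k \ge A_{k-1}(1 + \sqrt{2\gamma/(5L)})$, giving the geometric lower bound by induction from $A_1 \ge 2/(5L)$. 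The main obstacle throughout is Paragraph~2: matching the constants in Young's inequality to the telescoping residual and to the Lyapunov coefficient on the LHS, since every choice of $\eta$ determines both absorptions simultaneously and only the particular balance built into the $2/5$ step-size rule keeps all error terms nonpositive while preserving the $3/10$ strong-convexity coefficient.
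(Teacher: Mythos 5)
Your proposal is correct and follows essentially the same route as the paper: telescope the bound from Lemma~\ref{lemma:acoder-gap-change}, bound the residual cross term $a_k\innp{\nabla f(\vx_k)-\vp_k,\vv_k-\vu}$ via the block-Lipschitz estimate $\|\nabla f(\vx_k)-\vp_k\|^2\le \tfrac{L^2}{2}\|\vy_k-\vx_k\|^2$ plus Young's inequality with $\alpha = A_k/(a_kL)$, absorb into the negative terms of Lemma~\ref{lemma:a-coder-gap-fn}, and then specialize to $\vu=\vx^*$, apply Jensen for $\vy_k$, and analyze the $A_k$ recursion. The only cosmetic differences are that the final coefficient $\tfrac{3(1+A_{k-1}\gamma)}{10A_k}$ arises as a lower bound (via $A_{k-1}\le A_k$) rather than ``exactly,'' and your $\sqrt{A_k}-\sqrt{A_{k-1}}$ bookkeeping for the $\gamma=0$ growth rate is a valid (if anything, cleaner) variant of the paper's argument.
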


\paragraph{Adaptive A-CODER.} The Lipschitz parameter $L$ used in the statement of A-CODER (Algorithm~\ref{alg:acc-coder}) is usually not readily available for typical instances of convex composite minimization problems; however, as we argue in Appendix~\ref{appx:omitted-proofs-acoder}, this parameter can be adaptively estimated using the standard backtracking line search. 
A variant of A-CODER implementing this adaptive estimation of $L$ is provided in Algorithm~\ref{alg:acc-coder-adaptive}. This is enabled by our analysis, which only requires the stated Lipschitz condition to hold between the successive iterates of the algorithm. Notably, unlike randomized algorithms which estimate Lipschitz constants for each of the coordinate blocks (see, e.g.,~\citet{nesterov2012efficiency}), we only need to estimate one summary Lipschitz parameter $L.$

\section{Variance Reduced A-CODER} \label{sec:vr-acoder}
In this section, we assume that the problem \eqref{eq:main-problem} has a finite sum structure, i.e., $f(\vx) = \frac{1}{n} \sum_{t=1}^n f_t(\vx),$ where $n$ may be very large. For this case, we can further reduce the per-iteration cost and improve the complexity results by combining the well-known SVRG-style variance reduction strategy~\cite{johnson2013accelerating} with the results from the previous section to obtain our variance reduced A-CODER (VR-A-CODER). From another perspective, VR-A-CODER can be seen as a cyclic gradient-extrapolated version of the recent VRADA algorithm for finite-sum composite convex minimization~\cite{song2020variance}. 

For this finite-sum setting, we need to make the following stronger assumption for each $f_t(\vx)$.

\begin{algorithm}[t!]
\caption{Variance Reduced A-CODER (Implementable Version)}\label{alg:vr-acc-coder-implementable}
\begin{algorithmic}[1]
\STATE \textbf{Input:} $\vx_0\in\mathrm{dom}(g)$, $\gamma \geq 0$, $L>0$, $m$, $\{\gS^1, \dots, \gS^m\}$
\STATE \textbf{Initialization:} $\vyt_0 = \vv_{1,0} = \vy_{1,0} = \vx_{1, 1} = \vx_{0}$; $\vz_{1, 0} = \mathbf{0}$
\STATE $a_0= A_0 = 0$; $A_1 = a_1 = \frac{1}{4L}$
\STATE $\vz_{1, 1} = \nabla f (\vx_0)$; $\vv_{1,1} = \mathrm{prox}_{a_1 g} (\vx_0 - \vz_{1, 1})$
\STATE $\vyt_1 = \vy_{1, 1} = \vv_{1,1}$
\STATE $\vw_{1, 1, j} = (\vx_{1, 1}^{(1)}, \ldots, \vx_{1, 1}^{(j)}, \vy_{1, 1}^{(j+1)},\ldots, \vy_{1, 1}^{(m)})$
\STATE $\vv_{2,0} = \vv_{1,1}$; $\vw_{2, 0, j} = \vw_{1, 1, j}$; $\vx_{2, 0} = \vx_{1, 1}$; $\vy_{2, 0} = \vy_{1, 1}$; $\vz_{2, 0} = \vz_{1, 1}$
\FOR{$s= 2$ to $S$}
    \STATE $a_s = \sqrt{\frac{K A_{s-1} \pr{1 + A_{s-1} \gamma}}{8 L}}$; $A_s = A_{s-1} + a_s$  
    \STATE $a_{s, 0} = a_{s-1}$; $a_{s,1}=a_{s,2}=\cdots =a_{s,K} = a_s$
    \STATE $\vv_{s, 0} = \vv_{s-1, K}$; $\vw_{s, 0, j} = \vw_{s-1, K, j}$; $\vx_{s, 0} = \vx_{s-1, K}$; $\vy_{s, 0} = \vy_{s-1, K}$; $\vz_{s, 0} = \vz_{s-1, K}$
    \STATE $\vmu_s = \nabla f(\vyt_{s-1})$
    \FOR{$k = 1$ to $K$} 
        \STATE $\vx_{s,k} = \frac{A_{s-1}}{A_s}\vyt_{s-1} + \frac{a_s}{A_s}\vv_{s,k-1}$
        \FOR{$j = m$ to $1$} %
            \STATE $\vw_{s, k, j} = (\vx_{s,k}^{(1)}, \ldots, \vx_{s,k}^{(j)}, \vy_{s,k}^{(j+1)},\ldots, \vy_{s,k}^{(m)})$
            \STATE Choose $t$ in $[n]$ uniformly at random
            \STATE $\tilde{\nabla}_{s,k}^{(j)} = \nabla^{(j)} f_t(\vw_{s, k, j}) - \nabla^{(j)} f_t(\vyt_{s-1}) + \vmu_s^{(j)}$
            \STATE $\vq_{s,k}^{(j)} = \tilde{\nabla}_{s,k}^{(j)} + \frac{a_{s,k-1}}{a_s}(\nabla^{(j)} f_t(\vx_{s,k-1}) - \nabla^{(j)} f_t(\vw_{s, k-1, j}))$
            \STATE $\vz_{s,k}^{(j)} = \vz_{s, k-1}^{(j)} + a_s \vq^{(j)}_{s,k}$
            \STATE $\vv_{s,k}^{(j)} = \mathrm{prox}_{(A_{s-1} + \frac{a_s k}{K}) g^j}(\vx_0^{(j)} - \vz_{s,k}^{(j)} / K)$
            \STATE $\vy_{s,k}^{(j)} = \frac{A_{s-1}}{A_s} \vyt_{s-1}^{(j)} + \frac{a_s}{A_s} \vv_{s,k}^{(j)}$
        \ENDFOR
    \ENDFOR
    \STATE $\vyt_s = \frac{1}{K}\sum_{k=1}^K \vy_{s,k}$
\ENDFOR
\STATE \textbf{return} $\vv_{S, K}, \vyt_S$
\end{algorithmic}	
\end{algorithm}

\begin{assumption} \label{assmpt:average-smooth}
For all $t\in[n],$ $f_t(\vx)$ is convex. Moreover for all $t \in [n]$, there exist positive semidefinite matrices $\{\mQ^1,\mQ^2,\ldots, \mQ^m\}$ such that $\nabla^{(j)} f_t (\cdot)$  is $1$-Lipschitz continuous w.r.t.~the norm $\|\cdot\|_{\mQ^j}$  i.e., $\forall \vx, \vy \in \sR^d, t\in[n]$,
\begin{equation*}
    \|\nabla^{(j)} f_t(\vx) - \nabla^{(j)} f_t(\vy)\|^2   
    \le\; \norm{\vx - \vy}_{\mQ^j}^2.
\end{equation*}
\end{assumption}

\begin{restatable}{lemma}{lemvrsmooth}\label{lem:vr-smooth}
If $f(\vx) = \frac{1}{n}\sum_{t=1}^n f_t(\vx)$ satisfies Assumption \ref{assmpt:average-smooth}, then it   satisfies Assumption \ref{assmpt:Lip-const} and thus Lemma \ref{lemma:Lip-quad-ub} holds.  
\end{restatable}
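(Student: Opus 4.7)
The plan is to show that the same matrices $\{\mQ^1,\ldots,\mQ^m\}$ that witness Assumption~\ref{assmpt:average-smooth} for each $f_t$ also witness Assumption~\ref{assmpt:Lip-const} for the average $f=\frac{1}{n}\sum_{t=1}^n f_t$. Once Assumption~\ref{assmpt:Lip-const} is established, invoking Lemma~\ref{lemma:Lip-quad-ub} (together with the convexity of $f$) closes the argument.

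First I would verify convexity: since each $f_t$ is convex by Assumption~\ref{assmpt:average-smooth}, the average $f=\frac{1}{n}\sum_t f_t$ is convex as well. Smoothness of $f$ in the classical sense also follows from the per-index smoothness of each $f_t$, so $f$ fits the premise of Lemma~\ref{lemma:Lip-quad-ub}.

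The core step is a standard Jensen / convexity-of-squared-norm calculation. For fixed $j\in[m]$ and any $\vx,\vy\in\sR^d$, writing $\nabla^{(j)} f(\vx)=\frac{1}{n}\sum_{t=1}^n \nabla^{(j)} f_t(\vx)$ and applying the convexity of $\|\cdot\|^2$ gives
\begin{align*}
\|\nabla^{(j)} f(\vx) - \nabla^{(j)} f(\vy)\|^2
&= \Big\|\tfrac{1}{n}\textstyle\sum_{t=1}^n\bigl(\nabla^{(j)} f_t(\vx) - \nabla^{(j)} f_t(\vy)\bigr)\Big\|^2 \\
&\le \tfrac{1}{n}\textstyle\sum_{t=1}^n \|\nabla^{(j)} f_t(\vx) - \nabla^{(j)} f_t(\vy)\|^2 \\
&\le \tfrac{1}{n}\textstyle\sum_{t=1}^n \|\vx-\vy\|_{\mQ^j}^2 = \|\vx-\vy\|_{\mQ^j}^2,
\end{align*}
where the last inequality is precisely Assumption~\ref{assmpt:average-smooth} applied to each $f_t$. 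This is exactly the inequality~\eqref{eq:block-Lipschitz} required for Assumption~\ref{assmpt:Lip-const}. Since the constant $L$ depends only on the matrices $\{\mQ^j\}$ via $L^2 = 2\|\tilde{\mQ}\|$ with $\tilde{\mQ} = \sum_{j=1}^m[(\mQ^j)_{\ge j} + (\mQ^j)_{\ge j+1}]$, the same $L$ serves for $f$.

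With Assumption~\ref{assmpt:Lip-const} verified for $f$ and $f$ convex and smooth, Lemma~\ref{lemma:Lip-quad-ub} applies directly, completing the proof. There is no substantive obstacle here; the only thing worth flagging is the uniformity: the \emph{same} matrices $\mQ^j$ appear in every per-component bound of Assumption~\ref{assmpt:average-smooth}, which is what allows the factor $\frac{1}{n}$ to cancel in the Jensen step and yields the constant-$1$ Lipschitz bound for the averaged gradient.
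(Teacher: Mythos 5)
Your proposal is correct and follows essentially the same route as the paper: the key step in both is the Jensen (convexity of $\|\cdot\|^2$) inequality $\|\nabla^{(j)} f(\vx) - \nabla^{(j)} f(\vy)\|^2 \le \frac{1}{n}\sum_{t=1}^n \|\nabla^{(j)} f_t(\vx) - \nabla^{(j)} f_t(\vy)\|^2 \le \|\vx-\vy\|_{\mQ^j}^2$, after which Assumption~\ref{assmpt:Lip-const} holds with the same matrices $\mQ^j$ and hence the same $L$. Your added remarks on convexity of the average and on the uniformity of the $\mQ^j$ across components are accurate but not needed beyond what the paper states.
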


With this assumption, we can now derive the VR-A-CODER algorithm. Similar to Section \ref{sec:acoder}, we define $\{a_s\}_{s\ge 1}$ and $\{A_s\}_{s\ge 1}$ to be sequences of positive numbers with $A_s =\sum_{i=1}^s a_i , a_0 = A_0 = 0.$
Let $\{\vyt_s\}_{s\ge 0}$  be a sequence of points in $\dom(g)$ which will be determined by the VR-A-CODER algorithm. Our goal here is to bound the function value gap $\bar{f}(\vyt_s) - \bar{f}(\vu)$ above $(\vu\in\dom(g))$. To attain this, we define the estimate sequence $\{\psi_{s,k}\}_{s\ge 1, k\in[K]}$ recursively by $\psi_{1,0}(\vu) = \frac{K}{2}\|\vu - \vx_0\|^2,$
\begin{dmath} \label{eq:es-1} 
    \psi_{1,1}(\vu) = \psi_{1,0}(\vu) + Ka_1(f(\vx_0) + \langle \nabla f(\vx_0), \vu - \vx_0 \rangle + g(\vu)), 
\end{dmath}
and $\psi_{2,0} = \psi_{1,1}$; for $ s \geq 2, 1\le k\le K,$
\begin{dmath} \label{eq:acoder-est-seq-def}
    \psi_{s,k}(\vu) = \psi_{s,k-1}(\vu) + a_s \big( f(\vx_{s,k}) + \innp{\vq_{s,k}, \vu - \vx_{s,k}} + g(\vu)\big),  
\end{dmath}
and $\psi_{s+1,0} = \psi_{s,K}$. In Eqs.~\eqref{eq:es-1}  and \eqref{eq:acoder-est-seq-def}, $\vx_{0}$ is the initial point, $\vx_{s,k}$ and $\vy_{s, k}$ are computed as convex combinations of two points, which is commonly used in Nesterov-style acceleration, and $\vq_{s,k} =(\vq_{s,k}^{(1)}, \vq_{s,k}^{(2)},\ldots, \vq_{s,k}^{(m)})$ is a variance reduced stochastic gradient with extrapolation, which is the main novelty in our algorithm design. Meanwhile, we define $\vv_{s,k}$ by $\vv_{s,k} := \argmin_{\vu\in \sR^d}\psi_{s,k}(\vu)$ and note that due to the specific choice of $\vq_{s,k}$, $\vv_{s,k}$ is updated in a cyclic (block) coordinate way. Furthermore, in VR-A-CODER, for $s\ge 2,$ we define $\vyt_s = \frac{1}{K}\sum_{k=1}^K \vy_{s,k}$.

We start our analysis  by characterizing the gap function, in the following lemma, similar to Lemma~\ref{lemma:a-coder-gap-fn} in Section~\ref{sec:acoder}, although the proof is much more technical in this case. Due to space constraints, we include the full analysis of Variance Reduced A-CODER in Appendix~\ref{appx:omitted-proofs-acoder-vr}.
\begin{restatable}{lemma}{lemmaacodergapfnvr}\label{lemma:a-coder-gap-fn-vr}
For any $\vu \in \sR^d$ and any sequence of vectors $\{\vq_{s,k}\}_{s\geq 2, k\in[K]},$ for all $S\ge 2$, we have
\begin{dgroup*}
\begin{dmath*}
    K A_S(\bar{f}(\vyt_S) - \bar{f}(\vu))
    \end{dmath*}
    \begin{dmath*}
    \le \frac{K}{2}\norm{\vx_0 - \vu}^2  -  \frac{K(1 +  A_S\gamma)}{2}\norm{\vv_{S, K}-\vu}^2 - \frac{K}{4}\norm{\vv_{1,1} - \vv_{1,0}}^2 +  \sum_{s=2}^S \sum_{k=1}^K E_{s,k}(\vu),
\end{dmath*}
\end{dgroup*}
where
\begin{dmath} \label{eq:E-def}
E_{s,k}(\vu) = A_{s}(f(\vy_{s,k}) -  f(\vx_{s,k})) - A_{s-1}(f(\vyt_{s-1}) - f(\vx_{s,k})) +  a_s \innp{\nabla f(\vx_{s, k}) - \vq_{s,k}, \vx_{s, k} - \vu}  + a_s \innp{ \vq_{s,k},  \vx_{s, k} - \vv_{s,k}} - \frac{K(1 + A_{s-1}\gamma)}{2}\|\vv_{s,k} - \vv_{s,k-1}\|^2.
\end{dmath}
\end{restatable}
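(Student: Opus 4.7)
The plan is to adapt the gap-bounding argument of Lemma~\ref{lemma:a-coder-gap-fn} to the outer/inner loop structure of VR-A-CODER and the inner averaging $\vyt_s=\tfrac{1}{K}\sum_{k=1}^K\vy_{s,k}$. A short induction first shows that $\psi_{s,K}$ is $K(1+A_s\gamma)$-strongly convex and, for $s\ge 2$, $\psi_{s,k-1}$ is at least $K(1+A_{s-1}\gamma)$-strongly convex, because every update adds either $Ka_1\gamma$ or $a_s\gamma$ from the strongly convex $g$ while the $f$ and inner-product contributions are affine in $\vu$. I then split
\[
KA_S\bigl(\bar f(\vyt_S)-\bar f(\vu)\bigr)=\bigl(\psi_{S,K}(\vv_{S,K})-KA_S\bar f(\vu)\bigr)+\bigl(KA_S\bar f(\vyt_S)-\psi_{S,K}(\vv_{S,K})\bigr)
\]
and bound the two brackets separately.

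For the first bracket, I would unfold the definition of $\psi_{S,K}(\vu)$, use convexity of $f$ at $\vx_0$ and each $\vx_{s,k}$ to replace the linearizations by $f(\vu)$ plus $\langle\nabla f(\vx_{s,k})-\vq_{s,k},\vx_{s,k}-\vu\rangle$, and then invoke $\vv_{S,K}=\argmin\psi_{S,K}$ together with $K(1+A_S\gamma)$-strong convexity to produce $\tfrac{K}{2}\|\vu-\vx_0\|^2-\tfrac{K(1+A_S\gamma)}{2}\|\vu-\vv_{S,K}\|^2$; the residual inner-product terms match exactly the third summand of $E_{s,k}(\vu)$. For the second bracket, I would telescope $\psi_{s,k}(\vv_{s,k})-\psi_{s,k-1}(\vv_{s,k-1})$ using the minimizer inequality $\psi_{s,k-1}(\vv_{s,k})\ge\psi_{s,k-1}(\vv_{s,k-1})+\tfrac{K(1+A_{s-1}\gamma)}{2}\|\vv_{s,k}-\vv_{s,k-1}\|^2$ and the explicit increment $a_s(f(\vx_{s,k})+\langle\vq_{s,k},\vv_{s,k}-\vx_{s,k}\rangle+g(\vv_{s,k}))$. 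The initial transition $\psi_{1,0}(\vv_{1,0})=0\to\psi_{1,1}(\vv_{1,1})$ uses the exact gradient and does not fit this pattern; here I would instead apply Lemma~\ref{lemma:Lip-quad-ub} in the form $f(\vx_0)+\langle\nabla f(\vx_0),\vv_{1,1}-\vx_0\rangle\ge f(\vv_{1,1})-\tfrac{L}{2}\|\vv_{1,1}-\vx_0\|^2$. Using $a_1=1/(4L)$, $\vv_{1,0}=\vx_0$, and $\vv_{1,1}=\vy_{1,1}=\vyt_1$, this yields $\psi_{1,1}(\vv_{1,1})\ge KA_1\bar f(\vyt_1)+\tfrac{3K}{8}\|\vv_{1,1}-\vv_{1,0}\|^2$, comfortably larger than the $\tfrac{K}{4}$ coefficient required by the statement.

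To link the resulting lower bound on $\psi_{S,K}(\vv_{S,K})$ to $KA_S\bar f(\vyt_S)$, I would invoke convexity of $\bar f$ twice: once on $\vy_{s,k}=\tfrac{A_{s-1}}{A_s}\vyt_{s-1}+\tfrac{a_s}{A_s}\vv_{s,k}$ to get $A_sg(\vy_{s,k})\le A_{s-1}g(\vyt_{s-1})+a_sg(\vv_{s,k})$, and once on $\vyt_s=\tfrac{1}{K}\sum_k\vy_{s,k}$ to telescope in $s$. Combined with the rearranged identity $\sum_{s=2}^S\sum_{k=1}^K[A_sf(\vy_{s,k})-A_{s-1}f(\vyt_{s-1})-a_sf(\vx_{s,k})]\ge KA_Sf(\vyt_S)-KA_1f(\vyt_1)$ (which is precisely the sum of the first two entries of $E_{s,k}$), the $a_sf(\vx_{s,k})$ and $a_sg(\vv_{s,k})$ contributions cancel exactly against those extracted from the $\psi$-telescoping. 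What survives on the right-hand side is $\sum_{s,k}E_{s,k}(\vu)$ together with the quadratic slack $-\tfrac{3K}{8}\|\vv_{1,1}-\vv_{1,0}\|^2$, of which only $\tfrac{K}{4}$ is retained for the stated bound. I expect the main obstacle to be carefully coordinating the three telescoping arguments (over $\psi$, over $f$ along $\vy_{s,k}$, and over $g$ along the same points) so that all linear-in-$f(\vx_{s,k})$ and linear-in-$g(\vv_{s,k})$ contributions cancel exactly, and calibrating the initial step $a_1=1/(4L)$ so that the smoothness slack in the $s=1$ stage dominates the $\tfrac{K}{4}\|\vv_{1,1}-\vv_{1,0}\|^2$ correction that must be kept for use in the subsequent convergence analysis.
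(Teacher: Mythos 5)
Your proposal is correct and follows essentially the same route as the paper's proof: convexity of $f$ plus strong convexity of $\psi_{S,K}$ at its minimizer to lower-bound $KA_S\bar{f}(\vu)$, telescoping of the estimate sequence via the minimizer inequality, Lemma~\ref{lemma:Lip-quad-ub} with $a_1=1/(4L)$ for the initial transition (your coefficient $\tfrac{3K}{8}$ is in fact slightly tighter than the paper's $\tfrac{K}{4}$, which uses the looser bound $\tfrac{1}{4a_1}$ in place of $\tfrac{L}{2}$), and Jensen's inequality for $f$ and $g$ over the inner averages $\vyt_s=\tfrac{1}{K}\sum_k\vy_{s,k}$. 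The only blemish is your displayed ``rearranged identity,'' which as written omits the term $+\sum_{s=2}^S\sum_{k=1}^K a_s f(\vx_{s,k})$ on its right-hand side, though your surrounding text makes clear you intend the correct cancellation of these terms against the $\psi$-telescoping.
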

\begin{figure*}[ht]
    \centering
    {\includegraphics[width=0.32\textwidth]{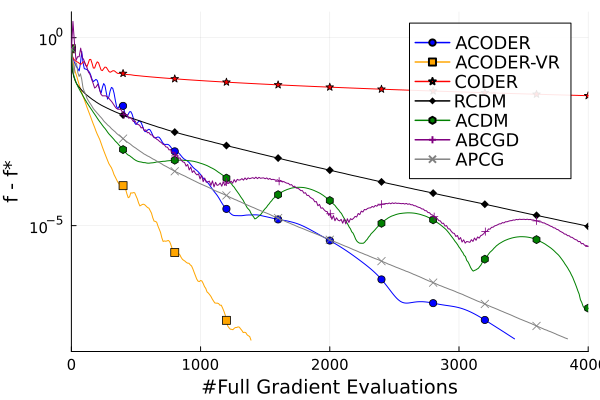}}
    {\includegraphics[width=0.32\textwidth]{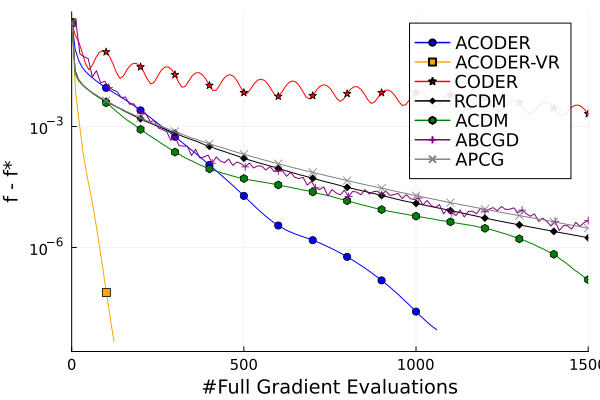}}
    {\includegraphics[width=0.32\textwidth]{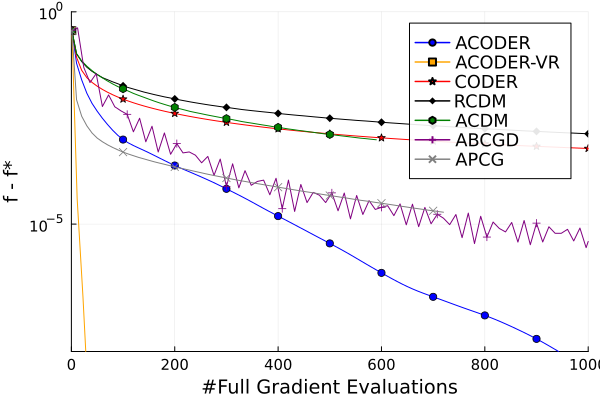}} \\
    {\includegraphics[width=0.32\textwidth]{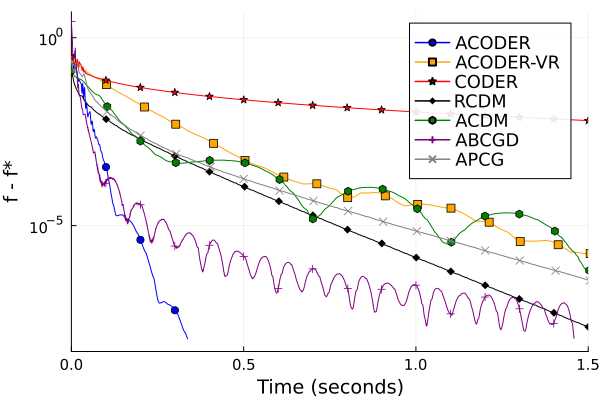}}
    {\includegraphics[width=0.32\textwidth]{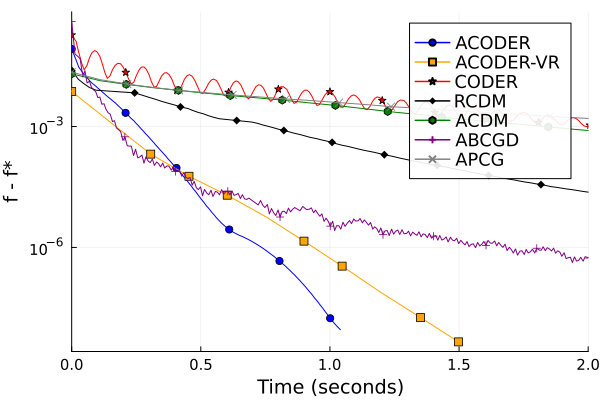}}
    {\includegraphics[width=0.32\textwidth]{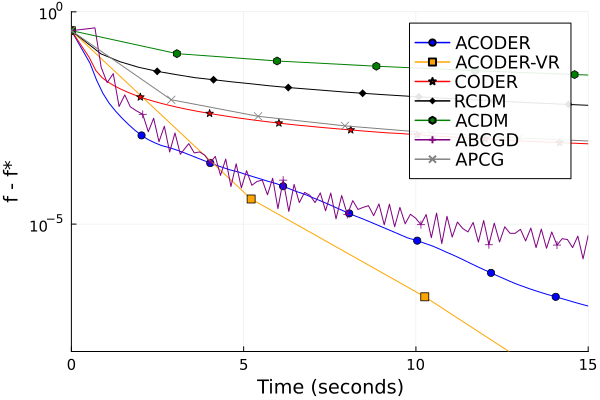}}
    \caption{Performance comparisons between implemented algorithms in terms of the number of full-gradient evaluations and wall-clock time for logistic regression with ridge regularized problems. The top row contains plots against the number of full-gradient evaluations, and the bottom row contains plots against the wall-clock time. The left column is for the sonar dataset, the middle column is for the a1a dataset and the rightmost column is for the a9a dataset, all obtained from LIBSVM~\cite{chang2011libsvm}.}
    \label{fig:numerical-results-0}
\end{figure*}
\begin{figure*}[ht]
    \centering
    {\includegraphics[width=0.32\textwidth]{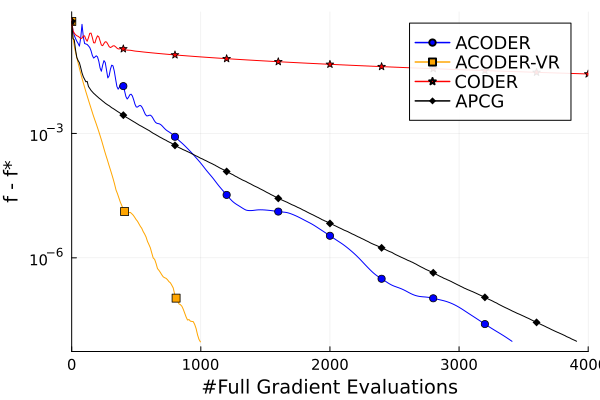}}
    {\includegraphics[width=0.32\textwidth]{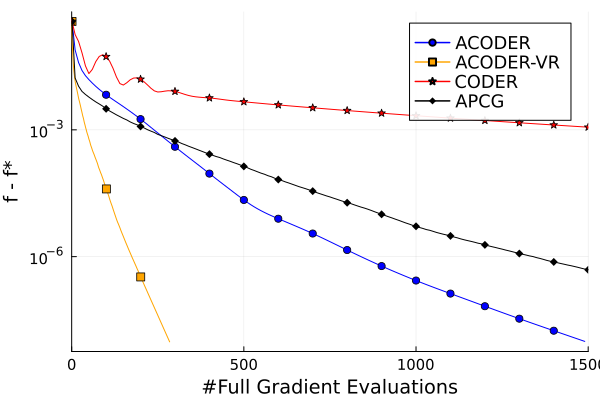}}
    {\includegraphics[width=0.32\textwidth]{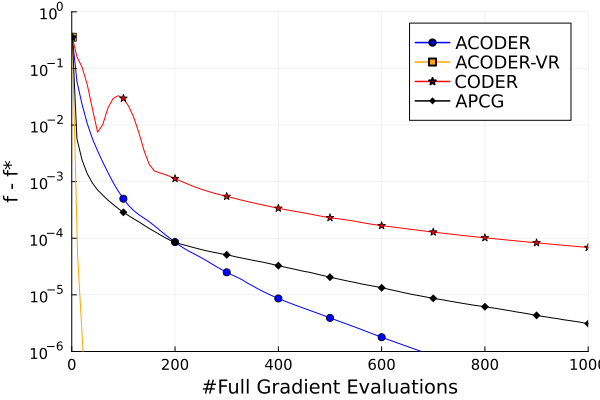}} \\
    {\includegraphics[width=0.32\textwidth]{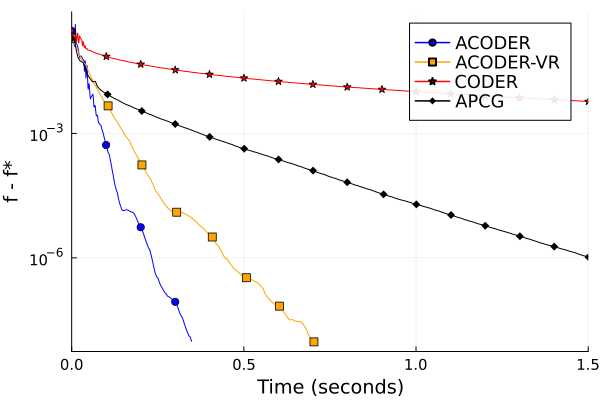}}
    {\includegraphics[width=0.32\textwidth]{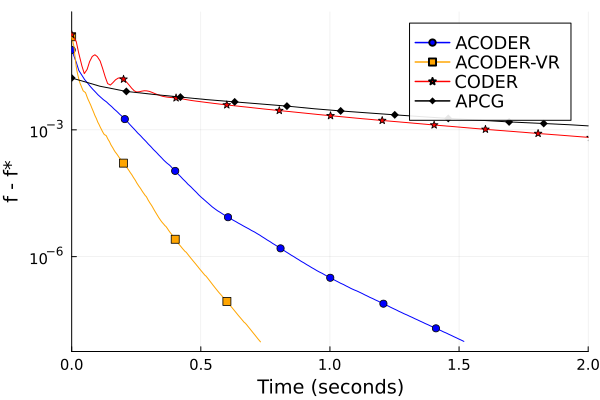}}
    {\includegraphics[width=0.32\textwidth]{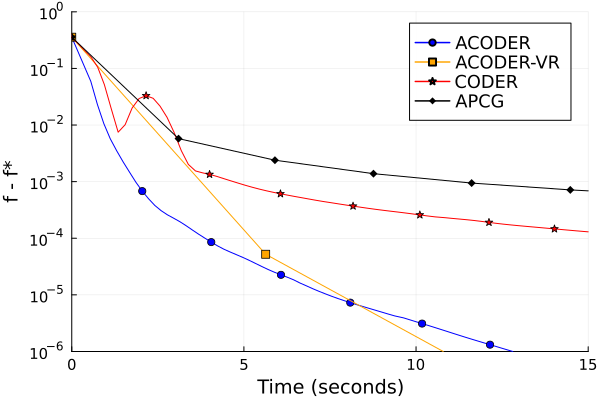}}
    \caption{Performance comparisons between implemented algorithms in terms of the number of full-gradient evaluations and wall-clock time for logistic regression with elastic net regularized problems. The top row contains plots against the number of full-gradient evaluations, and the bottom row contains plots against the wall-clock time. The left column is for the sonar dataset, the middle column is for the a1a dataset and the rightmost column is for the a9a dataset, all obtained from LIBSVM~\cite{chang2011libsvm}.}
    \label{fig:numerical-results-1}
\end{figure*}
\begin{figure*}[ht]
    \centering
    {\includegraphics[width=0.32\textwidth]{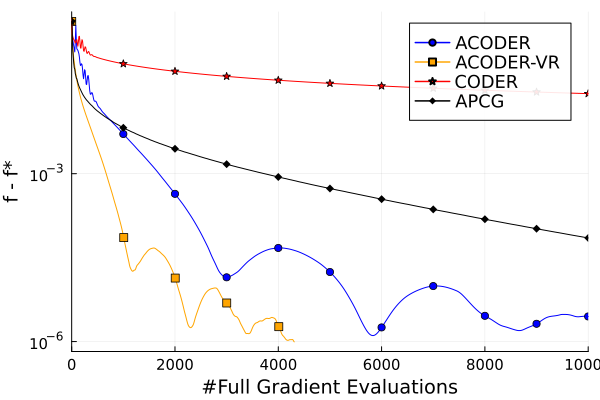}}
    {\includegraphics[width=0.32\textwidth]{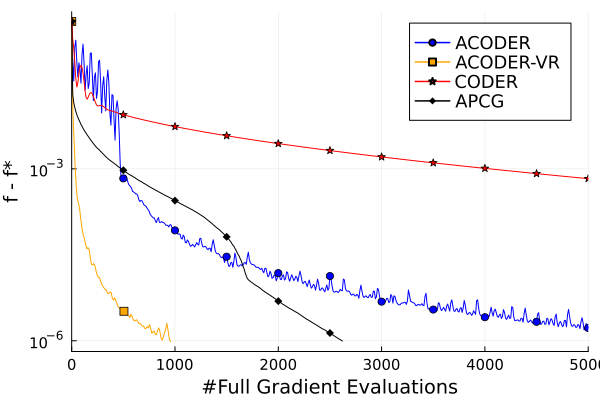}}
    {\includegraphics[width=0.32\textwidth]{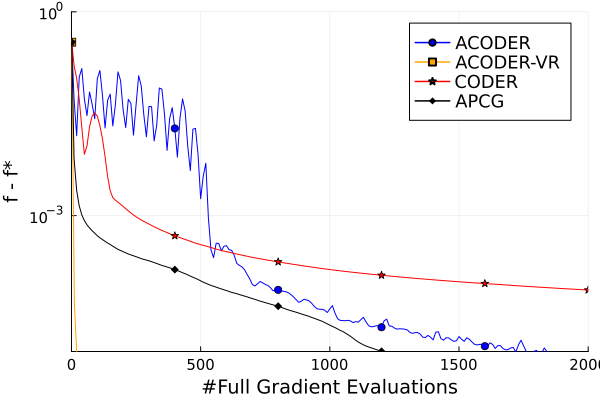}} \\
    {\includegraphics[width=0.32\textwidth]{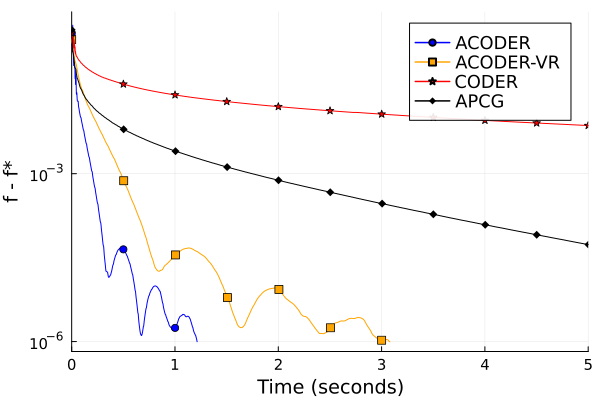}}
    {\includegraphics[width=0.32\textwidth]{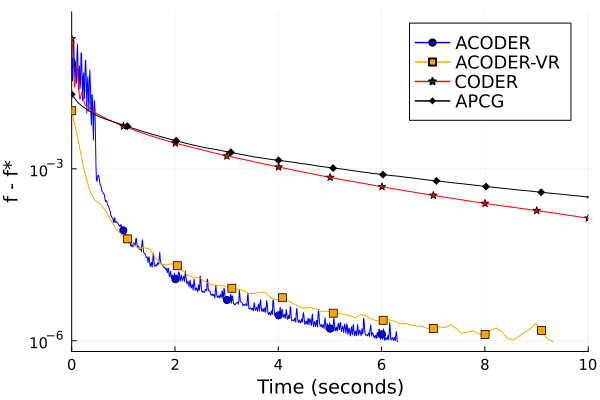}}
    {\includegraphics[width=0.32\textwidth]{figs/a9a-elastic_net-time.png}}
    \caption{Performance comparisons between various algorithms in terms of number of full-gradient evaluations and wall-clock time for logistic regression with LASSO regularized problems. The top row contains plots against the number of full-gradient evaluations, and the bottom two contains plots against wall-clock time. The left column is on sonar dataset, the middle column is on a1a dataset and the rightmost column is on a9a dataset.}
    \label{fig:numerical-results-2}
\end{figure*}

In the following lemma, we bound the expected error terms $\sum_{s=2}^S \sum_{k=1}^K \E \br{E_{s, k} (\vu)}$ arising from the gap bound stated in the previous lemma. This bound is then finally used in Theorem~\ref{thm:vracoder-main} to obtain the claimed convergence results. 

\begin{restatable}{lemma}{lemmavrcodererrortermboundcombined} \label{lemma:vrcoder-error-term-bound-combined}

With $a_s^2 \le \frac{K A_{s-1} \pr{1 + A_{s-1} \gamma}}{8 L}$, $a_{s, k} = a_s$ and $A_{s, k} = A_s$ for $k \in [K]$, $a_{s, 0} = a_{s-1}$ and $A_{s, 0} = A_{s-1}$, then for any fixed $\vu \in \mathrm{dom}(g)$ we have
\begin{dgroup*}
    \begin{dmath*}
        \sum_{s=2}^S \sum_{k=1}^K \E \br{E_{s, k} (\vu)}
    \end{dmath*}
    \begin{dmath*}
        \le - \sum_{j=1}^m a_1 \inner{\nabla^{(j)} f (\vx_{1, 1}) - \nabla^{(j)} f (\vw_{1, 1, j})}{\vv_{1, 1}^{(j)} - \vu^{(j)}} + \sum_{j=1}^m a_{S} \E \br{\inner{\nabla^{(j)} f(\vx_{S, K}) - \nabla^{(j)} f (\vw_{S, K, j})}{\vv_{S, K}^{(j)} - \vu^{(j)}}} + \frac{K}{64} \norm{\vv_{1, 1} - \vv_{1, 0}}^2 - \frac{5 K \pr{1 + A_{S-1} \gamma}}{32} \E\br{\norm{\vv_{S, K} - \vv_{S, K-1}}^2},
    \end{dmath*}
\end{dgroup*}
where $\vx_{1, 1}, \vv_{1, 0} \in \mathrm{dom} (g)$ can be chosen arbitrarily and $\vw_{1, 1, j}$ is defined in Algorithm~\ref{alg:vr-acc-coder-analysis}.
\end{restatable}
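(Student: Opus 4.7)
The plan is to decompose each $E_{s,k}(\vu)$ into manageable pieces, bound each piece using Assumption~\ref{assmpt:Lip-const} together with the two inequalities from Lemma~\ref{lemma:Lip-quad-ub}, and then telescope in the double loop over $(s,k)$. The strategy parallels the single-loop proof of Lemma~\ref{lemma:acoder-gap-change}, but must additionally absorb the SVRG variance and the cross-epoch coupling introduced by the inner loop and the reference point $\vyt_{s-1}$.

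First I would rewrite $A_s(f(\vy_{s,k}) - f(\vx_{s,k})) - A_{s-1}(f(\vyt_{s-1}) - f(\vx_{s,k}))$ by applying convexity of $f$ at $\vyt_{s-1}$ and the quadratic upper bound from Lemma~\ref{lemma:Lip-quad-ub} at $\vy_{s,k}$, together with the convex combination $\vx_{s,k} = \frac{A_{s-1}}{A_s}\vyt_{s-1} + \frac{a_s}{A_s}\vv_{s,k-1}$ and the derived identity $\vy_{s,k} - \vx_{s,k} = \frac{a_s}{A_s}(\vv_{s,k} - \vv_{s,k-1})$ from the algorithm. This turns the function-value difference into an inner product with $\nabla f(\vx_{s,k})$ plus a quadratic $\frac{L a_s^2}{2 A_s}\|\vv_{s,k} - \vv_{s,k-1}\|^2$. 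Combined with the two $\vq_{s,k}$-inner-product terms in $E_{s,k}(\vu)$, the expression reorganizes block-wise into contributions of the form $a_s \sum_{j} \innp{\nabla^{(j)} f(\vx_{s,k}) - \vq_{s,k}^{(j)},\,\cdot\,}$, isolating the cyclic gradient error and the extrapolation correction as the only remaining nontrivial objects.

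Next I would substitute $\vq_{s,k}^{(j)} = \tilde{\nabla}_{s,k}^{(j)} + \frac{a_{s,k-1}}{a_s}\bigl(\nabla^{(j)} f_t(\vx_{s,k-1}) - \nabla^{(j)} f_t(\vw_{s,k-1,j})\bigr)$ and take the conditional expectation over the sampled index $t$, using unbiasedness $\E[\tilde{\nabla}_{s,k}^{(j)} \mid \vx_{s,k}] = \nabla^{(j)} f(\vw_{s,k,j})$. The key algebraic observation is that, after regrouping with $\vv_{s,k} - \vu = (\vv_{s,k-1} - \vu) + (\vv_{s,k} - \vv_{s,k-1})$, the extrapolation correction telescopes across the combined $(s,k)$ index: the contribution at $(s,k)$ with coefficient $a_{s,k-1}$ cancels the matching contribution at $(s,k-1)$. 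The only survivors are the boundary terms at $s=2$, $k=1$ (where $a_{2,0}=a_1$ and $\vv_{2,0}=\vv_{1,1}$, producing the stated $-a_1\langle \nabla^{(j)} f(\vx_{1,1}) - \nabla^{(j)} f(\vw_{1,1,j}),\vv_{1,1}^{(j)} - \vu^{(j)}\rangle$) and at $s=S$, $k=K$ (producing the $+a_S$ term inside the expectation). The initialization at the end of step~7 of Algorithm~\ref{alg:vr-acc-coder-implementable} is precisely what makes these boundary terms take the form stated in the lemma.

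The main obstacle, and the most technical step, will be to simultaneously control the residual cyclic error $\|\nabla^{(j)} f(\vw_{s,k,j}) - \nabla^{(j)} f(\vx_{s,k})\|^2$ and the SVRG variance $\E\|\tilde{\nabla}_{s,k}^{(j)} - \nabla^{(j)} f(\vw_{s,k,j})\|^2$, and to show that both can be absorbed into the negative $\|\vv_{s,k}-\vv_{s,k-1}\|^2$ term present inside each $E_{s,k}(\vu)$. For the cyclic error I would apply the block Lipschitz bound of Assumption~\ref{assmpt:Lip-const} and then sum over $j$ using the structural identity $L^2 = 2\|\tilde{\mQ}\|$, which converts $\sum_j \|\vw_{s,k,j} - \vx_{s,k}\|_{\mQ^j}^2$ into an $L$-scaled quadratic in $\|\vv_{s,k}-\vv_{s,k-1}\|$ via the relation between consecutive iterates. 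For the SVRG variance I would invoke the second inequality of Lemma~\ref{lemma:Lip-quad-ub} applied to each $f_t$, which yields a Bregman-gap bound that matches the function-value terms carried through from the first step and collapses under expectation. A Young's inequality balance, with coefficients dictated by the step-size condition $a_s^2 \le \frac{K A_{s-1}(1 + A_{s-1}\gamma)}{8 L}$ and $a_{s,k}=a_s$ for $k\in[K]$, then produces exactly the $\tfrac{K}{64}\|\vv_{1,1} - \vv_{1,0}\|^2$ and $-\tfrac{5K(1 + A_{S-1}\gamma)}{32}\|\vv_{S,K} - \vv_{S,K-1}\|^2$ residuals stated in the lemma, with all intermediate negative quadratics consumed in the telescoping.
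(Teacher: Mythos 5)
Your proposal follows essentially the same route as the paper: the paper likewise first reduces $E_{s,k}(\vu)$ via the quadratic bound of Lemma~\ref{lemma:Lip-quad-ub} (Lemma~\ref{lemma:E-bound-vr}), decomposes $a_s\E[\langle\nabla f(\vx_{s,k})-\vq_{s,k},\vv_{s,k}-\vu\rangle]$ by unbiasedness into telescoping cyclic-error inner products plus an extrapolation term and an SVRG term (Lemma~\ref{lemma:coder-gap-change-vr}), and absorbs the latter two via Young's inequality and the block Lipschitz structure into the negative $\|\vv_{s,k}-\vv_{s,k-1}\|^2$ and the retained Bregman gap $-A_{s-1}(f(\vyt_{s-1})-f(\vx_{s,k})-\langle\nabla f(\vx_{s,k}),\vyt_{s-1}-\vx_{s,k}\rangle)$ (Lemmas~\ref{lemma:vrcoder-inner-prod-bound-1} and~\ref{lemma:vrcoder-inner-prod-bound-2}). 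The one point to be careful about is your phrase ``applying convexity of $f$ at $\vyt_{s-1}$'': you must keep that Bregman divergence as a negative term rather than discarding it, since (as your final paragraph correctly anticipates) it is exactly what cancels the $\|\nabla f_t(\vx_{s,k})-\nabla f_t(\vyt_{s-1})\|^2$ part of the SVRG variance under the step-size condition.
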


Our main result for this section is summarized in the following theorem. 

\begin{restatable}{theorem}{thmvracodermain} \label{thm:vracoder-main}

Let $\vx_0 \in \mathrm{dom}(g)$ be an arbitrary initial point. Fix $K \ge 1$ and consider the updates in Algorithm~\ref{alg:vr-acc-coder-analysis}. Then for $S \ge 2$ and $\forall \vu \in \mathrm{dom}(g)$, we have

\begin{dmath*}
    \E \br{\bar{f} (\vyt_S) - \bar{f} (\vu)} + \frac{9 \pr{1 + A_{S-1} \gamma}}{64 A_S} \E \br{\norm{\vv_{S, K} - \vu}^2} \le \frac{5}{8 A_S} \norm{\vx_0 - \vu}^2.
\end{dmath*}

In particular if $\vx^* = \argmin_{\vx} \bar{f} (\vx)$ exists, then we have 
\begin{equation*}
    \E \br{\bar{f} (\vyt_S) - \bar{f} (\vx^*)} \le \frac{5}{8 A_S} \norm{\vx_0 - \vx^*}^2
\end{equation*}
and 
\begin{equation*}
    \E \br{\norm{\vv_{S, K} - \vx^*}^2} \le \frac{40}{9 \pr{1 + A_{S-1} \gamma}} \norm{\vx_0 - \vx^*}^2.
\end{equation*}

Finally in all the bounds above we have
\begin{equation*}
    A_S \ge \max \set{\frac{S^2 K}{64 L}, \frac{1}{4L} \pr{1 + \sqrt{\frac{K \gamma}{8 L}}}^{S-1}}.
\end{equation*}
\end{restatable}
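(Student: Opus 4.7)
The plan is to chain the gap bound of Lemma~\ref{lemma:a-coder-gap-fn-vr} with the error bound of Lemma~\ref{lemma:vrcoder-error-term-bound-combined}, then tame the two residual inner-product boundary terms (at $s=1$ and $s=S$) using the block Lipschitz structure supplied by Assumption~\ref{assmpt:Lip-const}. Dividing through by $KA_S$ will yield the main inequality, after which the two optimality-style bounds follow by standard specialization to $\vu=\vx^*$, and the lower bound on $A_S$ is obtained by unrolling the parameter recursion. Substituting Lemma~\ref{lemma:vrcoder-error-term-bound-combined} into Lemma~\ref{lemma:a-coder-gap-fn-vr} (after taking expectations), the two contributions to $\|\vv_{1,1}-\vv_{1,0}\|^2$, namely $-K/4$ and $+K/64$, combine into a reserve of $-15K/64$. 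What remains on the right-hand side is the template $\tfrac{K}{2}\|\vx_0-\vu\|^2-\tfrac{K(1+A_S\gamma)}{2}\E\|\vv_{S,K}-\vu\|^2$, a second reserve $-\tfrac{5K(1+A_{S-1}\gamma)}{32}\E\|\vv_{S,K}-\vv_{S,K-1}\|^2$, and the two boundary inner products.

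For the terminal boundary term at $s=S$, the key observation is that by the definition of $\vw_{S,K,j}$ in Algorithm~\ref{alg:vr-acc-coder-implementable}, the vector $\vx_{S,K}-\vw_{S,K,j}$ is supported only on blocks with index strictly greater than $j$, with entries $\vx_{S,K}^{(\ell)}-\vy_{S,K}^{(\ell)}$. Applying Assumption~\ref{assmpt:Lip-const} block-by-block, this support pattern engages exactly the $(\mQ^j)_{\ge j+1}$ piece of the definition of $\tilde{\mQ}$, so that $\sum_j\|\nabla^{(j)}f(\vx_{S,K})-\nabla^{(j)}f(\vw_{S,K,j})\|^2\le\|\tilde{\mQ}\|\,\|\vy_{S,K}-\vx_{S,K}\|^2=\tfrac{L^2}{2}\|\vy_{S,K}-\vx_{S,K}\|^2$. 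Combined with the convex-combination identity $\vy_{S,K}-\vx_{S,K}=\tfrac{a_S}{A_S}(\vv_{S,K}-\vv_{S,K-1})$, Cauchy--Schwarz, and a suitably weighted Young's inequality, the terminal term is dominated by a linear combination of $\E\|\vv_{S,K}-\vv_{S,K-1}\|^2$ and $\E\|\vv_{S,K}-\vu\|^2$. The parameter choice $a_S^2\le\tfrac{KA_{S-1}(1+A_{S-1}\gamma)}{8L}$ then lets both pieces be absorbed into the existing reserve and quadratic terms, leaving exactly the target coefficient $-\tfrac{9K(1+A_{S-1}\gamma)}{64}\E\|\vv_{S,K}-\vu\|^2$. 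The $s=1$ boundary term is controlled by the same template; because $\vx_{1,1}=\vx_0$ and $\vy_{1,1}=\vv_{1,1}$ by initialization and $a_1=\tfrac{1}{4L}$, its contributions split via $\|\vv_{1,1}-\vu\|\le\|\vv_{1,1}-\vv_{1,0}\|+\|\vx_0-\vu\|$ between the first reserve $-15K/64$ and a small slack in the constant on $\|\vx_0-\vu\|^2$ (moving it from $K/2$ up to $5K/8$).

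The main obstacle is precisely this constant bookkeeping: the Young splits in the two absorption steps must be chosen simultaneously so that (i) the coefficient of $\E\|\vv_{S,K}-\vv_{S,K-1}\|^2$ stays nonpositive, (ii) the coefficient of $\E\|\vv_{S,K}-\vu\|^2$ becomes exactly $-\tfrac{9K(1+A_{S-1}\gamma)}{64}$ after invoking $1+A_S\gamma\ge1+A_{S-1}\gamma$, and (iii) the coefficient of $\|\vx_0-\vu\|^2$ does not exceed $\tfrac{5K}{8}$. The restrictions on the Young parameters leave only a narrow admissible window, and one must check that this window is nonempty uniformly in the allowed $K\ge1$. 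Once the resulting inequality is in hand, dividing through by $KA_S$ gives the main claim; specializing $\vu=\vx^*$ and dropping the nonnegative distance term yields the primal-gap bound, while using $\bar{f}(\vyt_S)\ge\bar{f}(\vx^*)$ to drop the function-value term produces $\E\|\vv_{S,K}-\vx^*\|^2\le\tfrac{40}{9(1+A_{S-1}\gamma)}\|\vx_0-\vx^*\|^2$, using $(5/8)/(9/64)=40/9$.

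Finally, for the lower bound on $A_S$, the defining rule $a_s=\sqrt{KA_{s-1}(1+A_{s-1}\gamma)/(8L)}$ furnishes two estimates. In the strongly convex branch, $a_s\ge A_{s-1}\sqrt{K\gamma/(8L)}$ gives $A_s\ge A_{s-1}(1+\sqrt{K\gamma/(8L)})$, which iterated from $A_1=\tfrac{1}{4L}$ yields the claimed geometric lower bound. In the complementary branch, $\sqrt{A_s}-\sqrt{A_{s-1}}\ge a_s/(2\sqrt{A_s})\ge\sqrt{K/(32L)}$, and telescoping together with $A_1\ge\tfrac{1}{4L}$ gives $A_S\ge S^2K/(64L)$. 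Taking the maximum of the two bounds establishes the final inequality.
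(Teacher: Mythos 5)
Your proposal follows essentially the same route as the paper's proof: it chains Lemma~\ref{lemma:a-coder-gap-fn-vr} with Lemma~\ref{lemma:vrcoder-error-term-bound-combined}, absorbs the two boundary inner-product terms via the block-Lipschitz structure and Young's inequality exactly as the paper does in its analogues of Lemmas~\ref{lemma:vrcoder-inner-prod-bound-1}--\ref{lemma:vrcoder-inner-prod-bound-2} (with the same bookkeeping $-K/4+K/64=-15K/64$, $K/2+K/8=5K/8$, and $(5/8)/(9/64)=40/9$), and derives the growth of $A_S$ from the same recursion. The only blemish is the step $\frac{a_s}{2\sqrt{A_s}}\ge\sqrt{K/(32L)}$ in the non-strongly-convex branch, which as written needs $a_s\ge\sqrt{A_sK/(8L)}$ rather than the available $a_s\ge\sqrt{A_{s-1}K/(8L)}$; this is a standard, fixable constant-chasing issue that the paper itself treats with comparable looseness, and the slack in the target $S^2K/(64L)$ accommodates it.
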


Note that in Theorem~\ref{thm:vracoder-main}, we can set the number of inner iterations $K$ to be any positive integer. However, in order to balance the computational cost between the outer loop of each epoch and the inner loops, it is optimal to set $K = \Theta (n)$ and for simplicity we can set $K = n$. Therefore, the total number of arithmetic operations required to obtain an $\epsilon$-accurate solution $\vyt_S$ by applying Algorithm~\ref{alg:vr-acc-coder-implementable} such that $\E [\bar{f} (\vyt_S) - \bar{f} (\vx^*)] \le \epsilon$ is at most $O\Big(nd \sqrt{\frac{L \|\vx_0 - \vx^*\|}{n \epsilon}}\Big)$ for the general convex case when $\gamma = 0$, and $O\Big(\frac{n d \log (\epsilon L / \|\vx_0 - \vx^*\|)} {\log (1 + \sqrt{n \gamma / L})}\Big)$ for the strongly convex case when $\gamma > 0$. 

\paragraph{Adaptive VR-A-CODER.} Similar to A-CODER, VR-A-CODER can adaptively estimate the Lipschitz parameter. For completeness, we have included the adaptive version of VR-A-CODER in Algorithm~\ref{alg:vr-acc-coder-adaptive} (Appendix~\ref{appx:omitted-proofs-acoder-vr}). 

\section{Numerical Experiments and Discussion}\label{sec:num-exp}

To verify the effectiveness of our proposed algorithms, we conducted a set of numerical experiments to demonstrate that both A-CODER (Algorithm~\ref{alg:acc-coder}) and VR-A-CODER (Algorithm~\ref{alg:vr-acc-coder-implementable}) almost completely outperform other comparable block-coordinate descent methods in terms of both iteration count and wall-clock time. In particular, we compare against a number of representative methods: CODER \cite{song2021cyclic}, RCDM, ACDM \cite{nesterov2012efficiency}, ABCGD \cite{beck2013convergence} and APCG \cite{lin2015accelerated}. For all the methods, we use the function value gap $f(\vx) - f(\vx^*)$ as the performance measure and we plot our results against the total number of full-gradient evaluations and against wall-clock time in seconds. We implement our experiments in Julia, a high performance programming language designed for numerical analysis and computational science, while optimizing all implementations to the best of our ability. We set the block size to one in all the experiments, i.e., each block corresponds to one coordinate. We discussed in Section~\ref{sec:vr-acoder} that in theory it is optimal to choose $K = \Theta(n)$ in order to balance the computational costs of outer loop and inner loop in VR-A-CODER. We observed in our experiments that it is beneficial to choose $K$ to be slightly smaller than $n$ ($K \approx n/10$) to balance the computational time and the number of full-gradient evaluations.

We consider instances of $\ell_2$-norm (Ridge), $\ell_1$-norm (LASSO) ($\gamma = 0$) and elastic net ($\gamma > 0$) regularized logistic regression problems using three LIBSVM datasets: sonar, a1a and a9a. In the ridge regularized logistic regression problem ( (Figure~\ref{fig:numerical-results-0}), we use $\lambda_2 = 10^{-5}$ for sonar dataset and $\lambda_2 = 10^{-4}$ for a1a and a9a datasets. In the elastic net regularized logistic regression problem (Figure~\ref{fig:numerical-results-1}), we use $\lambda_1 = \lambda_2 = 10^{-5}$ for sonar dataset and $\lambda_1 = \lambda_2 = 10^{-4}$ for a1a and a9a datasets. In the $\ell_1$-norm regularized logistic regression problem (Figure~\ref{fig:numerical-results-2}), we use $\lambda_1 = 10^{-5}$ for sonar dataset and $\lambda_1 = 10^{-4}$ for a1a and a9a datasets. Figures~\ref{fig:numerical-results-1} and~\ref{fig:numerical-results-2}  provide performance comparisons between algorithms considered in terms of the number of full-gradient evaluations and wall-clock time for the elastic net regularized logistic regression problems. We search for the best $L$ or $M$ for each algorithm individually at intervals of $2^{i}$ for $i \in \mathbb{Z}$, and display the best performing runs in the plots. As predicted by our theoretical results, A-CODER and VR-A-CODER exhibit accelerated convergence rates and improved dependence on the number of blocks $m$ even in the worst case, outperforming all other algorithms. In terms of wall-clock time, due to different per-iteration cost of each algorithm in practice, we see a mildly different set of convergence behaviors. However, A-CODER and VR-A-CODER still both perform significantly better than comparable methods.

Combined with the best known theoretical convergence rates guarantee, we believe that this work provides strong supporting arguments for cyclic methods in modern machine learning applications.

\newpage
\bibliographystyle{apalike}
\bibliography{ref.bib}


\newpage
\appendix
\onecolumn

{
\centering \LARGE\bfseries Supplementary Material for Accelerated Cyclic Coordinate Dual Averaging with Extrapolation for Composite Convex Optimization
}

\paragraph{Outline.} 
The appendix of the paper is organized as follows:
\begin{itemize}
    \item Section~\ref{appx:omitted-proofs-acoder} presents the proofs related to the A-CODER algorithm in the main body of the paper, as well as the implementable and adaptive versions of A-CODER.
    \item Section~\ref{appx:omitted-proofs-acoder-vr} presents the proofs related to the A-CODER-VR algorithm in the main body of the paper. We also include the implementable and adaptive versions of A-CODER-VR in this section.
\end{itemize}

\section{Omitted Proofs and Pseudocode for A-CODER} \label{appx:omitted-proofs-acoder}

\lemmaLipquadub*
\begin{proof}
Let $\vz_{j} = (\vx^{(1)}, \ldots, \vx^{(j)}, \vy^{(j+1)},\ldots, \vy^{(m)})$ and observe that $\vz_{m} = \vx$ and $\vz_{0} = \vy$. Then we have
\begin{equation}
    f(\vy) - f(\vx) = \sum_{j=1}^m (f(\vz_{j-1}) - f(\vz_{ j})). 
\end{equation}
As $f$ is continuously differentiable and $\vz_{j}$ and $\vz_{j-1}$ only differ over the $j^{\mathrm{th}}$ block, we further have, by Taylor's theorem,
\begin{dgroup*}
\begin{dmath*}
    f(\vz_{j-1}) - f(\vz_{j}) = \int_0^1 \innp{\nabla f(\vz_{j} + t(\vz_{ j - 1} - \vz_{ j})), \vz_{ j - 1} - \vz_{ j}}\dd t
    = \int_0^1 \innp{\nabla^{(j)} f(\vz_{ j} + t(\vz_{ j - 1} - \vz_{ j})), \vy^{(j)} - \vx^{(j)}}\dd t
\end{dmath*}
\begin{dmath}\label{eq:f-cyc-change-1}
    = \innp{\nabla^{(j)} f(\vx), \vy^{(j)} - \vx^{(j)}} + \int_0^1 \innp{\nabla^{(j)} f(\vz_{j} + t(\vz_{j - 1} - \vz_{j})) - \nabla^{(j)} f(\vx), \vy^{(j)} - \vx^{(j)}}\dd t.
\end{dmath}
\end{dgroup*}
Using Young's inequality, we have, for any $\alpha > 0$,
\begin{dgroup*}
\begin{dmath*}
    \innp{\nabla^{(j)} f(\vz_{ j} + t(\vz_{ j - 1} - \vz_{ j})) - \nabla^{(j)} f(\vx), \vy^{(j)} - \vx^{(j)}}
\end{dmath*}
\begin{dmath*}    
    \leq  \frac{\alpha}{2}\|\nabla^{(j)} f(\vz_{ j} + t(\vz_{ j - 1} - \vz_{ j})) - \nabla^{(j)} f(\vx)\|^2 + \frac{1}{2\alpha}\|\vy^{(j)} - \vx^{(j)}\|^2
\end{dmath*}
\begin{dmath*}
    \leq \frac{\alpha}{2}\|\vz_{ j} + t(\vz_{ j - 1} - \vz_{ j}) - \vx\|_{\mQ^j}^2  + \frac{1}{2\alpha}\|\vy^{(j)} - \vx^{(j)}\|^2
\end{dmath*}
\begin{dmath*}
    \leq \frac{\alpha}{2}\Big[(1-t)\|\vz_{ j} - \vx\|_{\mQ^j}^2 + t\|\vz_{j-1} - \vx\|_{\mQ^j}^2\Big] + \frac{1}{2\alpha}\|\vy^{(j)} - \vx^{(j)}\|^2,
\end{dmath*}
\end{dgroup*}
where the second inequality is by our block Lipschitz assumption from Assumption~\ref{assmpt:Lip-const} and the last line is by Jensen's inequality. Now  observe that $\vz_{j}$ and $\vx$ agree on the first $j$ blocks.
Thus, we can write $\vz_j - \vx = (\vy - \vx)_{\ge j+1}$ and $\vz_{j-1} - \vx = (\vy - \vx)_{\ge j}$, while noting that we have $\norm{(\vy - \vx)_{\ge j}}_{Q_j}^2 = \norm{\vy - \vx}_{(Q_j)_{\ge j}}^2$ and $\norm{(\vy - \vx)_{\ge j+1}}_{Q_j}^2 = \norm{\vy - \vx}_{(Q_j)_{\ge j+1}}^2$. So by combining with Eq.~\eqref{eq:f-cyc-change-1} and integrating over $t$, we have, $\forall \alpha > 0$,
\begin{equation}\label{eq:f-cyc-change-2}
    \begin{aligned}
        f(\vz_{ j-1}) - f(\vz_{ j}) & \leq \innp{\nabla^{(j)} f(\vx), \vy^{(j)} - \vx^{(j)}} + \frac{1}{2\alpha}\|\vy^{(j)} - \vx^{(j)}\|^2\\
        & \quad + \frac{\alpha}{4}\Big(\|\vy - \vx\|_{(\mQ^j)_{\ge j}}^2 + \norm{\vy - \vx}_{(\mQ^j)_{\ge j+1}}^2\Big).
    \end{aligned}
\end{equation}
Summing Eq.~\eqref{eq:f-cyc-change-2} over $j \in [m]$ and using the definition of Mahalanobis norm, we finally get
\begin{align*}
    f(\vy) - f(\vx) =\; & \sum_{j=1}^m (f(\vz_{ j-1}) - f(\vz_{ j}))\\
    \leq \; & \innp{\nabla f(\vx), \vy - \vx} + \frac{\alpha}{4} (\vy - \vx)^T \tilde{\mQ} (\vy - \vx) + \frac{1}{2\alpha}\|\vy - \vx\|^2 \\
    \leq \; & \innp{\nabla f(\vx), \vy - \vx} + \pr{\frac{1}{2\alpha} + \frac{\alpha L^2}{8}} \norm{\vy - \vx}^2,
\end{align*}
where we used Holder's inequality and the definition of $L$ in Assumption~\ref{assmpt:Lip-const}. Letting $\alpha = \frac{2}{L}$ now completes the proof of the first part.

The second part of the proof is standard and is provided for completeness. Let $\vx, \vy$ be any two points from $\sR^d.$ Define $h_{\vx}(\vy) = f(\vy) - \innp{\nabla f(\vx), \vy}.$ Observe that $h_{\vx}(\vy)$ is convex (as the sum of a convex function $f(\vy)$ and a linear function $- \innp{\nabla f(\vx), \vy}$) and is minimized at $\vy = \vx$ (as for any $\vy \in \sR^d,$ $h_{\vx}(\vy) - h_{\vx}(\vx) = f(\vy) - f(\vx) - \innp{\nabla f(\vx), \vy - \vx} \geq 0,$ by convexity of $f$). Observe further that for any $\vy, \vz \in \sR^d$, we have
\begin{align*}
    h_{\vx}(\vy) - h_{\vx}(\vz) - \innp{\nabla h_{\vx}(\vz), \vy - \vz}
   =&\; f(\vy) - f(\vz) - \innp{\nabla f(\vz), \vy - \vz}\\
   \leq &\; \frac{L}{2} \|\vy - \vz\|^2,
\end{align*}
where the last inequality is by the first part of the proof.
The last inequality and the fact that $\vx$ minimizes $h_{\vx}$ now allow us to conclude that
\begin{align*}
    h_{\vx}(\vx) &\leq h_{\vx}\Big(\vy - \frac{1}{L}\nabla h_{\vx}(\vy)\Big)\\
    &\leq h_{\vx}(\vy) - \frac{1}{2 L}\|\nabla h_{\vx}(\vy)\|^2.
\end{align*}
To complete the proof, it remains to plug the definition of $h_x(\cdot)$ into the last inequality, and rearrange.
\end{proof}

\lemmaacodergapfn*

\begin{proof}
As $\vy_k = \frac{1}{A_k}\sum_{i=1}^k a_i \vv_i$ and $g$ is convex, we have $g(\vy_k)\le\frac{1}{A_k}\sum_{i=1}^k a_i g(\vv_i)$ and thus,
\begin{align}
    A_k \bar{f}(\vy_k) \le\;& A_k  f(\vy_k) + \sum_{i=1}^k a_i g(\vv_i) \nonumber\\ 
    =\;& \sum_{i=1}^k (A_i  f(\vy_i) - A_{i-1}  f(\vy_{i-1}))+ \sum_{i=1}^k a_i g(\vv_i),   \label{eq:upper}
\end{align}
where the equality is by $A_0 = 0$.
Then, as $f$ is convex and $\bar{f} = f + g,$ we have, $\forall \vu,$
\begin{align*}
    A_k   \bar{f}(\vu) =\; & \sum_{i=1}^k a_i \bar{f}(\vu)  \geq \sum_{i=1}^k a_i \big(f(\vx_i) + \innp{\nabla f(\vx_i), \vu - \vx_i} + g(\vu) \big)\\
    =\; & \sum_{i=1}^k a_i \big(f(\vx_i) + \innp{\vq_i, \vu - \vx_i} + g(\vu) \big)+ \sum_{i=1}^k a_i \innp{\nabla f(\vx_i) -\vq_i, \vu - \vx_i} \\
    =\; & \psi_k(\vu) - \psi_0(\vu) + \sum_{i=1}^k a_i \innp{\nabla f(\vx_i) -\vq_i, \vu - \vx_i} \\
    \ge \; &\psi_k(\vv_k) +  \frac{1 + A_k\gamma}{2}\|\vu - \vv_k\|^2 - \frac{1}{2}\|\vu - \vx_0\|^2  \\
     &   + \sum_{i=1}^k a_i \innp{\nabla f(\vx_i) -\vq_i, \vu - \vx_i},
\end{align*}
where the first inequality is by the convexity of $f$, the third equality is by the recursive definition of $\psi_k(\vu)$, and the last inequality is by the $(1 + A_k\gamma)$-strong convexity of $\psi_k(\vu)$, $\vv_k = \argmin_{\vu}\psi_k(\vu)$ which implies $\psi_k(\vu) \geq \psi_k(\vv_k) +  \frac{1 + A_k\gamma}{2}\|\vu - \vv_k\|^2$, and the definition of $\psi_0(\vu)$.  

Then as $\psi_0(\vv_0) = 0,$ using the recursive definition of $\psi_k,$ we have
\begin{dgroup*}
\begin{dmath*}
    \psi_k(\vv_k) = \sum_{i=1}^k (\psi_i(\vv_i) - \psi_{i-1}(\vv_{i-1}))
\end{dmath*}
\begin{dmath*}
    = \sum_{i=1}^k\Big( (\psi_{i-1}(\vv_i) - \psi_{i-1}(\vv_{i-1})) + a_i\big(f(\vx_i) + \innp{ \vq_i, \vv_i - \vx_i} + g(\vv_i) \big)\Big)
\end{dmath*}
\begin{dmath} \label{eq:lower2}
    \ge \sum_{i=1}^k \Big( \frac{1+A_{i-1}\gamma}{2}\|\vv_i - \vv_{i-1}\|^2 + a_i(f(\vx_i) + \innp{\vq_i, \vv_i - \vx_i} + g(\vv_i))\Big),
\end{dmath}
\end{dgroup*}
where the last inequality is by the $(1+A_{i-1}\gamma)$-strong convexity of $\psi_{i-1}$ and the optimality of $\vv_{i-1}$. 
Combining Eqs.~\eqref{eq:upper} and \eqref{eq:lower2}, we have %
\begin{align}
A_k(\bar{f}(\vy_k) -  \bar{f}(\vu)) \le \sum_{i=1}^k E_i(\vu) - \frac{1 + A_k\gamma}{2}\|\vu - \vv_k\|^2  + \frac{1}{2}\|\vu - \vx_0\|^2,
\end{align}
where $E_i(\vu)$ is defined in \eqref{eq:acc-E_i}. 
\end{proof}

\lemmaacodergapchange*

\begin{proof}
By the convexity of $f$, we have $$ f(\vy_{k-1}) - f(\vx_k) \geq \innp{\nabla f(\vx_k),  \vy_{k-1} - \vx_k}.$$ Then by applying Lemma~\ref{lemma:Lip-quad-ub}, we have
\begin{dgroup*}
    \begin{dmath*}
        A_k(f(\vy_k) - f(\vx_k)) -  A_{k-1}(f(\vy_{k-1})-f(\vx_k))
        \leq \innp{\nabla f(\vx_k), A_k \vy_k - A_{k-1}\vy_{k-1} - a_k \vx_k} + \frac{A_k L}{2}\|\vy_k - \vx_k\|^2
    \end{dmath*}
    \begin{dmath} \label{eq:acoder-change-in-fn}
        = a_k \innp{\nabla f(\vx_k), \vv_k - \vx_k} + \frac{{a_k}^2 L}{2 A_k} \|\vv_k - \vv_{k-1}\|^2,
    \end{dmath}
\end{dgroup*}
where we used the definitions of $\vy_k$ and $\vx_k$ from Algorithm~\ref{alg:acc-coder} in the last equality. 
Combining Eq.~\eqref{eq:acc-E_i} (with $i=k$) in Lemma~\ref{lemma:a-coder-gap-fn} and Eq.~\eqref{eq:acoder-change-in-fn}, we have
\begin{equation}\label{eq:acoder-change-in-gap-2}
    \begin{aligned}
        E_k(\vu) \le\; & \Big(\frac{{a_k}^2 L}{2 A_k} - \frac{1 + A_{k-1}\gamma}{2}\Big)\|\vv_k - \vv_{k-1}\|^2 
        + a_k \innp{\nabla f(\vx_k) - \vq_k, \vv_k - \vu}. 
\end{aligned}
\end{equation}
Thus by rewriting the second term as the sum of inner products over the $m$ blocks and by using the definition of $\vq_k^{(j)}$ in Algorithm~\ref{alg:acc-coder}, we have
\begin{dgroup*}
    \begin{dmath*}
    a_k \langle\nabla f(\vx_k) - \vq_k, \vv_k - \vu\rangle = a_k \sum_{j=1}^m \innp{\nabla^{(j)} f(\vx_k) - \vq_k^{(j)}, \vv_k^{(j)} - \vu^{(j)}}
    \end{dmath*}
    \begin{dmath} \label{eq:lemma-innerproduct-sum-1}
        = \sum_{j=1}^m \left[ a_k \innp{\nabla^{(j)} f(\vx_k) - \vp_k^{(j)}, \vv_k^{(j)} - \vu^{(j)}} - a_{k-1} \innp{\nabla^{(j)} f(\vx_{k-1}) - \vp_{k-1}^{(j)}, \vv_{k-1}^{(j)} - \vu^{(j)}}\right] + a_{k-1} \sum_{j=1}^m \innp{\nabla^{(j)} f(\vx_{k-1}) - \vp_{k-1}^{(j)}, \vv_{k-1}^{(j)} - \vv_{k}^{(j)}}.
    \end{dmath}
\end{dgroup*}
Notice that the first two inner product terms in the first line of Eq.~\eqref{eq:lemma-innerproduct-sum-1} telescope when summed over $k$, therefore it remains to bound $a_{k-1} \sum_{j=1}^m\innp{\nabla^{(j)} f(\vx_{k-1}) - \vp_{k-1}^{(j)}, \vv_{k}^{(j)} - \vv_{k-1}^{(j)}}$. In particular we let $\vw_{k, j} = (\vx^{1}_{k}, \ldots, \vx^{j}_{k}, \vy^{j+1}_{k},\ldots,   \vy^{m}_{k})$ so that $\vp_k^{(j)} = \nabla^{(j)} f(\vw_{k, j})$, then we have
\begin{dgroup*}
    \begin{dmath*}
        \innp{\nabla^{(j)} f (\vx_{k-1}) - \vp_{k-1}^{(j)}, \vv_{k-1}^{(j)} - \vv_k^{(j)}}
        = \innp{\nabla^{(j)} f (\vx_{k-1}) - \nabla^{(j)} f(\vw_{k-1, j}), \vv_{k-1}^{(j)} - \vv_k^{(j)}}
    \end{dmath*}
    \begin{dmath*}
        \le \frac{\alpha}{2} \norm{\nabla^{(j)} f(\vx_{k-1}) - \nabla^{(j)} f(\vw_{k-1, j})}^2 + \frac{1}{2 \alpha} \norm{\vv_{k-1}^{(j)} - \vv_k^{(j)}}^2
    \end{dmath*}
    \begin{dmath} \label{eq:lemma-innerproduct-sum-2}
        \le \frac{\alpha}{2} \norm{\vx_{k-1} - \vw_{k-1, j}}_{\mQ^j}^2 + \frac{1}{2 \alpha} \norm{\vv_{k-1}^{(j)} - \vv_k^{(j)}}^2
    \end{dmath}
\end{dgroup*}
where the first inequality holds for any $\alpha > 0$ by Young's inequality and the second inequality is by Assumption~\ref{assmpt:Lip-const}. Notice that $\vx_{k-1}$ and $\vw_{k-1, j}$ agree on the first $j$ blocks, so similar to the proof of Lemma~\ref{lemma:Lip-quad-ub} we can write $\vx_{k-1} - \vw_{k-1, j} = (\vy_{k-1} - \vx_{k-1})_{\ge j+1}$. Therefore by applying similar arguments as Lemma~\ref{lemma:Lip-quad-ub}, we get
\begin{dgroup*} 
    \begin{dmath*}
        \sum_{j=1}^m \norm{\vx_{k-1} - \vw_{k-1, j}}_{\mQ^j}^2 = \sum_{j=1}^m \norm{\vy_{k-1} - \vx_{k-1}}_{(\mQ^j)_{\ge j+1}}^2
    \end{dmath*}
    \begin{dmath*}
        \le \sum_{j=1}^m \norm{\vy_{k-1} - \vx_{k-1}}_{(\mQ^j)_{\ge j+1}}^2 + \sum_{j=1}^m \norm{\vy_{k-1} - \vx_{k-1}}_{(\mQ^j)_{\ge j}}^2 
    \end{dmath*}
    \begin{dmath*}
        = \norm{\vy_{k-1} - \vx_{k-1}}_{\tilde{\mQ}}^2
    \end{dmath*}
    \begin{dmath} \label{eq:lemma-innerproduct-sum-3}
        \le \frac{a_{k-1}^2 L^2}{2 A_{k-1}^2} \norm{\vv_{k-1} - \vv_{k-2}}^2
    \end{dmath}
\end{dgroup*}
where we used the non-negativity of Mahalanobis norm w.r.t. semi-positive definite matrix in the first inequality and the definition of $\vx_k$, $\vy_k$ and $L$ in the last inequality. Lastly, by combining Eqs.~\eqref{eq:acoder-change-in-gap-2}, \eqref{eq:lemma-innerproduct-sum-1}, \eqref{eq:lemma-innerproduct-sum-2} and \eqref{eq:lemma-innerproduct-sum-3}, we have
\begin{align*}
    E_k (\vu) \le \; & a_k \innp{\nabla f(\vx_k) - \vp_k, \vv_k - \vu} - a_{k-1}\innp{\nabla f(\vx_{k-1}) - \vp_{k-1}, \vv_{k-1} - \vu} \\
    & + \pr{\frac{a_k^2 L}{2 A_k} - \frac{1 + A_{k-1}\gamma}{2} + \frac{a_{k-1}}{2 \alpha}} \norm{\vv_k - \vv_{k-1}}^2 + \pr{\frac{\alpha a_{k-1}^3 L^2}{4 A_{k-1}^2}} \norm{\vv_{k-1} - \vv_{k-2}}^2.
\end{align*}
It remains to choose $\alpha = \frac{A_{k-1}}{a_{k-1} L}$ and some sequence $\set{a_i}_i^k$ such that $ \frac{a_k^2}{A_k} \le \frac{2\pr{1 + A_{k-1}\gamma}}{5 L}$.
\end{proof}

\thmacoder*

\begin{proof}
By Lemma \ref{lemma:acoder-gap-change}, and using the fact $A_0 = a_0 = 0$ and $\vv_0 = \vv_{-1}$, we have   
\begin{equation} \label{eq:acoder-fin-gap-bnd-1}
    \sum_{i=1}^k E_i(\vu) \leq - \frac{1 + A_{k-1}\gamma}{10}\|\vv_k - \vv_{k-1}\|^2  + a_k \innp{\nabla f(\vx_k) - \vp_k, \vv_k - \vu}.
\end{equation}
Same as in the proof of Lemma~\ref{lemma:acoder-gap-change}, we can bound $a_k \innp{\nabla f(\vx_k) - \vp_k, \vv_k - \vu}$ using Young's inequality and the definition of smoothness for $f.$ In particular, for any $\alpha > 0,$
\begin{equation}\notag
\begin{aligned}
    a_k \innp{\nabla f(\vx_k) - \vp_k, \vv_k - \vu} \leq \; & a_k\Big(\frac{\alpha L^2}{4}\|\vy_k - \vx_k\|^2 + \frac{1}{2\alpha}\|\vu - \vv_k\|^2\Big)\\
    =\; & a_k\Big(\frac{\alpha L^2 {a_k}^2}{4{A_k}^2}\|\vv_k - \vv_{k-1}\|^2 + \frac{1}{2\alpha}\|\vu - \vv_k\|^2\Big).
\end{aligned}
\end{equation}
Choosing $\alpha = \frac{A_k}{a_k L}$ and using $\frac{{a_k}^2}{A_k} \le \frac{2 \pr{1 + A_{k-1}\gamma}}{5 L}$ , we get
\begin{equation} \label{eq:acoder-inn-prod-term}
    a_k \innp{\nabla f(\vx_k) - \vp_k, \vv_k - \vu} \leq \frac{\pr{1 + A_{k-1}\gamma}}{10} \|\vv_k - \vv_{k-1}\|^2 + \frac{\pr{1 + A_{k-1}\gamma}}{5} \|\vu - \vv_k\|^2.
\end{equation}
Then combining Lemma \ref{lemma:a-coder-gap-fn}, Eq.~\eqref{eq:acoder-inn-prod-term} and Eq.~\eqref{eq:acoder-fin-gap-bnd-1} with the fact $A_{k-1}\le A_k$, we have
\begin{align}
    (\bar{f}(\vy_k) - \bar{f}(\vu))   + \frac{3 (1+ A_{k-1}\gamma)}{10 A_k}\|\vu - \vv_k\|^2  \le \frac{1}{2A_k}\|\vu - \vx_0\|^2. \label{eq:acc-bound}
\end{align}

Assume now that $\vx^* = \argmin_{\vx}\bar{f}(\vx)$ exists. As $\bar{f}(\vy_k) - \bar{f}(\vx^*)\geq 0,$ Eq.~\eqref{eq:acc-bound} implies
\begin{equation}\label{eq:vv_k-dist-bnd}
    \|\vv_k - \vx^*\|^2 \leq \frac{5}{3(1 + A_{k-1} \gamma)} \|\vx_0 - \vx^*\|^2.
\end{equation}
Using Jensen's inequality, as $\vy_k = \frac{1}{A_k}\sum_{i=1}^k a_i \vv_i,$ we also have from Eq.~\eqref{eq:vv_k-dist-bnd}
\begin{align*}
    \|\vy_k - \vx^*\|^2 \leq \pr{\frac{5}{3 A_k} \sum_{i=1}^k \frac{a_i}{1 + A_{i-1} \gamma}} \|\vx_0 - \vx^*\|^2.
\end{align*}

Finally, recall once again that $\{a_k\}_{k\geq 1}$ is chosen so that $\frac{{a_k}^2}{A_k} = \frac{2(1 + A_{k-1}\gamma)}{5 L}.$ When $\gamma = 0,$ this leads to the standard $A_k \geq \frac{k^2}{10 L}$ growth of accelerated algorithms by choosing $a_k = \frac{k}{5 L}$ for $k \geq 1.$ When $\gamma > 0,$ we have $\frac{a_k}{A_{k-1}} > \sqrt{\frac{2 \gamma}{5 L}},$ and it remains to use that $A_k = \frac{A_k}{A_{k-1}}\cdot \dots \cdot \frac{A_2}{A_1}\cdot A_1 = A_1 \Big(1 + \sqrt{\frac{2 \gamma}{5 L}}\Big)^{k-1}$ where $a_1 = A_1 = \frac{2}{5 L}$ using the choice of $a_k$ in Algorithm~\ref{alg:acc-coder} and $A_0 = a_0 = 0$, completing the proof.
\end{proof}

\subsection{(Lipschitz) Parameter-Free  A-CODER}\label{appx-subsection:acoder-adaptive}

Similar to CODER, it is possible to adaptively estimate the Lipschitz parameter $L_k$ for A-CODER. Note that in the case of A-CODER, all that is needed for the analysis from Section~\ref{sec:acoder} to apply is that the quadratic bound from Lemma~\ref{lemma:Lip-quad-ub} holds between $\vx_k$ and $\vy_k.$ A variant of A-CODER that implements this adaptive estimation is provided in Algorithm~\ref{alg:acc-coder-adaptive}.
%
%
\begin{algorithm*}[t!]
\caption{Adaptive Accelerated Cyclic cOordinate Dual avEraging with extRapolation (Ada-A-CODER)} \label{alg:acc-coder-adaptive}
\begin{algorithmic}[1]
\STATE \textbf{Input:} $\vx_0\in\mathrm{dom}(g), \gamma \geq 0, L_0 > 0, m, \{S^1, \dots, S^m\}$
\STATE \textbf{Initialization:} $\vx_{-1} = \vx_{0} = \vv_{-1} = \vv_0 = \vy_0, \vp_0 = \nabla f(\vx_0), \vz_0 = \vzero$, $a_0= A_0 = 0$
\FOR {$k = 1$ to $K$} 
    \STATE $L_k = L_{k-1} / 2$
    \REPEAT
        \STATE $L_k = 2 L_k$
        \STATE Set $a_k > 0$ be largest value s.t. $\frac{a_k^2}{A_{k}} \le \frac{2 \pr{1 + A_{k-1} \gamma}}{5 L_k}$ where $A_k = A_{k-1} + a_k$
        \STATE $\vx_k = \frac{A_{k-1}}{A_k}\vy_{k-1} + \frac{a_k}{A_k}\vv_{k-1}$
        \FOR {$j = m$ to $1$}
            \STATE $\vp_k^{(j)} = \nabla^{(j)} f(\vx^{(1)}_{k}, \ldots, \vx^{(j)}_{k}, \vy^{(j+1)}_{k},\ldots,   \vy^{(m)}_{k})$
            \STATE $\vq^{(j)}_k = \vp_k^{(j)} + \frac{a_{k-1}}{a_k}(\nabla^{(j)}f(\vx_{k-1}) - \vp_{k-1}^{(j)})$
            \STATE $\vz_k^{(j)} = \vz_{k-1}^{(j)} + a_k\vq^{(j)}_k$
            \STATE $\vv_{k}^{(j)} = \mathrm{prox}_{A_k g^j}(\vx_0^{(j)} - \vz_k^{(j)})$
            \STATE $\vy_k^{(j)} = \frac{A_{k-1}}{A_k}\vy_{k-1}^{(j)} + \frac{a_k}{A_k}\vv_k^{(j)}$
        \ENDFOR
    \UNTIL{$f(\vy_k) \le f(\vx_k) + \inner{\nabla f(\vx_k)}{\vy_k - \vx_k} + \frac{L_k}{2} \norm{\vy_k - \vx_k}^2$}
\ENDFOR
\STATE \textbf{return} $\vv_K, \vy_K$
\end{algorithmic}	
\end{algorithm*}
%
%

\section{Omitted Proofs and Pseudocode for ACODER-VR} \label{appx:omitted-proofs-acoder-vr}

\begin{algorithm*}[t!]
\caption{Variance Reduced A-CODER (Analysis Version)}\label{alg:vr-acc-coder-analysis}
\begin{algorithmic}[1]
\STATE \textbf{Input:} $\vx_0\in\mathrm{dom}(g), \gamma \geq 0, L>0, m, \{\gS^1, \dots, \gS^m\}$
\STATE \textbf{Initialization:} $\vyt_0 = \vv_{1,0} = \vy_{1,0} = \vx_{1, 1} = \vx_{0}$
\STATE $a_0= A_0 = 0$; $A_1 = a_1 = \frac{1}{4L}$
\STATE $\psi_{1,0}(\cdot) = \frac{K}{2}\|\cdot - \vx_0\|^2$ 
\STATE $\vv_{1,1} = \argmin_{\vv}\{\psi_{1,1}(\vv) := \psi_{1,0}(\vv) + K a_1(f(\vx_0) + \langle \nabla f(\vx_0), \vv - \vx_0 \rangle + g(\vv))\}$
\STATE $\vw_{1, 1, j} = (\vx_{1, 1}^{(1)}, \ldots, \vx^{(j)}_{1, 1}, \vy^{(j+1})_{1, 1},\ldots, \vy^{(m)}_{1, 1})$
\STATE $\vyt_1 = \vv_{2,0} = \vy_{1, 1} = \vv_{1,1}$; $\vw_{2, 0, j} = \vw_{1, 1, j}$; $\psi_{2,0} = \psi_{1,1}$
\FOR{$s= 2$ to $S$}
    \STATE Set $a_s > 0$ s.t. $a_s^2 = \frac{K A_{s-1} \pr{1 + A_{s-1} \gamma}}{8 L}$; $A_s = A_{s-1} + a_s$  
    \STATE $a_{s, 0} = a_{s-1}$; $a_{s,1}=a_{s,2}=\cdots =a_{s,K} = a_s$
    \STATE $\vx_{s, 0} = \vx_{s-1, K}$; $\vy_{s, 0} = \vx_{s-1, K}$; $\vw_{s, 0, j} = \vw_{s-1, K, j}$; $\vv_{s, 0} = \vv_{s-1, K}$; $\psi_{s, 0} = \psi_{s-1, K}$
    \STATE $\vmu_s = \nabla f(\vyt_{s-1})$
    \FOR{$k = 1$ to $K$} 
        \STATE $\vx_{s,k} = \frac{A_{s-1}}{A_s}\vyt_{s-1} + \frac{a_s}{A_s}\vv_{s,k-1}$
        \FOR{$j = m$ to $1$} %
            \STATE $\vw_{s, k, j} = (\vx^{(1)}_{s,k}, \ldots, \vx^{(j)}_{s,k}, \vy^{(j+1)}_{s,k},\ldots, \vy^{(m)}_{s,k})$
            \STATE Choose $t$ in $[n]$ uniformly at random
            \STATE $\tilde{\nabla}_{s,k}^{(j)} = \nabla^{(j)} f_t(\vw_{s, k, j}) - \nabla^{(j)} f_t(\vyt_{s-1}) + \vmu_s^{(j)}$
            \STATE $\vq^{(j)}_{s,k} = \tilde{\nabla}_{s,k}^{(j)} + \frac{a_{s,k-1}}{a_s}(\nabla^{(j)} f_t(\vx_{s,k-1}) - \nabla^{(j)} f_t(\vw_{s, k-1, j}))$
            \STATE $\vv_{s,k}^{(j)} = \argmin_{\vv^{(j)} \in \sR^{d^j}}\{\psi^j_{s,k}(\vv^{(j)}) := \psi^j_{s,k-1}(\vv^j) + a_s (  \frac{1}{m}f(\vx_{s,k}) + \langle \vq^{(j)}_{s,k}, \vv^{(j)} - \vy_{s,k-1}^{(j)} \rangle  + g^j (\vv^{(j)}))\}$
            \STATE $\vy_{s,k}^{(j)} = \frac{A_{s-1}}{A_s} \vyt_{s-1}^{(j)} + \frac{a_s}{A_s}\vv_{s,k}^{(j)}$
        \ENDFOR
    \ENDFOR
    \STATE $\vyt_s = \frac{1}{K}\sum_{k=1}^K \vy_{s,k}$
\ENDFOR
\STATE \textbf{return} $\vv_{S, L}, \vyt_S$
\end{algorithmic}	
\end{algorithm*}

\lemvrsmooth*

\begin{proof}
By using Jensen's inequality and Assumption~\ref{assmpt:average-smooth}, we have
\begin{equation*}
    \norm{\nabla^{(j)} f(\vx) - \nabla^{(j)} f(\vy)}^2
    \le \frac{1}{n} \sum_{t=1}^n \norm{\nabla^{(j)} f_t(\vx) - \nabla^{(j)} f_t(\vy)}^2
    \le \norm{\vx - \vy}_{\mQ^j}^2.
\end{equation*}
\end{proof}

\lemmaacodergapfnvr*

\begin{proof}

As $f$ is convex and $\bar{f} = f + g,$ we have: $\forall \vu$,
\begin{dgroup*}
    \begin{dmath*}
        K A_S\bar{f}(\vu) = \sum_{s=1}^S \sum_{k=1}^K a_s \bar{f}(\vu)
    \end{dmath*}
    \begin{dmath*}
        \geq Ka_1\big(f(\vx_{1,1}) + \innp{\nabla f(\vx_{1,1}), \vu - \vx_{1,1}} + g(\vu) \big) \\
        + \sum_{s=2}^S  \sum_{k=1}^K a_s \big(f(\vx_{s,k}) + \innp{\nabla f(\vx_{s,k}), \vu - \vx_{s,k}} + g(\vu) \big)
    \end{dmath*}
    \begin{dmath*}
    =\psi_{S,K}(\vu) - \psi_{1,0}(\vu)  + \sum_{s=2}^S  \sum_{k=1}^K a_s \innp{\nabla f(\vx_{s,k}) -\vq_{s,k}, \vu - \vx_{s,k}}
    \end{dmath*}
    \begin{dmath} \label{eq:a-coder-gap-fn-vr-1}
        \ge \psi_{S,K}(\vv_{S,K})  +  \frac{K(1 +  A_S\gamma)}{2}\|\vu - \vv_{S,K}\|^2  -  \frac{K}{2}\|\vx_0 - \vu\|^2 \\
        + \sum_{s=2}^S \sum_{k=1}^K a_s \innp{\nabla f(\vx_{s,k}) -\vq_{s,k}, \vu - \vx_{s,k}},
    \end{dmath}
\end{dgroup*}
where the first inequality is by the convexity of $f$, the second equality is by the recursive definition of $\psi_{S,K}(\vu)$, the second inequality is by the $ K(1 + A_S\gamma)$-strong convexity of $\psi_{S,K}(\vu)$ and $\vv_{S,K} = \argmin_{\vu}\psi_{S,K}(\vu)$ leading to $\psi_{S,K}(\vu) \geq \psi_{S,K}(\vv_{S,K}) +  \frac{K(1 +  A_S\gamma)}{2}\|\vu - \vv_{S,K}\|^2$.

Then using our recursive definition of the estimate sequences again, we have
\begin{align}
\;& \psi_{S,K}(\vv_{S,K}) \nonumber\\
=\;& \psi_{1,1}(\vv_{1,1})  + \sum_{s=2}^S \sum_{k=1}^K (\psi_{s,k}(\vv_{s,k}) - \psi_{s,k-1}(\vv_{s,k-1}))  \nonumber \\ 
=\;& \psi_{1,1}(\vv_{1,1})+\sum_{s=2}^S \sum_{k=1}^K\big(\psi_{s,k-1}(\vv_{s,k}) - \psi_{s,k-1}(\vv_{s,k-1})\big)\nonumber \\ 
&\;+ \sum_{s=2}^S \sum_{k=1}^K  a_s(f(\vx_{s,k}) + \innp{\vq_{s,k}, \vv_{s,k} - \vx_{s,k}} + g(\vv_{s,k}))     \nonumber\\
\ge\;&  \frac{K}{2}\|\vv_{1,1} - \vv_{1,0}\|^2  +  K a_1(f(\vx_{1,1}) + \langle \nabla f(\vx_{1,1}), \vv_{1,1} - \vx_{1,1} \rangle + g(\vv_{1,1}))  \nonumber \\ 
&\;  +  \sum_{s=2}^S  \sum_{k=1}^K  \frac{K(1 + A_{s-1}\gamma)}{2}\|\vv_{s,k} - \vv_{s,k-1}\|^2  \nonumber \\ 
&\; + \sum_{s=2}^S  \sum_{k=1}^K  a_s(f(\vx_{s,k}) + \innp{\vq_{s,k}, \vv_{s,k} - \vx_{s,k}} + g(\vv_{s,k})), \label{eq:a-coder-gap-fn-vr-2}
\end{align}
where the first equality is by $\psi_{s+1, 0 } = \psi_{s, K}$ and $\vv_{s+1,0} = \vv_{s,K}$, the second equality is by the definition of $\psi_{s,k}$, 
the last inequality is by the definition of $\psi_{1,1}(\vv_{1,1})$ and the $K(1+A_{s-1}\gamma)$-strong convexity of $\psi_{s,k-1} (s\ge 2, k\in[K]).$   
Then by Lemmas \ref{lem:vr-smooth} and \ref{lemma:Lip-quad-ub}, we have
\begin{align}
    f(\vv_{1,1}) \le\;&      f(\vx_{1,1}) + \langle \nabla f(\vx_{1,1}), \vv_{1,1} - \vx_{1,1} \rangle + \frac{L}{2} \|\vv_{1, 1} - \vx_{1,1}\|^2 \nonumber  \\ 
    \le\;&      f(\vx_{1,1}) + \langle \nabla f(\vx_{1,1}), \vv_{1,1} - \vx_{1,1} \rangle +  \frac{1}{4a_1} \|\vv_{1,1} - \vv_{1,0}\|^2, \label{eq:a-coder-gap-fn-vr-3}
\end{align}
where the last inequality is by $a_1 \le \frac{1}{4L}$ and $\vv_{1,0} = \vx_{1,1}$.

Using $\vyt_s = \frac{1}{K}\sum_{k=1}^K \vy_{s,k} = \frac{A_{s-1}}{A_s}\vyt_{s-1} +  \frac{a_s}{KA_s}\sum_{k=1}^K \vv_{s,k}$, the convexity of $g$, and $A_0 = 0$, we have  
\begin{align}
\sum_{s=2}^S \sum_{k=1}^K  a_s g(\vv_{s,k}) \ge\;& \sum_{s=2}^S K a_s g\Big(\frac{1}{K} \sum_{k=1}^K \vv_{s,k}\Big) \ge \sum_{s=2}^S\big( KA_s g(\vyt_s) - KA_{s-1}g(\vyt_{s-1})\big) \nonumber\\
=\;& KA_S g(\vyt_S) - KA_1 g(\vyt_1) \nonumber\\
=\;& KA_S g(\vyt_S) - KA_1 g(\vv_{1,1}).
\label{eq:a-coder-gap-fn-vr-4}  
\end{align}
Thus, combining Eqs.~\eqref{eq:a-coder-gap-fn-vr-1}--\eqref{eq:a-coder-gap-fn-vr-4}, we have
\begin{dmath} \label{eq:a-coder-gap-fn-vr-5}
    KA_S\bar{f}(\vu) \ge \frac{K(1 +  A_S\gamma)}{2}\|\vu - \vv_{S,K}\|^2  - \frac{K}{2}\|\vx_0 - \vu\|^2 + \frac{K}{4}\|\vv_{1,1} - \vv_{1,0}\|^2 + K a_1 f(\vv_{1,1}) \\
    + \sum_{s=2}^S \sum_{k=1}^K \Big( a_s \innp{\nabla f(\vx_{s,k}) -\vq_{s,k}, \vu - \vx_{s,k}} +  \frac{K(1 + A_{s-1}\gamma)}{2}\|\vv_{s,k} - \vv_{s,k-1}\|^2 \Big) \\
    + \sum_{s=2}^S  \sum_{k=1}^K  a_s(f(\vx_{s,k}) + \innp{\vq_{s,k}, \vv_{s,k} - \vx_{s,k}}) + KA_S g(\vyt_S). 
\end{dmath}
Then with $A_0=0$ and $\vyt_s = \frac{1}{K}\sum_{k=1}^K \vy_{s,k}$, we also have 
\begin{align}
KA_Sf(\vyt_S) =\;&  KA_1f(\vyt_1) +  K \sum_{s=2}^S (A_sf(\vyt_s) - A_{s-1}f(\vyt_{s-1})) \nonumber\\
            \le\;& K a_1 f(\vv_{1,1}) +
            \sum_{s=2}^S \sum_{k=1}^K A_sf(\vy_{s,k}) - K \sum_{s=2}^SA_{s-1}f(\vyt_{s-1}),
            \label{eq:a-coder-gap-fn-vr-6} 
\end{align}
where the last equality is by $A_1 = a_{1}, \vyt_{1} = \vv_{1,1}$. Subtracting Eq.~\eqref{eq:a-coder-gap-fn-vr-5} from \eqref{eq:a-coder-gap-fn-vr-6}  and noting that $ \bar{f}(\vyt_S) = f(\vyt_S) + g(\vyt_S)$ now completes the proof.    
\end{proof}

\begin{lemma}\label{lemma:E-bound-vr}
The error sequence $\{E_{s,k}(\vu)\}_{s\ge 2, k\in[K]}$ in Lemma~\ref{lemma:a-coder-gap-fn-vr} satisfies
\begin{dmath*}
    E_{s,k}(\vu) \le -A_{s-1}\big(f(\vyt_{s-1}) - f(\vx_{s, k}) - \big\langle \nabla  f(\vx_{s,k}), \vyt_{s-1} - \vx_{s,k}\big\rangle\big) + a_s \langle \nabla f(\vx_{s,k}) - \vq_{s,k}, \vv_{s,k} - \vu\rangle  +\Big(\frac{L{a_s}^2}{2A_s} - \frac{K(1 + A_{s-1}\gamma)}{2}\Big) \|\vv_{s,k} - \vv_{s,k-1}\|^2.
\end{dmath*}
\end{lemma}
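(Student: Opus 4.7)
The plan is to mirror the argument of Lemma~\ref{lemma:acoder-gap-change}, adapted to the variance-reduced estimate sequence. The key is to rewrite the ``function change'' portion of $E_{s,k}(\vu)$, namely
$A_s(f(\vy_{s,k})-f(\vx_{s,k})) - A_{s-1}(f(\vyt_{s-1})-f(\vx_{s,k}))$, in a form that produces both the Bregman-like term on $\vyt_{s-1}$ and the quadratic term in $\|\vv_{s,k}-\vv_{s,k-1}\|^2$ with the right constants, while keeping a linear term that combines cleanly with the extrapolation inner products already present in $E_{s,k}(\vu)$.

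First, I will apply the smoothness upper bound from Lemma~\ref{lemma:Lip-quad-ub} (valid via Lemma~\ref{lem:vr-smooth}) to get
\[
A_s(f(\vy_{s,k})-f(\vx_{s,k})) \le A_s\inner{\nabla f(\vx_{s,k})}{\vy_{s,k}-\vx_{s,k}} + \tfrac{A_sL}{2}\|\vy_{s,k}-\vx_{s,k}\|^2.
\]
Then, using the definitions $\vy_{s,k} = \tfrac{A_{s-1}}{A_s}\vyt_{s-1} + \tfrac{a_s}{A_s}\vv_{s,k}$ and $\vx_{s,k} = \tfrac{A_{s-1}}{A_s}\vyt_{s-1} + \tfrac{a_s}{A_s}\vv_{s,k-1}$ from Algorithm~\ref{alg:vr-acc-coder-analysis}, I get the key identity $\vy_{s,k}-\vx_{s,k} = \tfrac{a_s}{A_s}(\vv_{s,k}-\vv_{s,k-1})$, which turns the bound into
\[
A_s(f(\vy_{s,k})-f(\vx_{s,k})) \le a_s\inner{\nabla f(\vx_{s,k})}{\vv_{s,k}-\vv_{s,k-1}} + \tfrac{L a_s^2}{2A_s}\|\vv_{s,k}-\vv_{s,k-1}\|^2.
\]

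Second, for the $-A_{s-1}(f(\vyt_{s-1})-f(\vx_{s,k}))$ contribution I will add and subtract the linear piece $A_{s-1}\inner{\nabla f(\vx_{s,k})}{\vyt_{s-1}-\vx_{s,k}}$ to isolate the desired Bregman-type term $-A_{s-1}(f(\vyt_{s-1})-f(\vx_{s,k})-\inner{\nabla f(\vx_{s,k})}{\vyt_{s-1}-\vx_{s,k}})$. The remaining linear piece simplifies via the identity $A_{s-1}(\vyt_{s-1}-\vx_{s,k}) = a_s(\vx_{s,k}-\vv_{s,k-1})$, which follows directly from the convex-combination definition of $\vx_{s,k}$. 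Adding this to the $a_s\inner{\nabla f(\vx_{s,k})}{\vv_{s,k}-\vv_{s,k-1}}$ term from the previous step collapses to $a_s\inner{\nabla f(\vx_{s,k})}{\vv_{s,k}-\vx_{s,k}}$ after canceling $\vv_{s,k-1}$.

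Finally, I substitute this bound for the two function-difference terms back into the definition of $E_{s,k}(\vu)$ in \eqref{eq:E-def}. The $a_s\inner{\nabla f(\vx_{s,k})}{\vv_{s,k}-\vx_{s,k}}$ combines with $a_s\inner{\vq_{s,k}}{\vx_{s,k}-\vv_{s,k}}$ to give $a_s\inner{\nabla f(\vx_{s,k})-\vq_{s,k}}{\vv_{s,k}-\vx_{s,k}}$, which in turn merges with $a_s\inner{\nabla f(\vx_{s,k})-\vq_{s,k}}{\vx_{s,k}-\vu}$ to produce exactly $a_s\inner{\nabla f(\vx_{s,k})-\vq_{s,k}}{\vv_{s,k}-\vu}$. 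The quadratic coefficient $\tfrac{La_s^2}{2A_s}-\tfrac{K(1+A_{s-1}\gamma)}{2}$ emerges automatically, completing the proof. There is no real obstacle here beyond tracking the algebra carefully; the only subtle observation is that adding and subtracting the correct linear form in $\vyt_{s-1}-\vx_{s,k}$ is what enables both (i) surfacing the Bregman-type term on $\vyt_{s-1}$ (which will later be canceled by variance-reduction expectations in Lemma~\ref{lemma:vrcoder-error-term-bound-combined}) and (ii) producing the clean inner product $\inner{\nabla f(\vx_{s,k})-\vq_{s,k}}{\vv_{s,k}-\vu}$ needed for the telescoping/extrapolation argument.
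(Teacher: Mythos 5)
Your proposal is correct and follows essentially the same route as the paper's proof: apply the smoothness upper bound of Lemma~\ref{lemma:Lip-quad-ub} to $f(\vy_{s,k})-f(\vx_{s,k})$, exploit the convex-combination definitions of $\vy_{s,k}$ and $\vx_{s,k}$ (the paper splits the linear term directly as $\frac{A_{s-1}}{A_s}\innp{\nabla f(\vx_{s,k}),\vyt_{s-1}-\vx_{s,k}}+\frac{a_s}{A_s}\innp{\nabla f(\vx_{s,k}),\vv_{s,k}-\vx_{s,k}}$, whereas you reach the same expression by first writing $\vy_{s,k}-\vx_{s,k}=\frac{a_s}{A_s}(\vv_{s,k}-\vv_{s,k-1})$ and then using $A_{s-1}(\vyt_{s-1}-\vx_{s,k})=a_s(\vx_{s,k}-\vv_{s,k-1})$), and substitute into the definition of $E_{s,k}(\vu)$ in \eqref{eq:E-def}. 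The two derivations are algebraically equivalent rearrangements of the same argument, and your bookkeeping of the inner products and the quadratic coefficient is accurate.
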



\begin{proof}
%
%
Using Assumption~\ref{assmpt:average-smooth}, Lemma~\ref{lem:vr-smooth}, and Lemma~\ref{lemma:Lip-quad-ub}, and applying the definition of $\vy_{s, k},$ we have
\begin{dgroup*}
    \begin{dmath*}
        f(\vy_{s,k}) - f(\vx_{s,k})
    \end{dmath*}
    \begin{dmath*}
        \le \langle \nabla  f(\vx_{s,k}), \vy_{s,k} - \vx_{s,k}\rangle + \frac{L}{2} \| \vy_{s,k} -  \vx_{s,k}\|^2
    \end{dmath*}
    \begin{dmath*}
        = \big\langle \nabla  f(\vx_{s,k}),  \frac{A_{s-1}}{A_s}\vyt_{s-1} + \frac{a_s}{A_s}\vv_{k,s}   - \vx_{s,k}\big\rangle   + \frac{L{a_s}^2}{2 {A_s}^2}\|\vv_{s,k} - \vv_{s,k-1}\|^2
    \end{dmath*}
    \begin{dmath} \label{eq:E-bound-vr-1}
        = \frac{A_{s-1}}{A_s} \big\langle \nabla  f(\vx_{s,k}), \vyt_{s-1}  - \vx_{s,k}\big\rangle   +  \frac{a_{s}}{A_s} \big\langle \nabla  f(\vx_{s,k}), \vv_{s,k}  - \vx_{s,k}\big\rangle + \frac{L{a_s}^2}{2 {A_s}^2}\|\vv_{s,k} - \vv_{s,k-1}\|^2.
    \end{dmath}
\end{dgroup*}

It remains to plug Eq.~\eqref{eq:E-bound-vr-1} into the definition of $E_{s,k}(\vu)$, and rearrange.
\end{proof}

The definition of the variance reduced extrapolation point $\vq_{s,k}$ is crucial for bounding the error terms $\{E_{s,k}(\vu)\}$ from Lemma~\ref{lemma:a-coder-gap-fn-vr}.
The next three auxiliary lemmas apply the definition of $\vq_{s,k}^{(j)}$ to bound the inner product term $\langle \nabla f(\vx_{s,k}) - \vq_{s,k}, \vv_{s,k} - \vu\rangle$ in $E_{s,k}(\vu)$ when we take the expectation over all randomness in the algorithm.
We will use $\gF_{s, k, i}$ to denote the natural filtration, containing all randomness up to and including epoch $s$, outer iteration $k$, and inner iteration $i$. Note that in Algorithm~\ref{alg:vr-acc-coder-analysis}, the index of the inner iteration goes from $j = m$ to $1$, therefore inner iteration $i$ corresponds to when index of the inner iteration is $j = m - i + 1$. This detail however does not play a important role in our analysis.


\begin{lemma} \label{lemma:coder-gap-change-vr}
For all $s\ge 2$, $k\in [K]$ and $\vu\in\dom(g)$, we have
\begin{dgroup*}
    \begin{dmath*}
        a_s \E[ \langle\nabla f(\vx_{s,k}) - \vq_{s,k},  \vv_{s,k} - \vu\rangle] 
    \end{dmath*}
    \begin{dmath*}
        = \sum_{j=1}^m a_s \E[\langle \nabla^{(j)} f(\vx_{s,k}) -  \nabla^{(j)} f(\vw_{s,k, j}),  \vv_{s,k}^{(j)} - \vu^{(j)} \rangle]   
        - \sum_{j=1}^m a_{s,k-1} \E [\langle\nabla^{(j)} f(\vx_{s, k-1}) -  \nabla^{(j)} f(\vw_{s,k-1, j}),  \vv_{s,k-1}^{(j)} - \vu^{(j)} \rangle]
        - \sum_{j=1}^m a_{s,k-1} \E[\langle\nabla^{(j)} f_{t_j}(\vx_{s, k-1}) -  \nabla^{(j)} f_{t_j}(\vw_{s,k-1, j}),  \vv_{s,k}^{(j)} -  \vv_{s,k-1}^{(j)} \rangle ]  
        + \sum_{j=1}^m a_s\E[\langle\nabla^{(j)} f(\vw_{s,k, j}) - \big( \nabla^{(j)} f_{t_j}(\vw_{s,k, j}) - \nabla^{(j)} f_{t_j}(\vyt_{s-1}) + \boldsymbol{\mu}_{s}^{(j)}\big), \vv_{s,k}^{(j)} -  \vv_{s,k-1}^{(j)}\rangle],
    \end{dmath*}
\end{dgroup*}
\end{lemma}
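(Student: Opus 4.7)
The plan is to expand $\vq_{s,k}^{(j)}$ using its definition in Algorithm~\ref{alg:vr-acc-coder-analysis}, namely
\begin{equation*}
\vq_{s,k}^{(j)} = \nabla^{(j)} f_{t_j}(\vw_{s,k,j}) - \nabla^{(j)} f_{t_j}(\vyt_{s-1}) + \vmu_s^{(j)} + \tfrac{a_{s,k-1}}{a_s}\bigl(\nabla^{(j)} f_{t_j}(\vx_{s,k-1}) - \nabla^{(j)} f_{t_j}(\vw_{s,k-1,j})\bigr),
\end{equation*}
where $t_j$ denotes the sample drawn at block $j$ of outer iteration $(s,k)$ and $\vmu_s^{(j)} = \nabla^{(j)} f(\vyt_{s-1})$. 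By adding and subtracting $\nabla^{(j)} f(\vw_{s,k,j})$, $\nabla^{(j)} f(\vx_{s,k-1})$, and $\nabla^{(j)} f(\vw_{s,k-1,j})$, I would decompose $a_s\bigl(\nabla^{(j)} f(\vx_{s,k}) - \vq_{s,k}^{(j)}\bigr) = A_j + B_j + C_j + D_j$, where $A_j := a_s[\nabla^{(j)} f(\vx_{s,k}) - \nabla^{(j)} f(\vw_{s,k,j})]$ and $C_j := -a_{s,k-1}[\nabla^{(j)} f(\vx_{s,k-1}) - \nabla^{(j)} f(\vw_{s,k-1,j})]$ are deterministic given the filtration $\gF_{s,k,j-1}$ (the history through the draw immediately preceding $t_j$), while $B_j$ and $D_j$ collect the mean-zero stochastic residuals. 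Unbiasedness of $\nabla^{(j)} f_{t_j}$ under uniform sampling, together with the identity $\vmu_s^{(j)} = \nabla^{(j)} f(\vyt_{s-1})$, gives $\E[B_j \mid \gF_{s,k,j-1}] = \E[D_j \mid \gF_{s,k,j-1}] = 0$.

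Next I would write $\vv_{s,k}^{(j)} - \vu^{(j)} = (\vv_{s,k}^{(j)} - \vv_{s,k-1}^{(j)}) + (\vv_{s,k-1}^{(j)} - \vu^{(j)})$ and exploit that $\vv_{s,k-1}^{(j)}$ is $\gF_{s,k,j-1}$-measurable, so that by the tower property $\E\langle B_j,\, \vv_{s,k-1}^{(j)} - \vu^{(j)}\rangle = \E\langle D_j,\, \vv_{s,k-1}^{(j)} - \vu^{(j)}\rangle = 0$. The four nonzero pieces are then: $\E\langle A_j,\, \vv_{s,k}^{(j)} - \vu^{(j)}\rangle$ (with $A_j$ paired against the \emph{unsplit} vector), producing the first RHS term; $\E\langle C_j,\, \vv_{s,k-1}^{(j)} - \vu^{(j)}\rangle$, producing the second; $\E\langle B_j,\, \vv_{s,k}^{(j)} - \vv_{s,k-1}^{(j)}\rangle$, producing the fourth; and finally the combined inner product $\E\langle C_j + D_j,\, \vv_{s,k}^{(j)} - \vv_{s,k-1}^{(j)}\rangle$, which by the telescoping identity $C_j + D_j = -a_{s,k-1}[\nabla^{(j)} f_{t_j}(\vx_{s,k-1}) - \nabla^{(j)} f_{t_j}(\vw_{s,k-1,j})]$ produces the third RHS term. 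Summing over $j \in [m]$ assembles the claimed identity.

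The step I expect to be the main obstacle is choosing the asymmetric pairing correctly: $A_j$ stays paired with $\vv_{s,k}^{(j)} - \vu^{(j)}$ while $C_j$ is paired with $\vv_{s,k-1}^{(j)} - \vu^{(j)}$, a choice not forced by the algebra here in isolation but exactly what enables telescoping across $k$ and $s$ in Lemma~\ref{lemma:vrcoder-error-term-bound-combined}. Beyond this bookkeeping, the proof is a pure algebraic identity combined with the unbiasedness of the SVRG-style gradient estimator, so no new smoothness bound or inequality needs to be invoked at this step.
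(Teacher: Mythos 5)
Your proposal is correct and follows essentially the same route as the paper's proof: expand $\vq_{s,k}^{(j)}$ by its definition, keep the deterministic term $\nabla^{(j)} f(\vx_{s,k})-\nabla^{(j)} f(\vw_{s,k,j})$ paired with $\vv_{s,k}^{(j)}-\vu^{(j)}$, split the remaining terms across $(\vv_{s,k}^{(j)}-\vv_{s,k-1}^{(j)})+(\vv_{s,k-1}^{(j)}-\vu^{(j)})$, and use conditional unbiasedness given $\gF_{s,k,j-1}$ together with the measurability of $\vv_{s,k-1}^{(j)}$ and $\vw_{s,k,j}$ to kill or de-randomize the pieces against $\vv_{s,k-1}^{(j)}-\vu^{(j)}$. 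Your four-piece decomposition (separating the extrapolation term into its mean $C_j$ and mean-zero residual $D_j$, then recombining $C_j+D_j$ for the third term) is only a cosmetic reorganization of the paper's three-piece version.
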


\begin{proof}
Using the definition of $\vq_{s,k}^{(j)},$ we have 
\begin{dgroup*}
    \begin{dmath*} 
        a_s(\nabla^{(j)} f(\vx_{s,k})- \vq_{s,k}^{(j)})
    \end{dmath*}   
    \begin{dmath*}
        = a_s (\nabla^{(j)} f(\vx_{s,k}) -  \nabla^{(j)} f(\vw_{s,k,j})) + a_s( \nabla^{(j)} f(\vw_{s,k,j}) -  \vq_{s,k}^{(j)})
    \end{dmath*}
    \begin{dmath} \label{eq:coder-gap-change-vr-1}
        = a_s (\nabla^{(j)} f(\vx_{s,k}) - \nabla^{(j)} f(\vw_{s,k,j})) + a_s(\nabla^{(j)} f(\vw_{s,k,j}) -  (\nabla^{(j)} f_{t_j}(\vw_{s,k,j}) - \nabla^{(j)} f_{t_j}(\vyt_{s-1}) + \vmu_s^{(j)})) - a_{s,k-1} (\nabla^{(j)} f_{t_j}(\vx_{s, k-1}) - \nabla^{(j)} f_{t_j}(\vw_{s,k-1,j})).
    \end{dmath}
\end{dgroup*}
First, for $j \in [m]$ and any fixed $\vu^{(j)},$ we have
\begin{dgroup*}
    \begin{dmath*}
        \E[a_s\langle\nabla^{(j)} f(\vw_{s,k,j}) -  (\nabla^{(j)} f_{t_j}(\vw_{s,k,j}) - \nabla^{(j)} f_{t_j}(\vyt_{s-1}) + \vmu_s^{(j)}), \vv_{s,k}^{(j)} - \vu^{(j)}\rangle]
    \end{dmath*}
    \begin{dmath*}
        = \E[a_s\langle \nabla^{(j)} f(\vw_{s,k,j}) -  (\nabla^{(j)} f_{t_j}(\vw_{s,k,j}) - \nabla^{(j)} f_{t_j}(\vyt_{s-1}) + \vmu_s^{(j)}), \vv_{s,k}^{(j)} -  \vv_{s,k-1}^{(j)}\rangle]
        + a_s \E[ \langle \E[\nabla^{(j)} f(\vw_{s,k,j}) -  (\nabla^{(j)} f_{t_j}(\vw_{s,k,j}) - \nabla^{(j)} f_{t_j}(\vyt_{s-1}) + \vmu_s^{(j)}) | \gF_{s, k, j-1}],   \vv_{s,k-1}^{(j)} - \vu^{(j)}\rangle ]
    \end{dmath*}
    \begin{dmath} \label{eq:coder-gap-change-vr-2}
        = \E[a_s\langle \nabla^{(j)} f(\vw_{s,k,j}) -  (\nabla^{(j)} f_{t_j}(\vw_{s,k,j}) - \nabla^{(j)} f_{t_j}(\vyt_{s-1}) + \vmu_s^{(j)}), \vv_{s,k}^{(j)} -  \vv_{s,k-1}^{(j)}\rangle],
    \end{dmath}
\end{dgroup*}
where the first equality follows from $\vv_{s, k-1}^{(j)} \in \gF_{s,k,j-1}$ and the second equality follows from  $\E[ \nabla^{(j)} f_{t_j}(\vw_{s,k,j})  | \gF_{s,k,j-1} ] = \nabla^{(j)} f(\vw_{s,k,j}) $ and  $\E[\nabla^{(j)} f_{t_j}(\vyt_{s-1})| \gF_{s,k,j-1}] = \nabla^{(j)} f(\vyt_{s-1}) = \vmu_s^{(j)}$. 
Meanwhile, for $j \in [m]$ and any fixed $\vu^{(j)}$, we have
\begin{dgroup*}
    \begin{dmath*}
        \E[a_{s,k-1} \langle\nabla^{(j)} f_{t_j}(\vx_{s, k-1}) - \nabla^{(j)} f_{t_j}(\vw_{s,k-1,j}), \vv_{s,k}^{(j)} - \vu^{(j)}\rangle]
    \end{dmath*}
    \begin{dmath*}
        = \E[a_{s,k-1} \langle\nabla^{(j)} f_{t_j}(\vx_{s, k-1}) - \nabla^{(j)} f_{t_j}(\vw_{s,k-1,j}), \vv_{s,k}^{(j)} - \vv_{s,k-1}^{(j)}\rangle]
        + \E[\E[a_{s,k-1}\langle \nabla^{(j)} f_{t_j}(\vx_{s, k-1}) - \nabla^{(j)} f_{t_j}(\vw_{s,k-1,j}),  \vv_{s,k-1}^{(j)} - \vu^{(j)}\rangle| \gF_{s,k,j-1}]]
    \end{dmath*}
    \begin{dmath} \label{eq:coder-gap-change-vr-3}
        = \E[a_{s,k-1}\langle\nabla^{(j)} f_{t_j}(\vx_{s, k-1}) - \nabla^{(j)} f_{t_j}(\vw_{s,k-1,j}), \vv_{s,k}^{(j)} - \vv_{s,k-1}^{(j)}\rangle]
        + \E[ a_{s,k-1} \langle\nabla^{(j)} f(\vx_{s, k-1}) - \nabla^{(j)} f(\vw_{s,k-1,j}),  \vv_{s,k-1}^{(j)} - \vu^{(j)}\rangle], 
    \end{dmath}
\end{dgroup*}
where the last equality is by $\vv_{s,k-1}^{(j)}\in\gF_{s,k,j-1}$, $\E[\nabla^{(j)} f_{t_j}(\vx_{s, k-1})|\gF_{s,k,j-1}] = \nabla^{(j)} f(\vx_{s, k-1})$ and \\
$\E[\nabla^{(j)} f_{t_j}(\vw_{s,k-1,j}) | \gF_{s,k, j-1}] = \nabla^{(j)} f(\vw_{s,k-1,j})$. Combining Eqs.~\eqref{eq:coder-gap-change-vr-1}--\eqref{eq:coder-gap-change-vr-3} completes the proof.
\end{proof}

In the following two lemmas, we will bound the third and the fourth terms of the R.H.S. in Lemma~\ref{lemma:coder-gap-change-vr} by above using our novel Lipschitz Assumption~\ref{assmpt:Lip-const} and Assumption~\ref{assmpt:average-smooth}.


\begin{lemma} \label{lemma:vrcoder-inner-prod-bound-1}

For $s\ge 2$ and $k\in[K]$, we have
\begin{align*}
    & \; - \sum_{j=1}^m a_{s, k-1} \E\left[\inner{\nabla^{(j)} f_{t_j} (\vx_{s, k-1}) - \nabla^{(j)} f_{t_j} (\vw_{s, k-1, j})}{\vv_{s, k}^{(j)} - \vv_{s, k-1}^{(j)}}\right] \\
    & \; \le \E \left[ \frac{K (1 + A_{s-1} \gamma)}{8} \norm{\vv_{s, k} - \vv_{s, k-1}}^2 + \frac{a_{s, k-1}^4 L^2}{K A_{s, k-1}^2 (1 + A_{s-1} \gamma)} \norm{\vv_{s, k-1} - \vv_{s, k-2}}^2 \right],
\end{align*}
where $a_{s, 0} = a_{s-1}$, $A_{s, 0} = A_{s-1}$ and $a_{s, k} = a_s$, $A_{s, k} = A_s$ for $k \in [K]$.
\end{lemma}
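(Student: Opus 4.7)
}
The plan is to follow the same three-step pattern as in the proof of Lemma~\ref{lemma:acoder-gap-change}: (i) split each block inner product via Young's inequality with a free parameter $\alpha_{s,k} > 0$, (ii) upper bound the gradient-difference squared norms through Assumption~\ref{assmpt:average-smooth}, and (iii) convert the resulting $\tilde{\mQ}$-weighted norm of $\vy_{s,k-1}-\vx_{s,k-1}$ into a multiple of $\|\vv_{s,k-1}-\vv_{s,k-2}\|^2$ using the algorithm's convex-combination updates. Crucially, every bound we invoke is deterministic, so it holds pointwise inside the expectation and no conditional-expectation bookkeeping is needed at this step.

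First, for each $j\in[m]$ Young's inequality gives, for any $\alpha > 0$,
\begin{align*}
-a_{s,k-1}\inner{\nabla^{(j)} f_{t_j}(\vx_{s,k-1}) - \nabla^{(j)} f_{t_j}(\vw_{s,k-1,j})}{\vv_{s,k}^{(j)}-\vv_{s,k-1}^{(j)}}
\le\;& \frac{\alpha a_{s,k-1}^2}{2}\,\|\nabla^{(j)} f_{t_j}(\vx_{s,k-1}) - \nabla^{(j)} f_{t_j}(\vw_{s,k-1,j})\|^2 \\
&+ \frac{1}{2\alpha}\,\|\vv_{s,k}^{(j)}-\vv_{s,k-1}^{(j)}\|^2.
\end{align*}
Assumption~\ref{assmpt:average-smooth} turns the first term into $\|\vx_{s,k-1}-\vw_{s,k-1,j}\|_{\mQ^j}^2$. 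Since $\vx_{s,k-1}$ and $\vw_{s,k-1,j}$ agree on the first $j$ blocks, we can write $\vx_{s,k-1}-\vw_{s,k-1,j} = (\vy_{s,k-1}-\vx_{s,k-1})_{\ge j+1}$, and the same calculation as in Lemma~\ref{lemma:Lip-quad-ub}/\ref{lemma:acoder-gap-change} yields
\begin{equation*}
\sum_{j=1}^m \|\vx_{s,k-1}-\vw_{s,k-1,j}\|_{\mQ^j}^2 \;\le\; \|\vy_{s,k-1}-\vx_{s,k-1}\|_{\tilde{\mQ}}^2 \;\le\; \|\tilde{\mQ}\|\,\|\vy_{s,k-1}-\vx_{s,k-1}\|^2 \;=\; \tfrac{L^2}{2}\,\|\vy_{s,k-1}-\vx_{s,k-1}\|^2,
\end{equation*}
by the definition of $L$ in Assumption~\ref{assmpt:Lip-const}.

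Next, the convex-combination definitions in Algorithm~\ref{alg:vr-acc-coder-analysis} give $\vy_{s,k-1}-\vx_{s,k-1} = \frac{a_{s,k-1}}{A_{s,k-1}}(\vv_{s,k-1}-\vv_{s,k-2})$ for the inner iterations (for $k-1=0$ this also holds with the convention $\vv_{s,-1}:=\vv_{s-1,K-1}$ because $\vx_{s,0}$ and $\vy_{s,0}$ are carried over from the previous epoch). Summing the Young inequality over $j$, taking expectations, and substituting the above gives
\begin{equation*}
-\sum_{j=1}^m a_{s,k-1}\,\E\!\left[\inner{\nabla^{(j)} f_{t_j}(\vx_{s,k-1})-\nabla^{(j)} f_{t_j}(\vw_{s,k-1,j})}{\vv_{s,k}^{(j)}-\vv_{s,k-1}^{(j)}}\right]
\le \E\!\left[\frac{\alpha\,a_{s,k-1}^4 L^2}{4 A_{s,k-1}^2}\|\vv_{s,k-1}-\vv_{s,k-2}\|^2 + \frac{1}{2\alpha}\|\vv_{s,k}-\vv_{s,k-1}\|^2\right].
\end{equation*}
Finally, choose $\alpha = \frac{4}{K(1+A_{s-1}\gamma)}$, which makes $\frac{1}{2\alpha} = \frac{K(1+A_{s-1}\gamma)}{8}$ and $\frac{\alpha a_{s,k-1}^4 L^2}{4 A_{s,k-1}^2} = \frac{a_{s,k-1}^4 L^2}{K A_{s,k-1}^2(1+A_{s-1}\gamma)}$, matching the claimed bound.

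The main obstacle, and the only place where care is needed, is the bookkeeping when $k=1$: because $a_{s,0} = a_{s-1}$, $A_{s,0} = A_{s-1}$, $\vx_{s,0}=\vx_{s-1,K}$, and $\vy_{s,0}=\vy_{s-1,K}$, the identity $\vy_{s,k-1}-\vx_{s,k-1} = \frac{a_{s,k-1}}{A_{s,k-1}}(\vv_{s,k-1}-\vv_{s,k-2})$ must be verified at the boundary using the previous-epoch update $\vy_{s-1,K}-\vx_{s-1,K} = \frac{a_{s-1}}{A_{s-1}}(\vv_{s-1,K}-\vv_{s-1,K-1})$ and the identification $\vv_{s,-1}=\vv_{s-1,K-1}$; everything else is a direct application of Young's inequality, the block Lipschitz assumption, and the $\tilde{\mQ}$ estimate already established in Assumption~\ref{assmpt:Lip-const}.
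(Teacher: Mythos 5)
Your proposal is correct and follows essentially the same route as the paper's proof: Young's inequality blockwise (the paper hard-codes the weights that your choice $\alpha = \tfrac{4}{K(1+A_{s-1}\gamma)}$ recovers), Assumption~\ref{assmpt:average-smooth} to pass to the $\mQ^j$-seminorms, the $(\mQ^j)_{\ge j+1}$ summation bounded by $\|\tilde\mQ\| = L^2/2$, and the convex-combination identity relating $\vy_{s,k-1}-\vx_{s,k-1}$ to $\vv_{s,k-1}-\vv_{s,k-2}$. Your explicit handling of the $k=1$ boundary via $a_{s,0}=a_{s-1}$, $A_{s,0}=A_{s-1}$ and $\vv_{s,-1}=\vv_{s-1,K-1}$ is a detail the paper leaves implicit, and it is handled correctly.
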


\begin{proof}

Using Cauchy–Schwarz and Young's inequalities, we have
\begin{align} \label{eq:vrcoder-inner-prod-bound-1-1}
    & - a_{s, k-1} \E\left[\inner{\nabla^{(j)} f_{t_j} (\vx_{s, k-1}) - \nabla^{(j)} f_{t_j} (\vw_{s, k-1, j})}{\vv_{s, k}^{(j)} - \vv_{s, k-1}^{(j)}}\right] \notag\\
    & \le \E \br{ \frac{2 a_{s, k-1}^2}{K \pr{1 + A_{s-1} \gamma}} \norm{\nabla^{(j)} f_{t_j} (\vx_{s, k-1}) - \nabla^{(j)} f_{t_j} (\vw_{s, k-1, j})}^2  + \frac{K \pr{1 + A_{s-1} \gamma}}{8} \norm{\vv_{s, k}^{(j)} - \vv_{s, k-1}^{(j)} }^2} \notag\\
    & \le \E \br{ \frac{2 a_{s, k-1}^2}{K \pr{1 + A_{s-1} \gamma}} \norm{\vx_{s, k-1} - \vw_{s, k-1, j}}_{\mQ^j}^2  + \frac{K \pr{1 + A_{s-1} \gamma}}{8} \norm{\vv_{s, k}^{(j)} - \vv_{s, k-1}^{(j)} }^2} \notag\\
    & = \E \br{ \frac{2 a_{s, k-1}^2}{K \pr{1 + A_{s-1} \gamma}} \norm{\vx_{s, k-1} - \vy_{s, k-1}}_{(\mQ^j)_{\ge j+1}}^2  + \frac{K \pr{1 + A_{s-1} \gamma}}{8} \norm{\vv_{s, k}^{(j)} - \vv_{s, k-1}^{(j)} }^2},
\end{align}
where we used Assumption~\ref{assmpt:average-smooth} in the first inequality and the definitions of $\vx_{s, k-1}$ and $\vw_{s, k-1, j}$ in the last equality. Finally by including the summation and using the definition of $L$, $\vx_{s, k-1}$ and $\vy_{s, k-1}$, the first term of the above expression becomes
\begin{align} \label{eq:vrcoder-inner-prod-bound-1-2}
    \sum_{j=1}^m \E \br{ \frac{2 a_{s, k-1}^2}{K \pr{1 + A_{s-1} \gamma}} \norm{\vx_{s, k-1} - \vy_{s, k-1}}_{(\mQ^j)_{\ge j+1}}^2} 
    & = \E \br{ \frac{2 a_{s, k-1}^2}{K \pr{1 + A_{s-1} \gamma}} \norm{\vx_{s, k-1} - \vy_{s, k-1}}_{\sum_{j=1}^m (\mQ^j)_{\ge j+1}}^2} \notag\\
    & \le \E \br{ \frac{a_{s, k-1}^4 L^2}{K A_{s, k-1} ^2 \pr{1 + A_{s-1} \gamma}} \norm{\vv_{s, k-1} - \vv_{s, k-2}}^2},
\end{align}
where $a_{s, 0} = a_{s-1}$, $A_{s, 0} = A_{s-1}$ and $a_{s, k} = a_s$, $A_{s, k} = A_s$ for $k \in [K]$. Taking summation over $j$ and combining Eqs.~\eqref{eq:vrcoder-inner-prod-bound-1-1} and \eqref{eq:vrcoder-inner-prod-bound-1-2} give the lemma statement.

\end{proof}


\begin{lemma} \label{lemma:vrcoder-inner-prod-bound-2}

For $s\ge 2$ and $k\in[K]$, we have
\begin{align*}
    & \; \sum_{j=1}^m a_s \E\left[\inner{\nabla^{(j)} f(\vw_{s, k, j}) - \pr{\nabla^{(j)} f_{t_j} (\vw_{s, k, j}) - \nabla^{(j)} f_{t_j} (\vyt_{s-1}) + \nabla^{(j)} f (\vyt_{s-1})}}{\vv_{s, k}^{(j)} - \vv_{s, k-1}^{(j)}}\right] \\
    & \; \le \E \left[ \pr{\frac{2 L^2 a_s^4}{K A_s^2 (1 + A_{s-1} \gamma)} + \frac{K (1 + A_{s-1} \gamma)}{8}} \norm{\vv_{s, k} - \vv_{s, k-1}}^2 \right] \\
    & \quad \; + \frac{8 a_s^2 L}{K \pr{1 + A_{s-1} \gamma}} \E \left[ f(\vyt_{s-1}) - f(\vx_{s, k}) - \inner{\nabla f(\vx_{s, k})}{\vyt_{s-1} - \vx_{s, k}} \right]
\end{align*}

\end{lemma}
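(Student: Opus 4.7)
My plan is to bound each $j$-term in the sum by Young's inequality, then handle the resulting squared norm of the SVRG correction by first passing from variance to second moment and then invoking the block-smoothness of each $f_t$. Concretely, I would apply Young's inequality with parameter $\alpha = \frac{4}{K(1+A_{s-1}\gamma)}$ to obtain, for each $j$,
\begin{equation*}
a_s \inner{\nabla^{(j)}f(\vw_{s,k,j}) - \tilde{\nabla}_{s,k}^{(j)}}{\vv_{s,k}^{(j)} - \vv_{s,k-1}^{(j)}} \le \frac{2 a_s^2}{K(1+A_{s-1}\gamma)}\big\|\nabla^{(j)}f(\vw_{s,k,j}) - \tilde{\nabla}_{s,k}^{(j)}\big\|^2 + \frac{K(1+A_{s-1}\gamma)}{8}\big\|\vv_{s,k}^{(j)} - \vv_{s,k-1}^{(j)}\big\|^2,
\end{equation*}
so summing over $j$ immediately delivers the $\frac{K(1+A_{s-1}\gamma)}{8}\|\vv_{s,k}-\vv_{s,k-1}\|^2$ term of the claim.

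Next, since $\tilde{\nabla}_{s,k}^{(j)}$ is an unbiased estimator of $\nabla^{(j)}f(\vw_{s,k,j})$ given $\gF_{s,k,j-1}$ (because $t_j$ is sampled uniformly and all other quantities are $\gF_{s,k,j-1}$-measurable), the standard variance $\le$ second-moment inequality gives
\begin{equation*}
\E\big[\|\nabla^{(j)}f(\vw_{s,k,j}) - \tilde{\nabla}_{s,k}^{(j)}\|^2 \,\big|\, \gF_{s,k,j-1}\big] \le \E\big[\|\nabla^{(j)}f_{t_j}(\vw_{s,k,j}) - \nabla^{(j)}f_{t_j}(\vyt_{s-1})\|^2 \,\big|\, \gF_{s,k,j-1}\big].
\end{equation*}
I would then split via $\|a+b\|^2 \le 2\|a\|^2 + 2\|b\|^2$ at the intermediate point $\nabla^{(j)}f_{t_j}(\vx_{s,k})$, yielding two pieces. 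For the first, using Assumption~\ref{assmpt:average-smooth} and that $\vw_{s,k,j}$ and $\vx_{s,k}$ agree on the first $j$ blocks, I get $\|\nabla^{(j)}f_{t_j}(\vw_{s,k,j}) - \nabla^{(j)}f_{t_j}(\vx_{s,k})\|^2 \le \|\vy_{s,k} - \vx_{s,k}\|_{(\mQ^j)_{\ge j+1}}^2$; summing over $j$ is bounded by $\|\vy_{s,k}-\vx_{s,k}\|_{\tilde{\mQ}}^2 \le \frac{L^2}{2}\|\vy_{s,k}-\vx_{s,k}\|^2$, and substituting $\vy_{s,k}-\vx_{s,k} = \frac{a_s}{A_s}(\vv_{s,k}-\vv_{s,k-1})$ produces exactly the $\frac{2L^2 a_s^4}{KA_s^2(1+A_{s-1}\gamma)}\|\vv_{s,k}-\vv_{s,k-1}\|^2$ term.

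For the second piece, summing over $j$ first collapses $\sum_{j=1}^m \|\nabla^{(j)}f_{t_j}(\vx_{s,k}) - \nabla^{(j)}f_{t_j}(\vyt_{s-1})\|^2 = \|\nabla f_{t_j}(\vx_{s,k}) - \nabla f_{t_j}(\vyt_{s-1})\|^2$. Since each $f_t$ satisfies Assumption~\ref{assmpt:Lip-const} with the same $\mQ^j$'s (by Assumption~\ref{assmpt:average-smooth}), the proof of Lemma~\ref{lemma:Lip-quad-ub} applies verbatim to each $f_t$, giving $\|\nabla f_t(\vx_{s,k}) - \nabla f_t(\vyt_{s-1})\|^2 \le 2L[f_t(\vyt_{s-1}) - f_t(\vx_{s,k}) - \inner{\nabla f_t(\vx_{s,k})}{\vyt_{s-1} - \vx_{s,k}}]$. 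Averaging over $t\in[n]$ (which, via the tower property, is precisely what $\E[\cdot | \gF_{s,k,j-1}]$ does) replaces $f_t$ by $f$ on the right-hand side, and multiplying by the Young's coefficient $\frac{2 a_s^2}{K(1+A_{s-1}\gamma)}$ produces the advertised $\frac{8 a_s^2 L}{K(1+A_{s-1}\gamma)}$ factor in front of the Bregman-like term. The main obstacle is the second-piece bookkeeping: one must be careful that the ``$\sum_j$ then collapse to full gradient'' trick is valid for a single sample $t_j$ (which it is, because Assumption~\ref{assmpt:average-smooth} imposes the block-Lipschitz condition uniformly over all $t$), and that Lemma~\ref{lemma:Lip-quad-ub} can indeed be invoked at the per-sample level before averaging over $t$. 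Everything else is a routine assembly of Young's inequality and the pre-established smoothness identities.
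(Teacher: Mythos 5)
Your proof follows the paper's argument essentially step for step: the same Young's inequality with parameter $\tfrac{4}{K(1+A_{s-1}\gamma)}$, the same variance-to-second-moment reduction using unbiasedness of the SVRG estimator, the same splitting at the intermediate point $\nabla^{(j)} f_{t_j}(\vx_{s,k})$, and the same two bounds for the resulting pieces (the Lemma~\ref{lemma:vrcoder-inner-prod-bound-1}-style bound giving $\frac{2L^2a_s^4}{KA_s^2(1+A_{s-1}\gamma)}\norm{\vv_{s,k}-\vv_{s,k-1}}^2$ for the first, and the per-sample application of Lemma~\ref{lemma:Lip-quad-ub} to each $f_l$ followed by averaging for the second). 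One small imprecision: the identity $\sum_{j=1}^m \norm{\nabla^{(j)} f_{t_j}(\vx_{s,k})-\nabla^{(j)} f_{t_j}(\vyt_{s-1})}^2=\norm{\nabla f_{t_j}(\vx_{s,k})-\nabla f_{t_j}(\vyt_{s-1})}^2$ does not hold pathwise, since a fresh sample $t_j$ is drawn for each block; the collapse is only valid after conditioning (on $\gF_{s,k,0}$, where $\vx_{s,k}$ and $\vyt_{s-1}$ are measurable), at which point every block term becomes the same average $\frac{1}{n}\sum_{l}\norm{\cdot}^2$ and the sums over $j$ and $l$ can be exchanged --- which is how the paper proceeds and what your tower-property step implicitly does, so the final bound is unaffected.
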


\begin{proof}
Using similar arguments in the proof of Lemma~\ref{lemma:vrcoder-inner-prod-bound-1}, we have
\begin{dgroup*}
    \begin{dmath*}
        a_s \E\left[\inner{\nabla^{(j)} f(\vw_{s, k, j}) - \pr{\nabla^{(j)} f_{t_j} (\vw_{s, k, j}) - \nabla^{(j)} f_{t_j} (\vyt_{s-1}) + \nabla^{(j)} f (\vyt_{s-1})}}{\vv_{s, k}^{(j)} - \vv_{s, k-1}^{(j)}}\right] 
    \end{dmath*}
    \begin{dmath*}
        \le \E \br{\frac{2 a_s^2}{K \pr{1 + A_{s-1}\gamma}} \norm{\nabla^{(j)} f (\vw_{s, k, j}) - \pr{\nabla^{(j)} f_{t_j} (\vw_{w, k, j}) - \nabla^{(j)} f_{t_j} (\vyt_{s-1}) + \vmu_{s}^{(j)}}}^2 + \frac{K \pr{1 + A_{s-1}\gamma}}{8} \norm{\vv_{s, k}^{(j)} - \vv_{s, k-1}^{(j)}}^2}
    \end{dmath*}
    \begin{dmath*}
        = \E \br{\frac{2 a_s^2}{K \pr{1 + A_{s-1}\gamma}} \E \br{\norm{\pr{\nabla^{(j)} f_{t_j} (\vw_{s, k, j}) + \nabla^{(j)} f_{t_j} (\vyt_{s-1})} - \pr{\nabla^{(j)} f (\vw_{s, k, j}) - \vmu_{s}^{(j)}}}^2 | \gF_{s, k, j-1}}} + \E \br{\frac{K \pr{1 + A_{s-1}\gamma}}{8} \norm{\vv_{s, k}^{(j)} - \vv_{s, k-1}^{(j)}}^2}
    \end{dmath*}
    \begin{dmath*}
        = \E \br{\frac{2 a_s^2}{K \pr{1 + A_{s-1}\gamma}} \E \br{\norm{\nabla^{(j)} f_{t_j} (\vw_{s, k, j}) - \nabla^{(j)} f_{t_j} (\vyt_{s-1})}^2 | \gF_{s, k, j-1}}} + \E \br{\frac{K \pr{1 + A_{s-1}\gamma}}{8} \norm{\vv_{s, k}^{(j)} - \vv_{s, k-1}^{(j)}}^2}
    \end{dmath*}
    \begin{dmath} \label{eq:vrcoder-inner-prod-bound-2-1}
        \le \E \br{\frac{4 a_s^2}{K \pr{1 + A_{s-1}\gamma}} \pr{\norm{\nabla^{(j)} f_{t_j} (\vw_{s, k, j}) - \nabla^{(j)} f_{t_j} (\vx_{s, k})}^2 + \norm{\nabla^{(j)} f_{t_j} (\vx_{s, k}) - \nabla^{(j)} f_{t_j} (\vyt_{s-1})}^2} + \frac{K \pr{1 + A_{s-1}\gamma}}{8} \norm{\vv_{s, k}^{(j)} - \vv_{s, k-1}^{(j)}}^2}
    \end{dmath},
\end{dgroup*}
where the first equality comes from $\E \br{\nabla^{(j)} f_{t_j} (\vw_{s, k, j}) - \nabla^{(j)} f_{t_j} (\vyt_{s-1}) | \gF_{s, k, j-1}} = \nabla^{(j)} f (\vw_{s, k, j}) - \nabla^{(j)} f (\vyt_{s-1})$ since the only randomness is in $t_j$ when conditioned at $\gF_{s, k, j-1}$, and the last inequality comes from $(a + b)^2 \le 2 (a^2 + b^2)$. In order to bound the second term in Eq.~(\ref{eq:vrcoder-inner-prod-bound-2-1}), we will include the outer summation with respect to $j$ and apply the results from Lemma~\ref{lemma:Lip-quad-ub} to get
\begin{dgroup*}
    \begin{dmath*}
        \sum_{j=1}^m \E \br{\norm{\nabla^{(j)} f_{t_j} (\vx_{s, k}) - \nabla^{(j)} f_{t_j} (\vyt_{s-1})}^2}
        = \sum_{j=1}^m \E \br{ \E \br{\norm{\nabla^{(j)} f_{t_j} (\vx_{s, k}) - \nabla^{(j)} f_{t_j} (\vyt_{s-1})}^2 | \gF_{s, k, 0}}}
    \end{dmath*}
    \begin{dmath*}
        = \E \br{ \sum_{j=1}^m \sum_{l = 1}^n \frac{1}{n} \norm{\nabla^{(j)} f_{l} (\vx_{s, k}) - \nabla^{(j)} f_{l} (\vyt_{s-1})}^2}
    \end{dmath*}
    \begin{dmath*}
        = \E \br{ \sum_{l = 1}^n  \frac{1}{n} \norm{\nabla f_{l} (\vx_{s, k}) - \nabla f_{l} (\vyt_{s-1})}^2}
    \end{dmath*}
    \begin{dmath} \label{eq:vrcoder-inner-prod-bound-2-2}
        \le \E \br{ 2L \pr{f (\vyt_{s-1}) - f (\vx_{s, k}) - \inner{\nabla f (\vx_{s, k})}{\vyt_{s-1} - \vx_{s, k}}}}
    \end{dmath},
\end{dgroup*}
where the second equality comes from $\vx_{s, k}, \vyt_{s-1} \in \gF_{s, k, 0}$ and the last inequality is by applying Lemma~\ref{lemma:Lip-quad-ub} and the definition of $f (\vx) = \frac{1}{n} \sum_{l=1}^n f_l(\vx)$. To bound the first term of Eq.~(\ref{eq:vrcoder-inner-prod-bound-2-1}), we apply similar arguments as in Lemma~\ref{lemma:vrcoder-inner-prod-bound-1} and get
\begin{equation} \label{eq:vrcoder-inner-prod-bound-2-3}
    \sum_{j=1}^m \E \br{\norm{\nabla^{(j)} f_{t_j} (\vw_{s, k, j}) - \nabla^{(j)} f_{t_j} (\vx_{s, k})}^2} \le \E \br{\frac{a_s^2 L^2}{2 A_s^2} \norm{\vv_{s, k} - \vv_{s, k-1}}^2}.
\end{equation}
Combining Eqs.~(\ref{eq:vrcoder-inner-prod-bound-2-1}) -- (\ref{eq:vrcoder-inner-prod-bound-2-3}) gives the lemma statement.

\end{proof}

\lemmavrcodererrortermboundcombined*

\begin{proof}
Combining Lemma~\ref{lemma:E-bound-vr}, \ref{lemma:coder-gap-change-vr}, \ref{lemma:vrcoder-inner-prod-bound-1}, \ref{lemma:vrcoder-inner-prod-bound-2}, setting $a_s$ such that $a_s^2 = \frac{K A_{s-1} \pr{1 + A_{s-1} \gamma}}{8 L}$ and using $\frac{A_{s-1}}{A_s} \le 1$, we have
\begin{dgroup*}
    \begin{dmath*}
        \E \br{E_{s, k} (\vu)}
        \le \sum_{j=1}^m a_s \E \br{\inner{\nabla^{(j)} f (\vx_{s, k}) - \nabla^{(j)} f (\vw_{s, k, j})}{\vv_{s, k}^{(j)} - \vu^{(j)}}} - \sum_{j=1}^m a_{s, k-1} \E \br{\inner{\nabla^{(j)} f(\vx_{s, k-1}) - \nabla^{(j)} f (\vw_{s, k-1, j})}{\vv_{s, k-1}^{(j)} - \vu^{(j)}}} - \pr{\frac{5 K \pr{1 + A_{s-1} \gamma}}{32}} \E \br{\norm{\vv_{s, k} - \vv_{s, k-1}}^2} + \pr{\frac{a_{s, k-1}^4 L^2}{K A_{s, k-1}^2 \pr{1 + A_{s-1} \gamma}}}\E \br{\norm{\vv_{s, k-1} - \vv_{s, k-2}}^2}. 
    \end{dmath*}
\end{dgroup*}

Next, by setting $a_{s, 0} = a_{s-1}$ and $a_{s, k} = a_{s}$ for $k = [K]$, we can telescope the error terms and get
\begin{dgroup*}
    \begin{dmath*}
        \sum_{s = 2}^S \sum_{k=1}^K \E \br{E_{s, k} (\vu)}
        \le \sum_{j=1}^m \sum_{s=2}^S \sum_{k=1}^K a_s \E \br{\inner{\nabla^{(j)} f (\vx_{s, k}) - \nabla^{(j)} f (\vw_{s, k, j})}{\vv_{s, k}^{(j)} - \vu^{(j)}}} - \sum_{j=1}^m \sum_{s=2}^S \sum_{k=1}^K a_{s, k-1} \E \br{\inner{\nabla^{(j)} f(\vx_{s, k-1}) - \nabla^{(j)} f (\vw_{s, k-1, j})}{\vv_{s, k-1}^{(j)} - \vu^{(j)}}} - \sum_{s=2}^S \sum_{k=1}^K \frac{5 K \pr{1 + A_{s-1} \gamma}}{32} \E \br{\norm{\vv_{s, k} - \vv_{s, k-1}}^2} + \sum_{s=2}^S \br{ \frac{K \pr{1 + A_{s-2} \gamma}}{64} \norm{\vv_{s, 0} - \vv_{s, -1}}^2 + \sum_{k=2}^K \frac{K \pr{1 + A_{s-1} \gamma}}{64} \norm{\vv_{s, k-1} - \vv_{s, k-2}}^2}
        \le 
        \sum_{j=1}^m a_S \E \br{\inner{\nabla^{(j)} f (\vx_{S, K}) - \nabla^{(j)} f (\vw_{S, K, j})}{\vv_{S, K}^{(j)} - \vu^{(j)}}} 
        - \sum_{j=1}^m a_1 \E \br{\inner{\nabla^{(j)} f(\vx_{1, 1}) - \nabla^{(j)} f (\vw_{1, 1, j})}{\vv_{1, 1}^{(j)} - \vu^{(j)}}} 
        + \frac{K \pr{1 + A_0 \gamma}}{64} \E \br{\norm{\vv_{2, 0} - \vv_{2, -1}}^2} - \frac{5 K \pr{1 + A_{S-1} \gamma}}{32} \E \br{ \norm{\vv_{S, K} - \vv_{S, K-1}}^2}.
    \end{dmath*}
\end{dgroup*}
The lemma follows by setting $A_0 = 0$, $\vv_{2, -1} = \vv_{1, 0}$, $\vx_{2, 0} = \vx_{1, 1}$ and $\vw_{2, 0, j} = \vw_{1, 1, j}$.
\end{proof}

\thmvracodermain*

\begin{proof}
Combining Lemma~\ref{lemma:a-coder-gap-fn-vr} and Lemma~\ref{lemma:vrcoder-error-term-bound-combined}, and by setting $\vy_{1, 0} = \vx_{1, 1} = \vx_0$ and $\vy_{1, 1} = \vv_{1, 1}$, we have
\begin{dgroup*}
    \begin{dmath} \label{eq:vrcoder-main-1}
        K A_S \E \br{ \bar{f} (\vyt_S) - \bar{f} (\vu)}
        \le \frac{K}{2} \norm{\vx_0 - \vu}^2 - \frac{K \pr{1 + A_S}}{2} \E \br{\norm{\vv_{S, K} - \vu}^2} - \frac{15 K}{64} \norm{\vv_{1, 1} - \vx_0}^2 - \frac{5 K \pr{1 + A_{S-1} \gamma}}{32} \E \br{\norm{\vv_{S, K} - \vv_{S, K-1}}^2} - \sum_{j=1}^m a_1 \inner{\nabla^{(j)} f(\vx_{1, 1}) - \nabla^{(j)} f(\vw_{1, 1, j})}{\vv_{1, 1}^{(j)} - \vu^{(j)}} + \sum_{j=1}^m a_S \E \br{ \inner{\nabla^{(j)} f(\vx_{S, K}) - \nabla^{(j)} f (\vw_{S, K, j})}{\vv_{S, K}^{(j)} - \vv_{S, K-1}^{(j)}}}.
    \end{dmath}
\end{dgroup*}
Using the same approach as Lemma~\ref{lemma:vrcoder-inner-prod-bound-1} and Lemma~\ref{lemma:vrcoder-inner-prod-bound-2}, we can upper bound the first inner product term by 
\begin{dgroup*}
    \begin{dmath*}
        - \sum_{j=1}^m a_1 \inner{\nabla^{(j)} f(\vx_{1, 1}) - \nabla^{(j)} f(\vw_{1, 1, j})}{\vv_{1, 1}^{(j)} - \vu^{(j)}}
        \le \frac{1}{8 K} \norm{\vv_{1, 1} - \vx_0}^2 + \frac{K}{16} \norm{\vv_{1, 1} - \vu}^2
    \end{dmath*}
    \begin{dmath} \label{eq:vrcoder-main-2}
        \le \frac{15 K}{64} \norm{\vv_{1, 1} - \vx_0}^2 + \frac{K}{8} \norm{\vx_0 - \vu}^2,
    \end{dmath}
\end{dgroup*}
where we used $(a + b)^2 \le 2(a^2 + b^2)$, $a_1 \le \frac{1}{4 L}$ and $K \ge 2$ in the last inequality. Similarly, we have
\begin{dgroup*}
    \begin{dmath} \label{eq:vrcoder-main-3}
        \sum_{j=1}^m a_S \E \br{ \inner{\nabla^{(j)} f(\vx_{S, K}) - \nabla^{(j)} f (\vw_{S, K, j})}{\vv_{S, K}^{(j)} - \vv_{S, K-1}^{(j)}}}
        \le \frac{K \pr{1 + A_{S-1} \gamma}}{8} \E \br{ \norm{\vv_{S, K} - \vv_{S, K-1}}^2} + \frac{K \pr{1 + A_{S-1} \gamma}}{64} \E \br{ \norm{\vv_{S, K} - \vu}^2},
    \end{dmath}
\end{dgroup*}
where we also used $a_S^2 \le \frac{K A_{S-1} \pr{1 + A_{S-1} \gamma}}{8L}$ here. Combining Eqs.~(\ref{eq:vrcoder-main-1})--(\ref{eq:vrcoder-main-3}) gives us our main bounds in the theorem. Lastly, recall that $\{a_s\}_{s\geq 1}$ is chosen so that ${a_s}^2 = \frac{K A_{s-1} (1 + A_{s-1}\gamma)}{8 L}$. When $\gamma = 0,$ this leads to the standard $A_s \geq \frac{k^2 K}{64 L}$ growth of accelerated algorithms by choosing $a_s = \frac{s K}{32 L}$ for $k \geq 1.$ When $\gamma > 0,$ we have $\frac{a_s}{A_{s-1}} > \sqrt{\frac{K \gamma}{8 L}},$ and it remains to use that $A_k = \frac{A_k}{A_{k-1}}\cdot \dots \cdot \frac{A_2}{A_1}\cdot A_1 = A_1 \Big(1 + \sqrt{\frac{K \gamma}{8 L}}\Big)^{k-1}$ where $a_1 = A_1 = \frac{1}{4 L}$ using the choice of $a_k$ in Algorithm~\ref{alg:acc-coder} and $A_0 = a_0 = 0$, completing the proof.
\end{proof}

\subsection{Adaptive Variance Reduced A-CODER} 

Similar to A-CODER, VR-A-CODER can adaptively estimate the Lipschitz parameter by checking the quadratic bounds between $\vy_{s, }$ and $\vx_{s, k}$ as well as between $\vyt_s$ and $\vx_{s, k}$.  For completeness, we have included the adaptive version of VR-A-CODER in Algorithm~\ref{alg:vr-acc-coder-adaptive} below.

\begin{algorithm*}[t!]
\caption{Variance Reduced A-CODER (Adaptive Version)}\label{alg:vr-acc-coder-adaptive}
\begin{algorithmic}[1]
\STATE \textbf{Input:} $\vx_0\in\mathrm{dom}(g), \gamma \geq 0, L_0 > 0, m, \{\gS^1, \dots, \gS^m\}$
\STATE \textbf{Initialization:} $\vyt_0 = \vv_{1,0} = \vy_{1,0} = \vx_{1, 1} = \vx_{0}$; $\vz_{1, 0} = \mathbf{0}$
\STATE $L_1 = L_0 / 2$
\REPEAT
    \STATE $L_1 = 2 L_1$
    \STATE $a_0= A_0 = 0$; $A_1 = a_1 = \frac{1}{4 L_0}$
    \STATE $\vz_{1, 1} = \nabla f (\vx_0)$; $\vv_{1,1} = \mathrm{prox}_{a_1 g} (\vx_0 - \vz_{1, 1})$
\UNTIL{$f(\vv_{1, 1}) \le f(\vx_0) + \inner{\nabla f(\vx_0)}{\vv_{1, 1} - \vx_0} + \frac{L_1}{2} \norm{\vv_{1, 1} - \vx_0}^2$}
\STATE $\vyt_1 = \vy_{1, 1} = \vv_{1,1}$
\STATE $\vw_{1, 1, j} = (\vx_{1, 1}^{(1)}, \ldots, \vx_{1, 1}^{(j)}, \vy_{1, 1}^{(j+1)},\ldots, \vy_{1, 1}^{(m)})$
\STATE $\vv_{2,0} = \vv_{1,1}$; $\vw_{2, 0, j} = \vw_{1, 1, j}$; $\vx_{2, 0} = \vx_{1, 1}$; $\vy_{2, 0} = \vy_{1, 1}$; $\vz_{2, 0} = \vz_{1, 1}$
\FOR{$s= 2$ to $S$}
    \STATE $L_s = L_{s-1} / 2$
    \REPEAT
        \STATE $L_s = 2 L_s$
        \STATE Set $a_s > 0$ s.t. $a_s^2 = \frac{K A_{s-1} \pr{1 + A_{s-1} \gamma}}{8 L_s}$; $A_s = A_{s-1} + a_s$  
        \STATE $a_{s, 0} = a_{s-1}$; $a_{s,1}=a_{s,2}=\cdots =a_{s,K} = a_s$
        \STATE $\vv_{s, 0} = \vv_{s-1, K}$; $\vw_{s, 0, j} = \vw_{s-1, K, j}$; $\vx_{s, 0} = \vx_{s-1, K}$; $\vy_{s, 0} = \vy_{s-1, K}$; $\vz_{s, 0} = \vz_{s-1, K}$
        \STATE $\vmu_s = \nabla f(\vyt_{s-1})$
        \FOR{$k = 1$ to $K$} 
            \STATE $\vx_{s,k} = \frac{A_{s-1}}{A_s}\vyt_{s-1} + \frac{a_s}{A_s}\vv_{s,k-1}$
            \FOR{$j = m$ to $1$} %
                \STATE $\vw_{s, k, j} = (\vx_{s,k}^{(1)}, \ldots, \vx_{s,k}^{(j)}, \vy_{s,k}^{(j+1)},\ldots, \vy_{s,k}^{(m)})$
                \STATE Choose $t$ in $[n]$ uniformly at random
                \STATE $\tilde{\nabla}_{s,k}^{(j)} = \nabla^{(j)} f_t(\vw_{s, k, j}) - \nabla^{(j)} f_t(\vyt_{s-1}) + \vmu_s^{(j)}$
                \STATE $\vq_{s,k}^{(j)} = \tilde{\nabla}_{s,k}^{(j)} + \frac{a_{s,k-1}}{a_s}(\nabla^{(j)} f_t(\vx_{s,k-1}) - \nabla^{(j)} f_t(\vw_{s, k-1, j}))$
                \STATE $\vz_{s,k}^{(j)} = \vz_{s, k-1}^{(j)} + a_s \vq^{(j)}_{s,k}$
                \STATE $\vv_{s,k}^{(j)} = \mathrm{prox}_{(A_{s-1} + \frac{a_s k}{K}) g^j}(\vx_0^{(j)} - \vz_{s,k}^{(j)} / K)$
                \STATE $\vy_{s,k}^{(j)} = \frac{A_{s-1}}{A_s} \vyt_{s-1}^{(j)} + \frac{a_s}{A_s} \vv_{s,k}^{(j)}$
            \ENDFOR
        \ENDFOR
        \STATE $\vyt_s = \frac{1}{K}\sum_{k=1}^K \vy_{s,k}$
    \UNTIL{$f(\vy_{s, k}) \le f(\vx_{s, k}) + \inner{\nabla f(\vx_{s, k})}{\vy_{s, k} - \vx_{s, k}} + \frac{L_s}{2} \norm{\vy_{s, k} - \vx_{s, k}}^2$ \\
    and $\frac{1}{n} \sum_{t = 1}^n \norm{\nabla f_t (\vx_{s, k}) - \nabla f_t (\vyt_{s-1})}^2 \le 2 L_s (f(\vyt_{s-1}) - f(\vx_{s, k}) - \inner{\nabla f(\vx_{s, k})}{\vyt_{s-1} - \vx_{s, k}})$}
\ENDFOR
\STATE \textbf{return} $\vv_{S, K}, \vyt_S$
\end{algorithmic}	
\end{algorithm*}


\end{document}